\newcommand{\bC}{{\mathbb{C}}}
\newcommand{\bD}{{\mathbb{D}}}
\newcommand{\bF}{{\mathbb{F}}}
\newcommand{\bM}{{\mathbb{M}}}
\newcommand{\bN}{{\mathbb{N}}}
\newcommand{\bT}{{\mathbb{T}}}
  \newcommand{\A}{{\mathcal{A}}}
  \newcommand{\B}{{\mathcal{B}}}
  \newcommand{\D}{{\mathcal{D}}}
  \newcommand{\E}{{\mathcal{E}}}
  \newcommand{\F}{{\mathcal{F}}}
  \newcommand{\G}{{\mathcal{G}}}
\renewcommand{\H}{{\mathcal{H}}}
  \newcommand{\J}{{\mathcal{J}}}
  \newcommand{\K}{{\mathcal{K}}}  
  \newcommand{\M}{{\mathcal{M}}}
  \newcommand{\N}{{\mathcal{N}}}
\renewcommand{\O}{{\mathcal{O}}}
\renewcommand{\S}{{\mathcal{S}}}
  \newcommand{\U}{{\mathcal{U}}}
  \newcommand{\W}{{\mathcal{W}}}
  \newcommand{\X}{{\mathcal{X}}}
\newcommand{\fA}{{\mathfrak{A}}}
\newcommand{\fB}{{\mathfrak{B}}}
\newcommand{\fE}{{\mathfrak{E}}}
\newcommand{\fe}{{\mathfrak{e}}}
\newcommand{\fJ}{{\mathfrak{J}}}
\newcommand{\fK}{{\mathfrak{K}}}
\newcommand{\fN}{{\mathfrak{N}}}
\newcommand{\fR}{{\mathfrak{R}}}
\newcommand{\fS}{{\mathfrak{S}}}
\newcommand{\fs}{{\mathfrak{s}}}
\newcommand{\fT}{{\mathfrak{T}}}
\newcommand{\fz}{{\mathfrak{z}}}
\newcommand{\rC}{\mathrm{C}}
\newcommand{\eps}{\varepsilon}
\renewcommand{\phi}{\varphi}
\newcommand{\upchi}{{\raise.35ex\hbox{$\chi$}}}
\newcommand{\ol}{\overline}
\newcommand{\qand}{\quad\text{and}\quad}
\newcommand{\id}{\operatorname{id}}
\newcommand{\ran}{\operatorname{ran}}
\newcommand{\re}{\operatorname{Re}}
\newcommand{\spec}{\operatorname{spec}}
\newcommand{\Prim}{\operatorname{Prim}}
\newtheorem{lemma}{Lemma}[section]
\newtheorem{theorem}[lemma]{Theorem}
\newtheorem{proposition}[lemma]{Proposition}
\newtheorem{corollary}[lemma]{Corollary}
\theoremstyle{definition}
\newtheorem{question}{Question}
\newtheorem{example}{Example}
\date{\today}
\author{Rapha\"el Clou\^atre}
\address{Department of Mathematics, University of Manitoba, Winnipeg, Manitoba, Canada R3T 2N2}
\email{raphael.clouatre@umanitoba.ca\vspace{-2ex}}
\author{Ian Thompson}
\email{thompsoi@myumanitoba.ca\vspace{-2ex}}
\thanks{R.C. was partially supported by an NSERC Discovery Grant. I.T. was partially supported by a Manitoba Graduate Scholarship.}
\title[Finite-dimensionality in the NC Choquet boundary]{Finite-dimensionality in the non-commutative Choquet boundary: peaking phenomena and $\rC^*$-liminality}
\begin{document}
\begin{abstract}
We explore the finite-dimensional part of the non-commutative Choquet boundary of an operator algebra. In other words, we seek finite-dimensional boundary representations. Such representations may fail to exist even when the underlying operator algebra is finite-dimensional. Nevertheless, we exhibit mechanisms that detect when a given finite-dimensional representation lies in the Choquet boundary. Broadly speaking, our approach is topological and requires identifying isolated points in the spectrum of the $\rC^*$-envelope. This is accomplished by analyzing peaking representations and peaking projections, both of which being non-commutative versions of the classical notion of a peak point for a function algebra. We also connect this question with the residual finite-dimensionality of the $\rC^*$-envelope and to a stronger property that we call $\rC^*$-liminality. Recent developments in matrix convexity allow us to identify a pivotal intermediate property, whereby every matrix state is locally finite-dimensional.
\end{abstract}
\maketitle

\section{Introduction}\label{S:Introduction}
 
In unraveling the structure of $\rC^*$-algebras, a fruitful paradigm is to model these objects, insofar as possible, by finite-dimensional ones. The idea that finer structural properties of  a large class of $\rC^*$-algebras can be detected upon approximation by matrix algebras has become a major trend in the field. Nuclearity, a  notion at the center of recent capstone results in the classification program for some simple $\rC^*$-algebras (see \cite{GLN2015},\cite{TWW2017} and references therein), is an important example of an ``internal" finite-dimensional approximation property. Closely related to the theme of this paper is a different, ``external" finite-dimensional approximation property which we now describe. 

A $\rC^*$-algebra is said to be \emph{residually finite-dimensional} (or RFD) if it admits a separating set of finite-dimensional $*$-representations.  Roughly speaking, RFD $\rC^*$-algebras should be thought of as being diagonal with finite-dimensional blocks. In particular, they can be completely understood by examining their representations as matrices. Interestingly, various characterizations of this important property have emerged over the years  \cite{EL1992},\cite{archbold1995},\cite{hadwin2014},\cite{CS2019}.

In principle, similar ideas have the potential to unlock the structure of more general, possibly non-selfadjoint, operator algebras. This possibility was explored in \cite{mittal2010} for some concrete operator algebras of functions. In a related context, an analysis of finer forms of residual finite-dimensionality was performed recently in \cite{AHMR2020subh}. Predating this last work, a thorough investigation of non-selfadjoint residual finite-dimensionality was initiated in \cite{CR2018rfd}. Therein, consistently with the more familiar self-adjoint setting, an operator algebra is said to be RFD if it admits a completely norming collection of completely contractive homomorphisms into matrix algebras. Several basic  properties of such algebras were established, such as the non-obvious fact that finite-dimensional algebras are RFD. Furthermore an attempt was made to connect back with the more classical $\rC^*$-algebra setting, via the following procedure. 

Let $\A$ be a unital operator algebra, which we assume is concretely represented on some Hilbert space $\H$, so that $\A\subset B(\H)$. Then one can consider $\rC^*(\A)$, the $\rC^*$-algebra generated by $\A$. Assuming that $\A$ is RFD, it is then natural to ask whether this property is inherited by $\rC^*(\A)$. To make this discussion more efficient, the language of $\rC^*$-covers is useful. Recall that a \emph{$\rC^*$-cover} for $\A$ is a pair $(\fA,\iota)$, where $\fA$ is a $\rC^*$-algebra and $\iota:\A\to \fA$ is a completely isometric homomorphism such that $\fA=\rC^*(\iota(\A))$. If $\A$ is indeed RFD, then essentially by definition there is some $\rC^*$-cover $(\fA,\iota)$ such that $\fA$ is RFD. The question raised above only becomes interesting, then, if the $\rC^*$-cover is fixed in advance. Two natural candidates are the minimal and the maximal $\rC^*$-covers. 

In some special cases, it is known that the maximal $\rC^*$-cover of an RFD operator algebra does inherit the property of being RFD (see \cite[Section 5]{CR2018rfd}). As of this writing, it is still unknown whether this phenomenon always occurs; we will not address this problem here. The corresponding question for the minimal $\rC^*$-cover is the main driving force of the current paper, so we discuss it in more details.

Due to seminal work of Arveson \cite{arveson1969} and Hamana \cite{hamana1979}, any unital operator algebra admits an essentially unique minimal $\rC^*$-cover, which is called its \emph{$\rC^*$-envelope}. The question we are interested in then asks whether an RFD unital operator algebra always admits an RFD $\rC^*$-envelope.  As shown in \cite[Example 4]{CR2018rfd}, the answer is negative even for finite-dimensional operator algebras. Nevertheless, some special cases where the answer is affirmative were identified in that same paper.  The present paper can be viewed as a natural continuation, aiming to clarify this question further.

Our approach is predicated on Arveson's insight for constructing the $\rC^*$-envelope of a concretely represented unital operator algebra $\A\subset B(\H)$. By considering the $\rC^*$-envelope $\rC^*_e(\A)$ as being determined by  a non-commutative analogue of the Shilov boundary of a function algebra, Arveson's vision was that it could be constructed via a non-commutative analogue of the Choquet boundary. This original vision has now been fully realized thanks to non-trivial contributions from many researchers \cite{MS1998},\cite{dritschel2005},\cite{arveson2008},\cite{davidsonkennedy2015}. 

In Arveson's analogy, points are identified with characters in the classical commutative world,  and correspond to irreducible $*$-representations in the non-commu\-tative realm. Thus, the $\rC^*$-envelope is determined by the \emph{boundary representations} (see Subsection \ref{SS:bdryrep} for details.) Therefore, the residual finite-dimensionality of the $\rC^*$-envelope should be detectable through the lens of these boundary representations. For instance, an abundance of finite-dimensional boundary representations is known to force the $\rC^*$-envelope to be RFD. Conversely, the residual finite-dimensionality of the $\rC^*$-envelope implies that, in an appropriate sense, both the finite-dimensional irreducible $*$-representations and the boundary representations are dense. It is unclear, however, whether these two dense sets have any overlap;  even the existence of a single finite-dimensional boundary representation appears to be difficult to ascertain. Our main question is thus the following.

\begin{question}\label{Q:main}
Let $\A$ be a unital operator algebra with an RFD $\rC^*$-envelope. Must the envelope admit a finite-dimensional boundary representation for $\A$?
\end{question}

To give the reader a sense for why this question may be non-trivial, we mention that there are examples of \emph{finite-dimensional} unital operator algebras for which the $\rC^*$-envelope admits no finite-dimensional boundary representations (Example \ref{E:fdimnotNFD}). In fact, it can even happen that the $\rC^*$-envelope of a unital finite-dimensional operator algebra admits simply no finite-dimensional $*$-representations whatsoever (Example \ref{E:Cuntz}). This occurs despite the fact that finite-dimensional operator algebras satisfy a strong form a residual finite-dimensionality: they are in fact normed in finite dimensions \cite[Theorem 3.5]{CR2018rfd} (see Subsection \ref{SS:RFD} below for a definition). Consequently, if the answer to Question \ref{Q:main} is to be affirmative, it must be so for deeper reasons than the mere residual finite-dimensionality of $\A$. We also mention that a boundary representation being finite-dimensional is a much stronger notion than it being ``accessible" in the sense of \cite[Definition 6.32]{kriel2019}.

In view of this difficulty, a strategy for exhibiting finite-dimensional boundary representations must be formulated. We opt for the following topological approach to establish most our main results. As mentioned above, the set of (unitary equivalence classes of) boundary representations is dense in the spectrum of the $\rC^*$-envelope. Thus, if a finite-dimensional irreducible $*$-representation is known to be an isolated point, it would necessarily be a boundary representation.  

 In keeping with the paradigm suggested by Arveson that the $\rC^*$-envelope should be analyzed using non-commutative analogues of ideas from function theory, to realize the aforementioned strategy we view operator algebras as comprising non-commutative functions. Accordingly, we aim to  apply tools from what one could call non-commutative uniform algebra theory.  More precisely, we verify that certain finite-dimensional representations are indeed isolated points by showing that they are ``non-commutative peak points". Classically, given a uniform algebra $\A$ of continuous functions on a compact metric space $X$, a point $\xi\in X$ is said to be a \emph{peak point} for $\A$ if there is $\phi\in \A$ such that
 \[
 \phi(\xi)=1>|\phi(x)|, \quad x\in X, x\neq \xi.
 \]
A theorem of Bishop  implies that the set of peak points coincides with the Choquet boundary of $\A$, which is in turn dense in the Shilov boundary of $\A$ \cite[Section 8]{phelps2001}.

We will exploit two rather different non-commutative versions of peak points. The first one replaces points by irreducible $*$-representations of the $\rC^*$-envelope; this is based on the point of view that points in $X$ should be interpreted as characters. The second version identifies points with their characteristic functions, and thus replaces them by projections lying in the bidual. We will utilize both interpretations.
 
We now describe the organization of the paper and state our main original contributions more precisely.  

 Section \ref{S:prelim} collects various prerequisite material on operator algebras, and proves some elementary facts used throughout.
 
In Section \ref{S:ncpeakpoints}, we tackle Question \ref{Q:main}. Central to our approach and results is the following definition. Let $\A$ is a unital operator algebra and let $\pi$ be an irreducible $*$-representation of its $\rC^*$-envelope $\rC^*_e(\A)$. Let $n\in \bN$ and let  $T\in \bM_n(\rC^*_e(\A))$. Then, $T$ is said to \emph{peak} at $\pi$ if 
 \[
 \|\pi^{(n)}(T)\|>\|\sigma^{(n)}(T)\|
 \]
 for every irreducible $*$-representation $\sigma$ of $\rC^*_e(\A)$ which is not unitarily equivalent to $\pi$. In this case, we also say that $\pi$ is a \emph{peaking representation for $\A$}. 
 
 We introduce a more flexible, ``local" version of this notion. We show that a locally peaking representation for $\A$ must necessarily be a boundary representation (Theorem \ref{T:peakingrep}). It is thus highly desirable to be able to recognize locally peaking representations.  To do this, we leverage the theory of peaking projections developed in a series of papers starting with work of Hay \cite{hay2007},\cite{BHN2008},\cite{read2011},\cite{blecher2013},\cite{clouatre2018lochyp}. 
 
 Let $a\in \A$ be a contraction and consider the support projection $\fs_\pi$ of the irreducible $*$-representation $\pi$ (see Subsection \ref{SS:suppproj}). We say that $a$ \emph{peaks at $\fs_\pi$}  if $a\fs_\pi=\fs_\pi$ and $\|ap\|<1$ for every closed projection $p$ orthogonal to $\fs_\pi$. Upon specializing some known deep results to the separable setting, we show in Corollary \ref{C:peakOAchar} that this occurs if and only if $\fs_\pi$ lies in the weak-$*$ closure of $\A$ inside the bidual of $\rC^*_e(\A)$. We also relate this notion of peaking to the previous one (Theorem \ref{T:peakprojlocpeak}). We summarize these results.
 
 \begin{theorem}\label{T:mainA}
 Let $\A$ be a separable unital operator algebra and let $\pi$ be an irreducible finite-dimensional $*$-representation of $\rC^*_e(\A)$. If $\fs_\pi\in \A^{\perp\perp}\subset \rC^*_e(\A)^{**}$, then $\pi$ is a locally peaking representation for $\A$. In particular, $\pi$ is a boundary representation for $\A$. 
 \end{theorem}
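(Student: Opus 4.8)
The plan is to obtain the conclusion by chaining together the three auxiliary results quoted immediately above the statement, with the two standing hypotheses --- separability and finite-dimensionality of $\pi$ --- feeding into distinct links of the chain rather than into a single argument.

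First I would use Corollary \ref{C:peakOAchar} to reinterpret the hypothesis $\fs_\pi \in \A^{\perp\perp}$. Since $\A^{\perp\perp} \subset \rC^*_e(\A)^{**}$ is precisely the weak-$*$ closure of $\A$ in the bidual, that corollary (whose proof is where separability is needed) tells me that $\fs_\pi \in \A^{\perp\perp}$ is equivalent to the existence of a contraction $a \in \A$ that peaks at $\fs_\pi$, meaning $a\fs_\pi = \fs_\pi$ and $\|ap\| < 1$ for every closed projection $p$ orthogonal to $\fs_\pi$. Thus the hypothesis immediately furnishes such a peaking contraction $a$.

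Next I would convert this peaking-projection statement into the representation-theoretic notion via Theorem \ref{T:peakprojlocpeak}: the existence of a contraction $a \in \A$ that peaks at $\fs_\pi$ forces $\pi$ to be a locally peaking representation for $\A$. I expect this to be the step where finite-dimensionality of $\pi$ is genuinely used, since it is what makes $\fs_\pi$ a sufficiently well-behaved closed projection to translate the scalar norm estimate $\|ap\| < 1$ over closed $p \perp \fs_\pi$ into the matricial strict inequalities $\|\pi^{(n)}(T)\| > \|\sigma^{(n)}(T)\|$ for representations $\sigma$ near $\pi$ that underlie local peaking. Finally, once $\pi$ is known to be locally peaking, Theorem \ref{T:peakingrep} yields at once that $\pi$ is a boundary representation, giving the stated ``in particular'' clause.

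The substantive content therefore resides inside the cited results, and the proof here is an assembly. The only genuine care required is to check that the two hypotheses are deployed exactly where the cited statements demand them, and that the ``peaking at $\fs_\pi$'' condition produced by Corollary \ref{C:peakOAchar} is verbatim the hypothesis consumed by Theorem \ref{T:peakprojlocpeak}. The one place I would anticipate friction, were the chain to be unwound, is the passage in Theorem \ref{T:peakprojlocpeak} from a single scalar peaking contraction to the full family of matricial inequalities defining a locally peaking representation; but as that theorem may be assumed here, the argument reduces to a clean three-step citation.
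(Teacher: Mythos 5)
Your assembly is correct and is essentially the paper's own argument: Theorem~\ref{T:mainA} is stated there precisely as a summary of Theorem~\ref{T:peakprojlocpeak} (which yields local peaking) followed by Theorem~\ref{T:peakingrep} (which upgrades this to a boundary representation). The only quibble is that your first step via Corollary~\ref{C:peakOAchar} is redundant --- Theorem~\ref{T:peakprojlocpeak} already takes $\fs_\pi\in\A^{\perp\perp}$ (plus separability and finite-dimensionality of $\pi$) as its literal hypothesis, and internally it relies on the refined peaking element of Theorem~\ref{T:peakOA} rather than on an arbitrary contraction peaking at $\fs_\pi$, so the chain closes without that detour.
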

 
 We complement this theorem with some concrete examples where it applies (Example \ref{E:peaking}). For the rest of Section \ref{S:ncpeakpoints}, we turn to a certain uniform version of peaking representations called \emph{strongly peaking} representations; these have been considered by other authors \cite{arveson2011},\cite{NPSV2018},\cite{CDHLZ2019},\cite{DP2020} in different contexts. Motivated by the foregoing discussion, we are interested in strongly peaking representations  for $\A$. Interestingly, this requirement is in fact equivalent to the a priori weaker stipulation of being a strongly peaking representation for the larger algebra $\rC^*_e(\A)$. Furthermore, when $\rC^*_e(\A)$ is assumed to be RFD, strongly peaking representations are automatically finite-dimensional. The following is Corollary \ref{C:peakingRFDfd}.
  
 \begin{theorem}\label{T:mainB}
Let $\A$ be a unital operator algebra such that $\rC_e^*(\A)$ is RFD. Let $\pi$ be a strongly peaking $*$-representation for $\rC^*_e(\A)$. Then, $\pi$ is a finite-dimensional boundary representation for $\A$.
\end{theorem}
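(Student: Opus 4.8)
The plan is to verify two assertions separately: that $\pi$ is finite-dimensional, and that it is a boundary representation for $\A$. The second assertion requires no appeal to residual finite-dimensionality and follows from the machinery already in place. Indeed, if $T\in\bM_n(\rC^*_e(\A))$ witnesses the strong peaking of $\pi$, then in particular $\|\pi^{(n)}(T)\|>\|\sigma^{(n)}(T)\|$ for every irreducible $*$-representation $\sigma$ not unitarily equivalent to $\pi$, so $T$ peaks at $\pi$ and $\pi$ is a peaking representation for $\A$ in the sense defined above (the witnessing element lives in $\bM_n(\rC^*_e(\A))$, exactly as required). A peaking representation is a fortiori locally peaking, whence Theorem \ref{T:peakingrep} immediately yields that $\pi$ is a boundary representation for $\A$.

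It remains to establish finite-dimensionality, and here the RFD hypothesis enters decisively. Let $T$ and $n$ be as above, so that
\[
\|\pi^{(n)}(T)\|>\sup\big\{\|\sigma^{(n)}(T)\|:\sigma\in\Irr(\rC^*_e(\A)),\ \sigma\not\simeq\pi\big\}.
\]
Because the norm of any element of a $\rC^*$-algebra is the supremum of the norms of its images under all irreducible $*$-representations, and because $\pi$ dominates every inequivalent $\sigma$ by the displayed inequality, we obtain $\|T\|=\|\pi^{(n)}(T)\|$. On the other hand, the RFD property of $\rC^*_e(\A)$ furnishes a separating, and therefore isometric, family of finite-dimensional irreducible $*$-representations; amplifying this family shows $\|T\|=\sup\{\|\rho^{(n)}(T)\|:\rho \text{ finite-dimensional irreducible}\}$.

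Suppose toward a contradiction that $\pi$ is infinite-dimensional. Then every finite-dimensional irreducible $\rho$ is inequivalent to $\pi$, so each $\|\rho^{(n)}(T)\|$ is bounded above by the supremum appearing on the right-hand side of the displayed strict inequality. Taking the supremum over all such $\rho$ and invoking the RFD identity for $\|T\|$ gives $\|T\|\le\sup_{\sigma\not\simeq\pi}\|\sigma^{(n)}(T)\|<\|\pi^{(n)}(T)\|=\|T\|$, which is absurd. Hence $\pi$ is finite-dimensional. The crux of the argument --- and the reason the \emph{uniform} gap in the strongly peaking condition cannot be weakened to pointwise peaking --- is precisely this last step: were the inequality merely $\|\pi^{(n)}(T)\|>\|\sigma^{(n)}(T)\|$ for each individual $\sigma$, the finite-dimensional representations could approach $\|\pi^{(n)}(T)\|$ from below without attaining it, and no contradiction would arise.
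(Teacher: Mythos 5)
Your argument for finite-dimensionality is correct, and in fact it is cleaner than the paper's: the paper routes this step through Theorem \ref{T:peakingRFD}, whose proof uses Exel--Loring finite-dimensional approximations of $\pi$, whereas you only need the elementary facts that $\|T\|=\sup_{[\tau]}\|\tau^{(n)}(T)\|$ over all irreducible classes and that the RFD hypothesis makes the finite-dimensional irreducible classes separating, together with the observation that an infinite-dimensional $\pi$ is inequivalent to every finite-dimensional $\rho$. The uniform gap is indeed exactly what makes this work, as you point out.

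The boundary-representation half, however, has a genuine gap, and it sits precisely in your parenthetical ``the witnessing element lives in $\bM_n(\rC^*_e(\A))$, exactly as required.'' It is not as required. Theorem \ref{T:peakingrep} assumes that $\pi$ is a locally peaking representation \emph{for $\A$}, which by the definition in Section \ref{S:ncpeakpoints} means the witness must lie in $\bM_n(\A)$; its proof hinges on applying Theorem \ref{T:bdryexist} to an element $A\in\bM_n(\A)$ to produce a boundary representation \emph{for $\A$} attaining $\|A\|$, and that step collapses if $A$ merely lies in $\bM_n(\rC^*_e(\A))$. This is not a cosmetic issue: for a uniform algebra $\A\subset C(X)$ with $X$ a metrizable Shilov boundary, every point of $X$ is a peak point for $C(X)=\rC^*_e(\A)$, yet points outside the Choquet boundary of $\A$ do not give boundary representations for $\A$. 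So ``peaking for $\rC^*_e(\A)$'' with a witness in the $\rC^*$-algebra does not imply ``boundary representation for $\A$'' in general. What saves the day in the \emph{strongly} peaking case is Lemma \ref{L:unifpeakC*env}: the uniform gap forces $\bigoplus_{[\sigma]\in\U_\pi}\sigma$ to be non-injective, hence (via Lemma \ref{L:bdrysepequiv}) not completely norming for $\A$, which manufactures a new witness $A\in\bM_m(\A)$ peaking strongly at $\pi$. You must insert this transfer step before invoking Theorem \ref{T:peakingrep}; as written, your proof never produces a witness inside $\A$.
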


Much like for locally peaking representations, support projections can be used to guarantee that a $*$-representation is strongly peaking. Indeed, we show in Lemma \ref{L:clopenproj} that a finite-dimensional irreducible $*$-representation is strongly peaking if its support projection is both closed and open in the sense of Akemann's non-commutative topology (see Subsection \ref{SS:suppproj}). We also identify some sufficient conditions for this property to hold (Theorem \ref{T:specpeak}, Example \ref{E:specpeak}).

Finally, in Section \ref{S:C*lim}, in an attempt to better understand the subtleties inherent to Question \ref{Q:main}, we analyze unital operator algebras whose $\rC^*$-envelope are known to admit many finite-dimensional boundary representations. We start by obtaining a characterization of unital operator algebras admitting an RFD $\rC^*$-envelope; this is accomplished in Theorem \ref{T:ELUEP} with the aid of finite-dimensional approximations of representations, in the spirit of work of Exel--Loring \cite{EL1992}.  We then study unital operator algebras for which \emph{all} boundary representations are finite-dimensional. We call such operator algebras \emph{$\rC^*$-liminal}. The following is Corollary \ref{C:C*lim} and it summarizes our main results on this topic. Notably, the proof uses some recent developments in matrix convexity \cite{HL2019}.
 
 \begin{theorem}\label{T:mainB}
  Let $\A$ be a unital operator algebra. Consider the following statements.
\begin{enumerate}[{\rm (i)}]
%\item The algebra $\rC^*_e(\A)$ is liminal.
\item The algebra $\A$ is $\rC^*$-liminal.
\item Every matrix state of $\A$ can be dilated locally to a finite-dimensional $*$-repre\-sentation of $\rC_e^*(\A)$.
\item The algebra $\rC^*_e(\A)$ is RFD.
\end{enumerate}
Then, we have  {\rm (i)} $\Rightarrow$ {\rm (ii)} $\Rightarrow$ {\rm (iii)}.
 \end{theorem}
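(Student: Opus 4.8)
The plan is to establish the two implications separately, with the bulk of the conceptual work residing in the second one, which invokes the matrix convexity machinery of \cite{HL2019}.

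\emph{For the implication {\rm (i)} $\Rightarrow$ {\rm (ii)}.} Assume $\A$ is $\rC^*$-liminal, so that every boundary representation for $\A$ is finite-dimensional. Let $\phi$ be a matrix state of $\A$, say $\phi:\A\to \bM_n$. By Arveson's extension theorem, $\phi$ extends to a unital completely positive map on $\rC^*_e(\A)$, and by the boundary theorem (Arveson--Davidson--Kennedy) this extension admits a \emph{maximal} dilation, which is a $*$-representation $\rho$ of $\rC^*_e(\A)$ dilating $\phi$; moreover maximal dilations decompose into boundary representations. First I would reduce to the case where $\rho$ can be taken to be (a summand built from) a boundary representation $\pi$ compressing back to $\phi$ on the relevant subspace. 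Since $\A$ is $\rC^*$-liminal, every such $\pi$ is finite-dimensional. The word ``locally'' in (ii) is precisely what lets us avoid worrying about the global size of the dilation: one only needs a finite-dimensional $*$-representation that reproduces $\phi$ after compression to a finite-dimensional subspace, and a finite-dimensional boundary representation supplies exactly this. Thus the task is to package the finite-dimensionality of the boundary pieces into the local dilation statement, which is essentially formal once the dilation-theoretic vocabulary from Section \ref{S:prelim} is in place.

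\emph{For the implication {\rm (ii)} $\Rightarrow$ {\rm (iii)}.} This is where I expect the real content, and where the results of \cite{HL2019} enter. The goal is to show $\rC^*_e(\A)$ is RFD, i.e.\ to produce a separating family of finite-dimensional $*$-representations. The strategy is to test separation on matrix states: if (ii) holds, then for any matrix state $\phi$ of $\A$ there is a finite-dimensional $*$-representation of $\rC^*_e(\A)$ recovering $\phi$ locally. The key is that the matrix states of $\A$, organized as a matrix convex set (the \emph{matrix state space}), carry enough information to recover the norm on $\bM_k(\rC^*_e(\A))$ for every $k$. Using the recent developments in matrix convexity from \cite{HL2019}, one identifies extreme points of this matrix state space with (restrictions of) boundary representations and argues that the finite-dimensional local dilations furnished by (ii) suffice to completely norm $\rC^*_e(\A)$ in finite dimensions. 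Concretely, I would show that for each $T\in \bM_k(\rC^*_e(\A))$ and each $\eps>0$ there is a finite-dimensional $*$-representation $\sigma$ with $\|\sigma^{(k)}(T)\|>\|T\|-\eps$, by choosing a matrix state nearly attaining the norm at $T$ and applying local dilation; the separating family is then assembled from these.

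The main obstacle will be the passage from a single \emph{local} finite-dimensional dilation to a genuinely separating (completely norming) family. The difficulty is that (ii) only guarantees finite-dimensionality locally, state by state, with no uniform control on dimension, so one must carefully marshal the matrix convexity structure to ensure that the norm of an arbitrary matrix-valued element $T\in\bM_k(\rC^*_e(\A))$ is approached by these local representations across all $k$ simultaneously. Here the correct tool is the duality between matrix convex sets and operator systems developed in \cite{HL2019}: one translates the norm-attainment problem into a statement about the matrix state space and uses the characterization of the $\rC^*$-envelope's norm via its boundary to close the argument. Once that dictionary is invoked, the remaining steps are routine approximations, and the implication follows.
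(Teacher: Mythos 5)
Your proposal misplaces where the real work lies, and both implications as sketched have genuine gaps.

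For (i) $\Rightarrow$ (ii): your plan is to dilate a matrix state $\phi$ maximally and observe that the resulting boundary representations are finite-dimensional. This only works when $\phi$ is \emph{pure}: it is pure UCP maps that dilate to (single) boundary representations via \cite[Theorem 2.4]{davidsonkennedy2015}. A general matrix state dilates to a representation with the unique extension property, which may be an infinite direct sum of finite-dimensional boundary representations (or worse), and there is no obvious way to cut such a dilation down to a single finite-dimensional $*$-representation reproducing $\phi$ on a prescribed finite-dimensional subspace $\N\subset\A$ --- which is what the definition of ``locally finite-dimensional'' demands. This reduction from arbitrary matrix states to finitely many pure ones is precisely the role of the matrix convexity input from \cite{HL2019} in the paper's Theorem \ref{T:fdimbdryfdimstates}: restricting to the finite-dimensional operator system $\fS$ generated by $\N$, one writes the (unique UCP extension of the) matrix state as a \emph{finite} matrix convex combination of matrix extreme points, identifies those with pure matrix states via \cite[Theorem B]{farenick2000}, dilates each to a finite-dimensional boundary representation, and assembles the finite combination into one finite-dimensional dilation. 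Calling this step ``essentially formal'' overlooks the entire content of the implication; in particular, \cite{HL2019} is used here, not in (ii) $\Rightarrow$ (iii).

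For (ii) $\Rightarrow$ (iii): your strategy of choosing a matrix state ``nearly attaining the norm at $T$'' for $T\in\bM_k(\rC^*_e(\A))$ does not connect to the hypothesis, because (ii) only concerns matrix states \emph{of $\A$}, which see elements of $\A$ (or the operator system it generates), not general elements of $\rC^*_e(\A)$. The missing idea is the unique extension property: the paper (Theorem \ref{T:locfdim}) takes a boundary representation $\beta$, compresses it to finite-dimensional subspaces $\X\subset\H_\beta$ and restricts to finite-dimensional subspaces $\N\subset\A$ to obtain matrix states of $\A$; hypothesis (ii) dilates each of these to a finite-dimensional $*$-representation; a pointwise weak-$*$ cluster point of the resulting net is a UCP map agreeing with $\beta$ on $\A$, and the unique extension property upgrades this agreement to all of $\rC^*_e(\A)$, yielding a finite-dimensional approximation of $\beta$ and hence (by Theorem \ref{T:ELUEP}) residual finite-dimensionality. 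Without the UEP step there is no mechanism to transfer norm control from $\A$ to $\rC^*_e(\A)$, and the ``duality between matrix convex sets and operator systems'' you invoke does not supply one.
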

 
While we know that  {\rm (iii)} $\not \Rightarrow$ {\rm (ii)}, we do not know if the other implication can be reversed.

%%%%%%%%%%%%%%%%%%%%%%%%%%%%%%%%%%%%%%%%%%%%%%%%%%%%%%%%%%%%%%%%%%%%%%%%%%%
%%%%%%%%%%%%%%%%%%%%%%%%%%%%%%%%%%%%%%%%%%%%%%%%%%%%%%%%%%%%%%%%%%%%%%%%%%%
%%%%%%%%%%%%%%%%%%%%%%%%%%%%%%%%%%%%%%%%%%%%%%%%%%%%%%%%%%%%%%%%%%%%%%%%%%%
%%%%%%%%%%%%%%%%%%%%%%%%%%%%%%%%%%%%%%%%%%%%%%%%%%%%%%%%%%%%%%%%%%%%%%%%%%%
%%%%%%%%%%%%%%%%%%%%%%%%%%%%%%%%%%%%%%%%%%%%%%%%%%%%%%%%%%%%

\section{Operator algebraic preliminaries}\label{S:prelim}

\subsection{Representations of $\rC^*$-algebras}\label{SS:repC*}
The focal point of this paper is the existence of certain $*$-representations of $\rC^*$-algebras. In this subsection, we briefly recall various equivalence relations and topologies related to these objects. The reader should consult  \cite[Chapter 3]{dixmier1977} for greater detail.

Let $\fA$ be a $\rC^*$-algebra. Recall that a closed two-sided ideal $\fJ\subset \fA$ is said to be \emph{primitive} if it is the kernel of an irreducible $*$-representation. The collection of primitive ideals of $\fA$, denoted by $\Prim(\fA)$, is known as its \emph{primitive ideal space}. It can be endowed with the Jacobson topology. Given a subset $\S\subset \Prim(\fA)$, the closure of $\S$ in this topology is the set of primitive ideals of $\fA$ containing $\bigcap_{\fs\in \S}\fs$. It follows that $\S$ is dense in $\Prim(\fA)$ if and only if $\cap_{\fs\in \S}\fs=\{0\}$. If for each $\fs\in \S$ we choose an irreducible $*$-representation $\pi_\fs$ of $\fA$ with $\fs=\ker \pi_\fs$, then we see that $\S$ is dense if and only if $\bigoplus_{\fs\in \S}\pi_\fs$ is injective.

Given two irreducible $*$-representations $\pi$ and $\sigma$ of $\fA$, we say that $\sigma$ is \emph{weakly contained} in $\pi$ and write $\sigma \prec\pi$ 
if $\ker \pi\subset \ker\sigma$. This is equivalent to the existence of a $*$-homomorphism $\theta:\pi(\fA)\to \sigma(\fA)$ such that $\theta\circ \pi=\sigma$. We say that $\pi$ and $\sigma$ are \emph{weakly equivalent} and write $\pi\sim \sigma$ if they are weakly contained in one another. 
It is well known that finite-dimensional irreducible $*$-representations are minimal in the partial order given by weak containment and in fact display significant rigidity.

\begin{lemma}\label{L:fdimweakcontain}
Let $\pi$ and $\sigma$ be two irreducible $*$-representations of  a $\rC^*$-algebra $\fA$. Assume that $\pi$ is finite-dimensional and that $\sigma \prec \pi$. Then, $\sigma$ is finite-dimensional and unitarily equivalent to $\pi$.
\end{lemma}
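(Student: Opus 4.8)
The plan is to combine the structure theory of finite-dimensional irreducible representations with the reformulation of weak containment already recorded above. First I would pin down $\pi(\fA)$. Since $\pi$ is irreducible on the finite-dimensional space $\H_\pi$, Schur's lemma gives $\pi(\fA)'=\bC I$, and the bicommutant theorem yields $\pi(\fA)''=B(\H_\pi)$. Because $\H_\pi$ is finite-dimensional, the $*$-subalgebra $\pi(\fA)$ is automatically closed, so $\pi(\fA)=\pi(\fA)''=B(\H_\pi)\cong \bM_d$ with $d=\dim\H_\pi$.

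Next, the hypothesis $\sigma\prec\pi$ means $\ker\pi\subset\ker\sigma$, so the equivalent formulation stated in the excerpt provides a $*$-homomorphism $\theta\colon\pi(\fA)\to\sigma(\fA)$ with $\theta\circ\pi=\sigma$. As $\sigma$ is irreducible it is nonzero, hence $\theta$ is a nonzero $*$-homomorphism out of the simple algebra $\bM_d$; its kernel is a proper closed two-sided ideal of $\bM_d$, so $\ker\theta=\{0\}$ and $\theta$ is an isometric $*$-isomorphism onto $\sigma(\fA)$. In particular $\sigma(\fA)\cong\bM_d$ is finite-dimensional.

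The step that carries the real content is upgrading finite-dimensionality of the \emph{image} $\sigma(\fA)$ to finite-dimensionality of the \emph{representation space} $\H_\sigma$, and this is exactly where irreducibility of $\sigma$ is indispensable rather than cosmetic. I would argue that a finite-dimensional $*$-subalgebra of $B(\H_\sigma)$ is closed in the weak operator topology, so, $\sigma$ being irreducible and hence nondegenerate, $\sigma(\fA)=\sigma(\fA)''=\bC I'=B(\H_\sigma)$. Matching the (algebra) dimensions $d^2$ on both sides forces $\dim\H_\sigma=d$, so $\sigma$ is finite-dimensional.

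Finally, $\theta$ is now a $*$-isomorphism $B(\H_\pi)\to B(\H_\sigma)$ between two full matrix algebras on spaces of equal dimension. Such an isomorphism is spatially implemented: there is a unitary $U\colon\H_\pi\to\H_\sigma$ with $\theta(T)=UTU^*$. Consequently $\sigma(a)=\theta(\pi(a))=U\pi(a)U^*$ for every $a\in\fA$, which is precisely the assertion that $\sigma$ and $\pi$ are unitarily equivalent, completing the argument. The only delicate point is the passage from the image algebra to the representation space in the third paragraph; the rest is the rigidity of $\bM_d$.
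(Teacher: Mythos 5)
Your proof is correct and follows essentially the same route as the paper's: obtain the $*$-homomorphism $\theta$ from weak containment, use simplicity of $\pi(\fA)\cong\bM_d$ to see $\theta$ is a $*$-isomorphism onto $\sigma(\fA)$, and conclude that $\theta$ is spatially implemented. The paper states these steps tersely; you have simply filled in the details (double commutant, WOT-closedness of finite-dimensional subalgebras, the dimension count), all of which are accurate.
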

\begin{proof}
By assumption, there is a $*$-homomorphism $\theta:\pi(\fA)\to \sigma(\fA)$ such that $\theta\circ \pi=\sigma$. Now, since $\pi$ is irreducible and finite-dimensional, we see that $\pi(\fA)$ is a simple $\rC^*$-algebra, so $\theta$ is a $*$-isomorphism and $\sigma(\fA)$ is a finite-dimensional simple $\rC^*$-algebra. This forces $\theta$ to be implemented by some some unitary equivalence.
\end{proof}

The \emph{spectrum} of $\fA$, denoted by $\widehat\fA$, is the set of unitary equivalence classes of irreducible $*$-representations. Given an irreducible $*$-representation $\pi$, we let $[\pi]\in \widehat\fA$ denote its unitary equivalence class. We can define a topology on $\widehat\fA$ by requiring the natural map
\[
\widehat \fA\to \Prim \fA, \quad [\pi]\mapsto \ker \pi
\]
to be continuous. It follows from this that the closure of the singleton $[\pi]$ in $\widehat\fA$ is the set
\[
\{[\sigma]\in \widehat{\fA}:\sigma\prec \pi\}.
\]

If $\fA$ is a unital $\rC^*$-algebra, then $\S(\fA)$ denotes the state space. The \emph{pure} states are the extreme points of the convex set $\S(\fA)$. Given $\psi\in \S(\fA)$, we let $(\sigma_\psi,\H_\psi,\xi_\psi)$ be the corresponding \emph{GNS representation}. That is to say, $\H_\psi$ is a Hilbert space and $\sigma_\psi:\fA\to B(\H_\psi)$ is a $*$-representation with cyclic vector $\xi_\psi$ such that
\[
\psi(t)=\langle \sigma_\psi(t)\xi_\psi,\xi_\psi \rangle, \quad t\in \fA.
\]
The state $\psi$ is pure if and only if $\sigma_\psi$ is irreducible \cite[Proposition 2.5.4]{dixmier1977}.  If $\Delta\subset \widehat{\fA}$, then we let $\S_\Delta(\fA)$ denote the set of (necessarily pure) states $\psi$ with the property that $[\sigma_\psi]\in \Delta$. This is readily seen to coincide with the collection of states of the form
 \[
 t\mapsto \langle \pi(t)\eta,\eta \rangle, \quad t\in \fA
 \]
 for some irreducible $*$-representation $\pi:\fA\to B(\H_\pi)$ with $[\pi]\in \Delta$ and some unit vector $\eta\in \H_\pi$.

\begin{lemma}{\cite[Theorem 3.4.10]{dixmier1977}}\label{L:densestaterep}
Let $\fA$ be a unital $\rC^*$-algebra and let $\Delta\subset \widehat{\fA}$ be a subset. Then, $\Delta$ is dense in $\widehat{\fA}$ if and only if $\S_\Delta(\fA)$ is weak-$*$ dense in the set of pure states.
\end{lemma}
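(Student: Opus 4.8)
The plan is to prove Lemma \ref{L:densestaterep}, which asserts that $\Delta \subset \widehat{\fA}$ is dense if and only if $\S_\Delta(\fA)$ is weak-$*$ dense in the pure state space. Although the statement is cited from \cite[Theorem 3.4.10]{dixmier1977}, I will reconstruct the argument from the two topological facts already recorded in the excerpt: that $\Delta$ is dense in $\widehat{\fA}$ if and only if $\bigcap_{[\pi] \in \Delta} \ker \pi = \{0\}$ (equivalently $\bigoplus_{[\pi] \in \Delta} \pi$ is injective), and that $\S_\Delta(\fA)$ is precisely the set of vector states $t \mapsto \langle \pi(t)\eta, \eta\rangle$ coming from representations in $\Delta$.

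First I would establish the easy direction. Suppose $\Delta$ is \emph{not} dense in $\widehat{\fA}$. Then the closed two-sided ideal $\fJ = \bigcap_{[\pi] \in \Delta} \ker \pi$ is nonzero, so there is a self-adjoint element $a \in \fJ$ with $a \neq 0$. Every state in $\S_\Delta(\fA)$ annihilates $\fJ$, hence vanishes at $a$. On the other hand, since the pure states separate points of $\fA$, there is a pure state $\psi_0$ with $\psi_0(a) \neq 0$. The functional $\psi \mapsto \psi(a)$ is weak-$*$ continuous, so the weak-$*$ closure of $\S_\Delta(\fA)$ is contained in the set $\{\psi : \psi(a) = 0\}$, which excludes $\psi_0$. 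Therefore $\S_\Delta(\fA)$ fails to be weak-$*$ dense in the pure states.

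For the converse I would argue contrapositively. Assume $\S_\Delta(\fA)$ is not weak-$*$ dense in the pure state space; let $K$ denote its weak-$*$ closure, a proper weak-$*$ closed subset. The plan is to produce a nonzero element killed by all of $\Delta$. Since $\S(\fA)$ is weak-$*$ compact convex and the pure states are its extreme points, one works with $\ol{\conv}\,\S_\Delta(\fA)$; by Krein--Milman and a separation argument there is a self-adjoint $a \in \fA$ and a pure state $\psi_0 \notin K$ with $\psi_0(a) > \sup_{\psi \in K} \psi(a)$. After rescaling and shifting one can arrange that $\psi(a) \le 0$ for all $\psi \in \S_\Delta(\fA)$ while $\psi_0(a) > 0$. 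The key step is to upgrade the inequality $\psi(a) \le 0$, valid for all vector states of representations $[\pi] \in \Delta$, into the statement that the positive part $a_+$ lies in $\bigcap_{[\pi] \in \Delta} \ker \pi$: indeed $\langle \pi(a)\eta, \eta\rangle \le 0$ for every unit vector $\eta$ forces $\pi(a) \le 0$, so $\pi(a_+) = (\pi(a))_+ = 0$ for every $[\pi] \in \Delta$. Since $\psi_0(a) > 0$ gives $a_+ \neq 0$, the intersection $\bigcap_{[\pi]\in\Delta}\ker\pi$ is nonzero, whence $\Delta$ is not dense by the criterion recorded above.

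The main obstacle I anticipate is the separation/extreme-point step in the converse: extracting from the mere failure of density a single self-adjoint element witnessing a sign condition on \emph{all} of $\S_\Delta(\fA)$ while being positive on a pure state outside the closure. This requires care because the natural separating functional produced by Hahn--Banach lives on the state space, and one must ensure the separating element can be taken self-adjoint and that the strict inequality survives at an actual pure state rather than merely on the convex hull. Once that geometric input is secured, the passage from the scalar inequality $\langle \pi(a)\eta,\eta\rangle \le 0$ to the operator inequality $\pi(a) \le 0$, and then to $\pi(a_+)=0$ via continuous functional calculus, is routine and is where the finite-dimensionality rigidity plays no role — the argument is purely $\rC^*$-algebraic and applies to all irreducible representations uniformly.
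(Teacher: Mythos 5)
Your argument is correct, and it is necessarily a different route from the paper's, because the paper offers no proof at all: Lemma \ref{L:densestaterep} is simply cited as \cite[Theorem 3.4.10]{dixmier1977}, where it is deduced from the fact that the canonical map from the pure state space onto $\widehat{\fA}$, $\psi\mapsto[\sigma_\psi]$, is a continuous \emph{open} surjection (density then transfers back and forth along such a map). Your proof instead runs entirely through duality in $\fA^*$ and is self-contained modulo standard convexity theory. The one step you flag as the ``main obstacle'' is genuinely the only place needing an extra ingredient, and it is supplied by Milman's partial converse to the Krein--Milman theorem: let $C=\ol{\conv}(\S_\Delta(\fA))$, a weak-$*$ compact convex subset of $\S(\fA)$. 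If $C$ contained every pure state, then $C=\S(\fA)$ by Krein--Milman, and Milman's theorem would force $\ext(\S(\fA))\subset \ol{\S_\Delta(\fA)}$, i.e.\ $\S_\Delta(\fA)$ would be weak-$*$ dense in the pure states, contradicting your hypothesis. Hence some pure state $\psi_0$ lies outside the compact convex set $C$, and Hahn--Banach separation in the weak-$*$ topology (whose dual is $\fA$) produces the separating element; passing to $(a+a^*)/2$ and subtracting $\bigl(\sup_{\psi\in C}\psi(a)\bigr)1$ uses only that states are hermitian and unital. The remainder of your argument --- upgrading $\langle\pi(a)\eta,\eta\rangle\le 0$ for all unit vectors to $\pi(a)\le 0$, hence $\pi(a_+)=(\pi(a))_+=0$, while $\psi_0(a)>0$ forces $a_+\neq 0$ --- is sound, as is the easy direction. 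What your approach buys is an elementary, quantitative witness (the element $a_+$ killed by all of $\Delta$) without invoking the openness of the spectrum map, which is the less obvious half of Dixmier's machinery; what it costs is the appeal to Milman's theorem, which you should cite explicitly rather than folding into ``Krein--Milman and a separation argument.''
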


\subsection{Residually finite-dimensional operator algebras}\label{SS:RFD}
Let $\A$ be an operator algebra. For a positive integer $n$, we let $\bM_n$ denote the $n\times n$ complex matrices. Likewise, $\bM_n(\A)$ denotes the algebra of $n\times n$ matrices with entries in $\A$.  By a \emph{representation} of $\A$, we will always mean a completely contractive homomorphism $\pi:\A\to B(\H_\pi)$ for some Hilbert space $\H_\pi$. For each $n\in \bN$, we let $\pi^{(n)}:\bM_n(\A)\to B(\H^{(n)})$ denote the natural ampliation of $\pi$.

Let $\S$ be a set of representations of $\A$ and let $\M\subset \A$ be a subspace. Then, $\S$ is said to be 
\begin{enumerate}[{\rm (a)}]
\item  \emph{separating for $\M$} if $\bigoplus_{\pi\in \S}\pi$ is injective on $\M$;

\item  \emph{completely norming} for $\M$ if $\bigoplus_{\pi\in \S}\pi$ is completely isometric on $\M$.
%\item  \emph{strictly norming} for $\M$ if for every positive integer $n$ and every $A\in \bM_n(\M)$, there is $\pi\in \S$ such that
%\[
%\|A\|= \|\pi^{(n)}(A)\|.
%\]
\end{enumerate}
Note that if $\M$ is a $\rC^*$-algebra, then $\S$ is separating for $\M$ if and only if it is completely norming for $\M$.

We say that $\A$ is  \emph{residually finite-dimensional} (RFD) if there  is a set of representations of $\A$ on finite-dimensional Hilbert spaces that is completely norming for $\A$. 
We wish to record some known characterizations of RFD $\rC^*$-algebras. For this purpose, we introduce some terminology.
Let $\fA$ be a $\rC^*$-algebra and let $\pi:\fA\to B(\H_\pi)$ be a $*$-representation. A net of $*$-representations
\[
\pi_\lambda:\fA\to B(\H_\pi),\quad \lambda\in \Lambda
\]
is said to be an \emph{approximation} for $\pi$ if 
\[
\lim_\lambda \pi_\lambda(a)\xi=\pi(a)\xi, \quad a\in \fA, \xi\in \H_\pi. 
\]
Here, the limit is taken with respect to the norm topology on $\H_\pi$.
If in addition the space $\pi_\lambda(\fA)\H_\pi$ is finite-dimensional for each $\lambda\in \Lambda$, then the net $(\pi_\lambda)$ is said to be a \emph{finite-dimensional approximation} for $\pi$.  Note that in this case, because $\pi_\lambda(\fA)\H_\pi$ is finite-dimensional, it follows that $\pi_\lambda(\fA)$ is a finite-dimensional $\rC^*$-algebra for every $\lambda\in \Lambda$.

\begin{theorem}\label{T:EL}
Let $\fA$ be a $\rC^*$-algebra. Then, the following statements are equivalent. 
\begin{enumerate}[{\rm (i)}]
\item The algebra $\fA$ is RFD.
\item Every $*$-representation of $\fA$ admits a finite-dimensional approximation.
\item Every irreducible $*$-representation of $\fA$ admits a finite-dimensional approximation.
\item The set of unitary equivalence classes of finite-dimensional irreducible $*$-rep\-resentations is dense in $\widehat{\fA}$.
\end{enumerate}
\end{theorem}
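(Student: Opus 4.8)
The plan is to establish the cycle of implications (i) $\Rightarrow$ (ii) $\Rightarrow$ (iii) $\Rightarrow$ (iv) $\Rightarrow$ (i). Since adjoining a unit affects none of the four conditions, I will assume throughout that $\fA$ is unital, so that the state space is weak-$*$ compact and Lemma \ref{L:densestaterep} applies. The implication (ii) $\Rightarrow$ (iii) is immediate, because every irreducible $*$-representation is in particular a $*$-representation.

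For (iv) $\Rightarrow$ (i), let $\Delta\subset\widehat\fA$ denote the set of classes of finite-dimensional irreducible $*$-representations. Since the topology on $\widehat\fA$ is the one making the map $[\pi]\mapsto\ker\pi$ continuous, density of $\Delta$ forces $\{\ker\pi:[\pi]\in\Delta\}$ to be dense in $\Prim\fA$, which by the discussion in Subsection \ref{SS:repC*} means $\bigcap_{[\pi]\in\Delta}\ker\pi=\{0\}$. Hence $\bigoplus_{[\pi]\in\Delta}\pi$ is faithful, thus completely isometric; being a family of finite-dimensional representations, it witnesses that $\fA$ is RFD. For (iii) $\Rightarrow$ (iv), I would invoke Lemma \ref{L:densestaterep}, so that it suffices to show $\S_\Delta(\fA)$ is weak-$*$ dense in the pure states. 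Let $\psi$ be a pure state with GNS triple $(\sigma_\psi,\H_\psi,\xi_\psi)$, so $\sigma_\psi$ is irreducible, and let $(\sigma_\lambda)$ be the finite-dimensional approximation provided by (iii). The states $\psi_\lambda:=\langle\sigma_\lambda(\cdot)\xi_\psi,\xi_\psi\rangle$ converge weak-$*$ to $\psi$; since each $\sigma_\lambda(\fA)$ is finite-dimensional, $\sigma_\lambda$ splits as a finite direct sum of finite-dimensional irreducible $*$-representations of $\fA$, and a short computation exhibits $\psi_\lambda$ as a convex combination of states in $\S_\Delta(\fA)$. Thus $\psi\in\wsclos{\conv \S_\Delta(\fA)}$; as $\psi$ is extreme in the state space it is extreme in this smaller convex set, and Milman's partial converse to the Krein--Milman theorem yields $\psi\in\wsclos{\S_\Delta(\fA)}$.

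The crux is (i) $\Rightarrow$ (ii). Fix a faithful representation $\rho=\bigoplus_i\rho_i$ with each $\H_{\rho_i}$ finite-dimensional, furnished by RFD. A Hahn--Banach/order argument (if $\rho(a)\le tI$ then $a\le t$ by faithfulness) shows that the weak-$*$ closed convex hull of the vector states of $\rho$ is all of $\S(\fA)$; restricting to finitely supported vectors and passing to convex combinations, one sees that the states with finite-dimensional GNS representation are weak-$*$ dense in $\S(\fA)$. I would then reduce to the cyclic case: an arbitrary $\pi$ is a direct sum of cyclic representations, and a finite-dimensional approximation of $\pi$ can be assembled from approximations of the cyclic summands, since the defining conditions of such a net constrain only finitely many vectors and algebra elements at a time. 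Finally, for a genuinely infinite-dimensional cyclic $\pi=\sigma_\psi$, I would choose states $\psi_\lambda\to\psi$ with finite-dimensional GNS triples $(\sigma_{\psi_\lambda},\H_{\psi_\lambda},\xi_\lambda)$. For a finite test set $B\subset\fA$ the Gram matrices $(\psi_\lambda(b^*b'))_{b,b'\in B}$ converge to $(\psi(b^*b'))_{b,b'\in B}$, so one can transport $\sigma_{\psi_\lambda}$ onto $\H_\psi$ through an isometry $J_\lambda:\H_{\psi_\lambda}\to\H_\psi$ (which exists because $\dim\H_\psi=\infty$) chosen so that $J_\lambda\sigma_{\psi_\lambda}(b)\xi_\lambda$ approximates $\sigma_\psi(b)\xi_\psi$. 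Then $\pi_\lambda:=J_\lambda\sigma_{\psi_\lambda}(\cdot)J_\lambda^*$ is a finite-dimensional $*$-representation on $\H_\psi$, multiplicativity being guaranteed by $J_\lambda^*J_\lambda=I$, and it approximates $\sigma_\psi$ on the dense set $\sigma_\psi(\fA)\xi_\psi$.

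This transport construction, in the spirit of Exel--Loring, is the technical heart of the theorem, and I expect the main obstacle to lie precisely here: one must organize the $\eps$--$\delta$ bookkeeping — the passage from convergence of Gram matrices to an honest intertwining isometry, and the simultaneous control over a growing finite test set — so that the resulting net genuinely qualifies as a finite-dimensional approximation in the stated sense, while the three remaining implications are comparatively soft consequences of the primitive-ideal description of density, Lemma \ref{L:densestaterep}, and the Krein--Milman/Milman circle of ideas.
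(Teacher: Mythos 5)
Your proposal is correct in outline, but it takes a substantially more self-contained route than the paper does. The paper's proof is short: the equivalence (i) $\Leftrightarrow$ (ii) is simply cited as \cite[Theorem 2.4]{EL1992}, the implication (iii) $\Rightarrow$ (i) is obtained directly by upgrading a separating family of irreducible $*$-representations to a separating family of finite-dimensional ones (separating being automatically completely norming for a $\rC^*$-algebra), and (i) $\Leftrightarrow$ (iv) is read off from the definition of the topology on $\widehat{\fA}$, exactly as in your (iv) $\Rightarrow$ (i). You differ in two places: you reprove the Exel--Loring implication (i) $\Rightarrow$ (ii) rather than citing it, and you route (iii) through (iv) using GNS decompositions, convexity and Milman's partial converse, where the paper goes straight from (iii) to (i) in two lines. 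Your sketch of (i) $\Rightarrow$ (ii) is a faithful outline of the actual Exel--Loring argument (weak-$*$ density of states with finite-dimensional GNS via Hahn--Banach, reduction to the cyclic case, transport by isometries), and your (iii) $\Rightarrow$ (iv) is correct but heavier than necessary; what the extra work buys you is independence from the reference. If you write this out in full, two points need care. First, in (iii) $\Rightarrow$ (iv) the approximants $\sigma_\lambda$ need not be unital, so $\langle\sigma_\lambda(\cdot)\xi_\psi,\xi_\psi\rangle$ is only approximately a state and must be normalized (harmless, since $\sigma_\lambda(1)\xi_\psi\to\xi_\psi$). Second, in the reduction of (i) $\Rightarrow$ (ii) to cyclic summands, the naive direct sum of the approximants over infinitely many summands has infinite-dimensional essential subspace; for each finite test set one must truncate to the finitely many summands that approximately carry the test vectors and declare the representation zero elsewhere. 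That truncation, together with the perturbation of the approximately isometric Gram-matrix map into an honest isometry $J_\lambda$, is where the bookkeeping you defer actually lives; neither step fails, but neither is automatic.
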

\begin{proof}
(i) $\Leftrightarrow$ (ii): This is \cite[Theorem 2.4]{EL1992}.

(ii) $\Rightarrow$ (iii): This is trivial.

(iii) $\Rightarrow$ (i): There is always a set of irreducible $*$-representations of $\fA$ which is separating for $\fA$. It thus follows that there is a set of finite-dimensional $*$-representations of $\fA$ which is also separating for $\fA$, whence $\fA$ is RFD.

(i) $\Leftrightarrow$ (iv): By definition of the topologies involved, we see that the set of unitary equivalence classes of finite-dimensional irreducible $*$-representations of $\fA$ is dense in $\widehat\fA$ if and only if there is a set of finite-dimensional irreducible $*$-representations of $\fA$ which is separating for $\fA$. The latter is clearly equivalent to $\fA$ being RFD.
\end{proof}

Another useful characterization that we require is the following.

\begin{theorem}\label{T:CS}
Let $\fA$ be a $\rC^*$-algebra. Let $\fN\subset \fA$ be the subset consisting of those elements $a\in \fA$ for which there is a finite-dimensional $*$-representation $\pi$ of $\fA$ such that $\|a\|=\|\pi(a)\|$. Then, the following statements hold.
\begin{enumerate}[{\rm (i)}]
\item The algebra $\fA$ is RFD if and only if $\fN$ is dense in $\fA$.
\item All irreducible $*$-representations of $\fA$ are finite-dimensional if and only if $\fN=\fA$. 
\end{enumerate}
\end{theorem}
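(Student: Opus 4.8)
The plan is to prove both equivalences by exploiting the elementary observation that for any element $a\in\fA$ and any $*$-representation $\pi$, the norm $\|\pi(a)\|$ can be computed through irreducible $*$-representations weakly contained in $\pi$, combined with the structural rigidity of finite-dimensional irreducible $*$-representations recorded in Lemma \ref{L:fdimweakcontain}. The key point throughout is that $\|a\|=\sup_{[\pi]\in\widehat\fA}\|\pi(a)\|$, since a separating family of irreducible representations is automatically completely norming for a $\rC^*$-algebra.

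For part (i), I would argue as follows. Suppose first that $\fN$ is dense in $\fA$. Then I would show directly that there is a separating family of finite-dimensional $*$-representations: for each nonzero $a\in\fA$, approximating $a$ by elements of $\fN$ and using lower semicontinuity of the norm under the finite-dimensional representations witnessing membership in $\fN$, one produces finite-dimensional representations detecting the norm of $a$ up to arbitrarily small error; collecting these over a dense sequence yields a separating, hence completely norming, family of finite-dimensional representations, so $\fA$ is RFD. Conversely, if $\fA$ is RFD, then by Theorem \ref{T:EL} the finite-dimensional irreducible $*$-representations are dense in $\widehat\fA$, and I would use this density together with $\|a\|=\sup_{[\pi]}\|\pi(a)\|$ to show that for each $a$ and each $\eps>0$ there is a finite-dimensional $*$-representation $\pi$ with $\|\pi(a)\|>\|a\|-\eps$. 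A compactness argument on the closed ball of the matrix algebra generated by $a$, or a direct argument dilating this near-optimal representation to an exactly norming one, then places each $a$ in the closure of $\fN$; the genuinely careful step is promoting the approximate equality $\|\pi(a)\|\approx\|a\|$ to the exact equality $\|\pi(a)\|=\|a\|$ required for membership in $\fN$ itself rather than merely its closure.

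For part (ii), the forward direction is immediate: if every irreducible $*$-representation of $\fA$ is finite-dimensional, then since $\|a\|=\sup_{[\pi]\in\widehat\fA}\|\pi(a)\|$ is attained (on the closed unit ball of the relevant commutative or abelian subalgebra, or by a direct spectral argument) at some irreducible $\pi$, that $\pi$ is finite-dimensional and achieves $\|\pi(a)\|=\|a\|$, giving $a\in\fN$ and hence $\fN=\fA$. For the reverse direction, assuming $\fN=\fA$, suppose toward a contradiction that there were an infinite-dimensional irreducible $*$-representation $\rho$. I would pick an element $a$ whose norm is attained only on the unitary equivalence class of $\rho$—for instance by using the spectrum and a suitable positive element supported near $[\rho]$—and observe that $a\in\fN$ forces a finite-dimensional $\pi$ with $\|\pi(a)\|=\|a\|$. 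Decomposing $\pi$ into irreducibles and passing to one achieving the norm, this irreducible is finite-dimensional yet must coincide with (or be weakly contained in) $\rho$, contradicting Lemma \ref{L:fdimweakcontain}.

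The main obstacle I anticipate is the exactness requirement in the definition of $\fN$: membership demands an \emph{exact} norm equality $\|a\|=\|\pi(a)\|$ with $\pi$ finite-dimensional, not an approximation. In part (i) this is harmless because we only need $\fN$ \emph{dense}, but isolating the right separating/norming family and handling the supremum over $\widehat\fA$ cleanly requires care; in part (ii) the exactness is essential and the delicate construction is the element $a$ whose norm singles out the putative infinite-dimensional $\rho$, so that the contradiction with Lemma \ref{L:fdimweakcontain} can be extracted. I expect this selection of a norm-attaining witness, and verifying that the attainment genuinely occurs at $[\rho]$ rather than elsewhere in the spectrum, to be the crux of the argument.
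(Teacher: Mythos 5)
First, note that the paper does not actually prove this theorem: it is quoted verbatim from Courtney--Shulman (\cite{CS2019}, their Theorems 3.2 and 4.4), so there is no internal argument to compare against. Judged on its own terms, your outline gets the two easy implications right: if $\fN$ is dense, a triangle-inequality estimate does produce a separating (hence completely norming) family of finite-dimensional representations; and if every irreducible $*$-representation is finite-dimensional, the standard fact that every element attains its norm in \emph{some} irreducible representation (via a pure state $\phi$ with $\phi(a^*a)=\|a\|^2$, obtained as an extreme point of the face of norm-attaining states) gives $\fN=\fA$. But both hard implications are left genuinely open, and these are precisely where the content of the theorem lies.

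In (i), the implication RFD $\Rightarrow$ $\fN$ dense requires passing from $\|\pi(a)\|>\|a\|-\eps$ to a nearby element attaining its norm \emph{exactly}; you flag this but offer only ``a compactness argument'' or ``dilating the near-optimal representation to an exactly norming one''. The second route cannot work: for an RFD algebra that is not liminal (e.g.\ $\rC^*(\bF_2)$), there are elements whose norm is attained by no finite-dimensional representation at all, so no manipulation of representations fixes $a$ itself --- one must perturb the \emph{element}, e.g.\ by functional calculus (for $a\geq 0$, replace $a$ by $f(a)$ where $f$ is the identity on $[0,\|\pi(a)\|]$ and constant above it; then $\|f(a)-a\|\leq \|a\|-\|\pi(a)\|$ and $\|\pi(f(a))\|=\|f(a)\|$, with an additional step for non-positive elements). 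In (ii), the reverse implication rests on producing, for a given infinite-dimensional irreducible $\rho$, an element whose norm is attained only at representations weakly equivalent to $\rho$. No construction is given, and ``a positive element supported near $[\rho]$'' presupposes a peaking/separation property of $\widehat{\fA}$ that fails in general --- the scarcity of peaking representations is a recurring theme of this very paper (cf.\ Example \ref{E:locpeak}) --- while for a primitive RFD algebra such a witness would have to have its norm unattained by \emph{every} finite-dimensional representation, which is essentially the statement being proved. The logical skeleton (decompose a finite-dimensional representation into irreducibles and invoke Lemma \ref{L:fdimweakcontain}) is correct, but the witness element is the whole difficulty, and it is supplied only in \cite{CS2019}.
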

\begin{proof}
This is \cite[Theorems 3.2 and 4.4]{CS2019}.
\end{proof}

In \cite{CS2019}, the authors call a $\rC^*$-algebra FDI if it satisfies (ii) above. In the unital case, this is the same class as the \emph{liminal} (or \emph{CCR}) $\rC^*$-algebras \cite[Section 4.2]{dixmier1977}.

There is yet another related notion that we require. Let $\A$ be an operator algebra.  Following \cite{CR2018rfd}, we say that $\A$ is \emph{normed in finite-dimensions} (NFD) if for every $n\in\bN$ and every $A\in \bM_n(\A)$ there is a finite-dimensional Hilbert space $\H_\pi$ and a representation $\pi:\A\to B(\H_\pi)$ such that $\|\pi^{(n)}(A)\|=\|A\|$. 
If $\fA$ is a unital $\rC^*$-algebra, then it is well known that the irreducible $*$-representations of the $\rC^*$-algebra of $\bM_n(\fA)$ are all unitarily equivalent to one of the form $\pi^{(n)}$ for some irreducible $*$-representation $\pi$ of $\fA$ (see the last paragraph of \cite[page 485]{hopenwasser1973}). In view of this observation, it is an easy consequence of Theorem \ref{T:CS}(ii) that a unital $\rC^*$-algebra is NFD if and only if it is FDI. 
 
 Finally, we point out that the full group $\rC^*$-algebra of the free group $\bF_2$ is RFD but not liminal, as it is infinite-dimensional and primitive \cite{choi1980}.
  
\subsection{Biduals of operator algebras}\label{SS:biduals}
Some of our techniques will require us to work within biduals of operator algebras, so we recall some relevant basic facts here. 
Let $\fA$ be a $\rC^*$-algebra. Then, the Banach space $\fA^{**}$ can be given the structure of a von Neumann algebra (see for instance \cite[Theorem III.2.4]{takesaki2002} or \cite[Theorem A.5.6]{BLM2004}). Furthermore, if $\A\subset \fA$ is a subalgebra, then $\A^{**}$ can be viewed as subalgebra of $\fA^{**}$ (see the proof of \cite[Corollary 2.5.6]{BLM2004}). Throughout, we identify $\A$ with its canonical image in $\A^{**}$, so in particular we may view $\A$ as a subalgebra of $\A^{**}$.

Let $\fN$ be a von Neumann algebra with predual $\fN_*$. Let $\iota:\fN_*\to \fN^*$ be the canonical embedding. Let $\psi:\fA\to\fN$ be a bounded linear map.
We let \[\widehat{\psi}=\iota^*\circ \psi^{**}:\fA^{**}\to \fN.\] Then, $\widehat{\psi}$ is a weak-$*$ continuous extension of $\psi$.  In general, $\psi^{**}$ and $\widehat\psi$ do not coincide and it is important to distinguish between them; we will be consistent with our notations to prevent confusion. Note however that $\psi^{**}=\widehat{\psi}$ when $\fN$ is finite-dimensional.

Let $\H$ be a Hilbert space. Multiplication on the left or on the right by a fixed operator is weak-$*$ continuous on $B(\H)$. Therefore, $(B(\H)_*)^\perp\subset B(\H)^{**}$ is a weak-$*$ closed two-sided ideal. Thus, by \cite[Proposition I.10.5]{sakai1971} there is a central projection $\fz\in B(\H)^{**}$ such that
$
\fz B(\H)^{**}=(B(\H)_*)^\perp.
$
In particular, a functional $\phi\in B(\H)^*$ lies in $B(\H)_*$ if and only if
\[
\widehat\phi(\xi)=\widehat\phi((I-\fz)\xi), \quad \xi\in B(\H)^{**}.
\]
Let $\fK$ denote the ideal of compact operators on $\H$.  The following is well known, but we lack a precise reference.

\begin{lemma}\label{L:perpperpcompact}
We have that 
\[
\fK^{\perp\perp}= (I-\fz)B(\H)^{**}.
\]
In particular, if $(a_n)$ is a sequence of compact operators converging to $0$ in the weak-$*$ topology of $B(\H)$, then $( a_n)$ converges to $0$ in the weak-$*$ topology of $B(\H)^{**}$. 
\end{lemma}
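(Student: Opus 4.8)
The plan is to identify the central projection cutting out $\fK^{\perp\perp}$ and to show it equals $I-\fz$, and then to read off the convergence statement. Throughout I write $\widehat{\id}\colon B(\H)^{**}\to B(\H)$ for the canonical normal $*$-homomorphism extending the identity on $B(\H)$; as recorded above it is weak-$*$ continuous, restricts to the identity on $B(\H)$, and has kernel $\ker\widehat{\id}=(B(\H)_*)^\perp=\fz B(\H)^{**}$. Since $\fK$ is a closed two-sided ideal of $B(\H)$ and multiplication in a von Neumann algebra is separately weak-$*$ continuous, the bipolar $\fK^{\perp\perp}$, which is the weak-$*$ closure of $\fK$ in $B(\H)^{**}$, is a weak-$*$ closed two-sided ideal, hence of the form $e\,B(\H)^{**}$ for a central projection $e\in B(\H)^{**}$. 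The goal is thus to prove $e=I-\fz$.

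The key structural input is that the inclusion $\fK\hookrightarrow B(\H)$ induces an isometric normal $*$-isomorphism of $\fK^{**}$ onto $\fK^{\perp\perp}$, and that $\fK^{**}\cong B(\H)$ is a \emph{factor} (its only weak-$*$ closed two-sided ideals are trivial). Consider the restriction $\theta:=\widehat{\id}|_{\fK^{\perp\perp}}$, a normal $*$-homomorphism. Its image is a von Neumann subalgebra of $B(\H)$ (normal $*$-homomorphisms have weak-$*$ closed range) containing $\widehat{\id}(\fK)=\fK$, hence containing the weak-$*$ closure of $\fK$, which is all of $B(\H)$; so $\theta$ is surjective. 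Being a nonzero normal $*$-homomorphism on the factor $\fK^{\perp\perp}$, its kernel is a proper weak-$*$ closed ideal and therefore $\{0\}$, so $\theta$ is injective. Thus $\theta$ is a normal $*$-isomorphism $\fK^{\perp\perp}\to B(\H)$.

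I would then extract the identity from $\theta$. Injectivity of $\theta$ says $\fK^{\perp\perp}\cap\ker\widehat{\id}=\fK^{\perp\perp}\cap \fz B(\H)^{**}=\{0\}$, i.e. $e\fz=0$, so $e\le I-\fz$ and $\fK^{\perp\perp}\subseteq(I-\fz)B(\H)^{**}$. For the reverse inclusion, note that $\widehat{\id}$ is injective on $(I-\fz)B(\H)^{**}$ (its kernel is $\fz B(\H)^{**}$), while $\theta$ already maps $\fK^{\perp\perp}$ onto all of $B(\H)=\widehat{\id}\big((I-\fz)B(\H)^{**}\big)$; a short diagram chase (given $\xi\in(I-\fz)B(\H)^{**}$, choose $\eta\in\fK^{\perp\perp}$ with $\widehat{\id}(\eta)=\widehat{\id}(\xi)$, and use injectivity on $(I-\fz)B(\H)^{**}$ together with $\fK^{\perp\perp}\subseteq(I-\fz)B(\H)^{**}$ to conclude $\xi=\eta$) forces $(I-\fz)B(\H)^{**}\subseteq\fK^{\perp\perp}$. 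Hence $\fK^{\perp\perp}=(I-\fz)B(\H)^{**}$.

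Finally, for the convergence statement, if $a_n\in\fK\subseteq\fK^{\perp\perp}=(I-\fz)B(\H)^{**}$ then $a_n=(I-\fz)a_n$. Given any $\phi\in B(\H)^*$, the functional $a\mapsto\widehat\phi\big((I-\fz)a\big)$ on $B(\H)$ is normal: by the criterion recorded before the lemma it suffices that its weak-$*$ continuous extension $\xi\mapsto\widehat\phi((I-\fz)\xi)$ vanish on $\fz B(\H)^{**}$, which holds since $(I-\fz)\fz=0$. Calling this normal functional $\psi\in B(\H)_*$, we obtain $\widehat\phi(a_n)=\widehat\phi((I-\fz)a_n)=\psi(a_n)\to 0$, because $(a_n)$ converges to $0$ weak-$*$ in $B(\H)$, that is, against $B(\H)_*$; as $\phi$ was arbitrary, $(a_n)\to 0$ weak-$*$ in $B(\H)^{**}$. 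The main obstacle is the injectivity step, since it is precisely there that one must invoke the nontrivial fact $\fK^{\perp\perp}\cong\fK^{**}\cong B(\H)$, making $\fK^{\perp\perp}$ a factor on which a surjective normal $*$-homomorphism is automatically injective; the only other genuine source of care is bookkeeping the two distinct weak-$*$ topologies, that of $B(\H)=(B(\H)_*)^*$ and that of $B(\H)^{**}=(B(\H)^*)^*$, which is exactly the content being upgraded in the ``in particular'' clause.
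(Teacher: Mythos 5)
Your proof is correct, but it reaches the conclusion by a genuinely different mechanism than the paper, most notably for the inclusion $\fK^{\perp\perp}\subseteq(I-\fz)B(\H)^{**}$. The paper argues entirely by duality: for one inclusion it shows that any $\phi\in\fK^\perp$ annihilates $(I-\fz)B(\H)^{**}$, using that the normal functional $t\mapsto\widehat\phi((I-\fz)t)$ vanishes on the weak-$*$ dense ideal $\fK$; for the other, it decomposes an arbitrary $\phi\in B(\H)^*$ as $\omega+\tau$ with $\omega\in B(\H)_*$ and $\tau\in\fK^\perp$ by splitting a representation of $B(\H)$ along the ideal $\fK$, and then checks directly that $((I-\fz)B(\H)^{**})_\perp\subseteq\fK^\perp$. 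You instead invoke the structural fact that $\fK^{\perp\perp}\cong\fK^{**}\cong B(\H)$ is a factor, so that the surjective normal $*$-homomorphism $\theta=\widehat{\id}|_{\fK^{\perp\perp}}$ is automatically injective; this yields $\fK^{\perp\perp}\cap\fz B(\H)^{**}=\{0\}$ and hence one inclusion, while the reverse inclusion follows from surjectivity of $\theta$ together with injectivity of $\widehat{\id}$ on $(I-\fz)B(\H)^{**}$. The two routes trade off different standard inputs --- the paper's decomposition $B(\H)^*=B(\H)_*\oplus\fK^\perp$ versus your identification of $\fK^{**}$ with $B(\H)$ --- and both ultimately rest on the weak-$*$ density of $\fK$ in $B(\H)$. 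Your write-up also has the mild advantage of making the ``in particular'' clause explicit (the paper leaves it to the reader), and your verification there, namely that $a\mapsto\widehat\phi((I-\fz)a)$ is normal by the criterion recorded before the lemma, is exactly the right one.
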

\begin{proof}
Let $\phi\in \fK^\perp$. Now, $\fK^{\perp\perp}$ is a weak-$*$ closed two-sided ideal in $B(\H)^{**}$, so that $(I-\fz) \fK\subset \fK^{\perp\perp}$. We conclude that the linear functional
\[
t\mapsto \widehat\phi((I-\fz) t), \quad t\in B(\H)
\]
is weak-$*$ continuous and annihilates $\fK$, and thus is identically zero. This means that $\phi\in  ((I-\fz)B(\H)^{**})_\perp$. Hence,
\[
\fK^{\perp}\subset ((I-\fz)B(\H)^{**})_\perp
\]
or 
\[
(I-\fz)B(\H)^{**}\subset \fK^{\perp\perp}.
\]

Conversely,  given $\phi\in B(\H)^*$ we may use \cite[Theorem 8.4]{paulsen2002} to find a $*$-representation $\pi:B(\H)\to B(\H')$ and unit vectors $v,w\in \H'$ such that
\[
\phi(t)=\langle \pi(t)v,w\rangle, \quad t\in B(\H).
\]
Splitting $\pi$ into a direct sum according to the ideal $\fK$ (see \cite[page 15]{arveson1976inv}), we see that there is $\omega\in  B(\H)_*$ and $\tau\in \fK^\perp$ such that $\phi=\omega+\tau$.  If we assume further than $\phi \in ((1-\fz)B(\H)^{**})_\perp$, then we see for $k\in \fK$ that
\[
\phi(k)=\widehat\phi(\fz k)=\widehat{\omega}(\fz k)+\widehat\tau(\fz k)=\widehat\tau(\fz k)=0
\]
since $\fz \fK\subset \fK^{\perp\perp}$. In other words, we have shown that
\[
 ((I-\fz)B(\H)^{**})_\perp\subset \fK^\perp
\]
which implies
\[
\fK^{\perp\perp}\subset (I-\fz)B(\H)^{**}.
\]
\end{proof}

\subsection{Support projections of representations and non-commutative topology}\label{SS:suppproj}

Let $\fA$ be a $\rC^*$-algebra and let $\pi:\fA\to B(\H_\pi)$ be a $*$-representation. Then, $\pi^{**}:\fA^{**}\to B(\H_\pi)^{**}$ is a weak-$*$ continuous $*$-representation, and so is $\widehat\pi$.  Upon identifying elements with their canonical images inside biduals, we see that $\pi^{**}(t)=\pi(t)$ for every $t\in \fA$. Basic functional analytic arguments reveal that $\ker \pi^{**}=(\ker \pi)^{\perp\perp}$; indeed, both of these objects coincide with $(\ran \pi^*)^\perp$. Because $\fA^{**}$ is a von Neumann algebra and $\ker \pi^{**}$ is a weak-$*$ closed two-sided ideal, there is a central projection $\fe_\pi\in \fA^{**}$ such that
\[
\fe_\pi\fA^{**}=\ker \pi^{**}=(\ker \pi)^{\perp \perp}.
\]
We call the projection $\fs_\pi=I-\fe_\pi$ the \emph{support projection} of $\pi$. It has the property that $\pi^{**}$ is injective on $\fs_\pi \fA^{**}$.
In some simple situations, this projection can be readily identified.

\begin{lemma}\label{L:suppprojmult}
Let $\fA$ be a $\rC^*$-algebra and let $p\in \fA$ be a central projection. Let $\pi:\fA\to \fA$ be the $*$-representation defined as 
\[
\pi(a)=ap, \quad a\in \fA.
\]
Then, $p$ is the support projection of $\pi$.
\end{lemma}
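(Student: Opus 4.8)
The plan is to show directly that the central projection $\fe_\pi \in \fA^{**}$ defining $\fs_\pi$ equals $I - p$, so that $\fs_\pi = I - \fe_\pi = p$.

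First I would record that $\pi$ is a genuine $*$-homomorphism: since $p = p^* = p^2$ is central, $\pi(ab) = abp = (ap)(bp) = \pi(a)\pi(b)$ and $\pi(a^*) = a^* p = (ap)^* = \pi(a)^*$. Its kernel is the closed two-sided ideal $\fJ := \ker\pi = \{a \in \fA : ap = 0\}$. By the definition of the support projection, $\fe_\pi$ is the unique central projection of $\fA^{**}$ satisfying $\fe_\pi\fA^{**} = (\ker\pi)^{\perp\perp} = \fJ^{\perp\perp}$, uniqueness holding because two central projections generating the same weak-$*$ closed two-sided ideal must coincide. Since $I - p$ is itself a central projection of $\fA^{**}$ (it commutes with the weak-$*$ dense subalgebra $\fA$ and multiplication is separately weak-$*$ continuous), it therefore suffices to prove the identity $\fJ^{\perp\perp} = (I-p)\fA^{**}$.

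To establish this I would use the standard identification $\fJ^{\perp\perp} = \wsclos{\fJ}$, the weak-$*$ closure of $\fJ$ inside $\fA^{**}$, and verify two inclusions. For $\subseteq$: every $a \in \fJ$ satisfies $a = a(I-p) \in (I-p)\fA^{**}$, and $(I-p)\fA^{**}$ is weak-$*$ closed, being the kernel of the weak-$*$ continuous map $x \mapsto px$; hence $\wsclos{\fJ} \subseteq (I-p)\fA^{**}$. For $\supseteq$: given $x \in \fA^{**}$, choose by weak-$*$ density a net $(a_\lambda)$ in $\fA$ with $a_\lambda \to x$; then $(I-p)a_\lambda \in \fA$, and centrality of $p$ gives $\big((I-p)a_\lambda\big)p = (I-p)p\, a_\lambda = 0$, so $(I-p)a_\lambda \in \fJ$, while weak-$*$ continuity of left multiplication by $I - p$ yields $(I-p)a_\lambda \to (I-p)x$; thus $(I-p)x \in \wsclos{\fJ}$.

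Combining the inclusions gives $\fJ^{\perp\perp} = (I-p)\fA^{**}$, whence $\fe_\pi = I - p$ and $\fs_\pi = p$. The only genuine content is the identification of $\fJ^{\perp\perp}$ with the principal ideal $(I-p)\fA^{**}$; everything else is bookkeeping about central projections in the bidual. The step to handle with care is the interplay between the weak-$*$ topology and multiplication, namely that $I-p$ remains central in $\fA^{**}$ and that multiplication by it is weak-$*$ continuous, since this is precisely what legitimizes both the density argument and the weak-$*$ closedness invoked above.
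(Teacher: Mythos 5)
Your proof is correct, but it is organized differently from the paper's. You never identify the bitranspose $\pi^{**}$; instead you work entirely with the kernel ideal $\fJ=\ker\pi$ in $\fA$, use the bipolar identification $\fJ^{\perp\perp}=\wsclos{\fJ}$, and verify by a two-inclusion density argument that this equals $(I-p)\fA^{**}$, concluding by uniqueness of the central projection generating a given weak-$*$ closed two-sided ideal. The paper instead observes that $\xi\mapsto\xi p$ is a weak-$*$ continuous $*$-representation of $\fA^{**}$ agreeing with $\pi$ on $\fA$, hence equals $\pi^{**}$; this gives $(I-p)\fA^{**}\subset\ker\pi^{**}$, i.e.\ $I-p\leq\fe_\pi$, and the reverse inequality drops out in one line from $\fe_\pi p=\pi^{**}(\fe_\pi)=0$. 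The paper's route is shorter because having $\pi^{**}$ in hand lets it compare the two central projections directly in the order of $\fA^{**}$ rather than comparing the ideals they generate; your route is more self-contained in that it only uses the definition $\fe_\pi\fA^{**}=(\ker\pi)^{\perp\perp}$ and standard duality, at the cost of the explicit net argument for the inclusion $(I-p)\fA^{**}\subseteq\wsclos{\fJ}$. Both hinge on the same two facts you flag: centrality of $p$ persists to $\fA^{**}$ by density, and multiplication by a fixed element is separately weak-$*$ continuous.
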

\begin{proof}
A density argument shows that $p$ is also central in $\fA^{**}$. The map 
\[
\xi\mapsto \xi p, \quad \xi\in \fA^{**}
\]
is thus a weak-$*$ continuous $*$-representation agreeing with $\pi$ on $\fA$, so we infer that it coincides with $\pi^{**}$. Thus, $(I-p)\fA^{**}\subset \ker \pi^{**}=\fe_\pi \fA^{**}$ and $I-p\leq \fe_\pi$. Conversely, we find
\[
\fe_\pi p=\pi^{**}(\fe_\pi)=0
\]
so that $\fe_\pi\leq I-p$. This shows that $\fs_\pi=p$.
\end{proof}

We recall here Akemann's so-called \emph{hull-kernel structure}, which can be thought of as a noncommutative topology \cite{akemann1969}.  Let $\fA$ be a $\rC^*$-algebra. A projection $p\in \fA^{**}$ is said to be \emph{open} if there is an increasing net of positive contractions in $\fA$ converging to $p$ in the weak-$*$ topology of $\fA^{**}$. 
A projection $q\in \fA^{**}$ is \emph{closed} if $I-q$ is open in the previous sense. 

Elementary arguments show that $p$ is open if and only if there is a closed left ideal $\fJ\subset \fA$ with $\fJ^{\perp\perp}=\fA^{**}p$ \cite{akemann1969},\cite{akemann1970}.  In particular, if $\pi$ is a $*$-representation of $\fA$, then its support projection $\fs_\pi$ is always closed. We will use this observation implicitly throughout the paper.

Next, we give a well-known two-sided version of a deep theorem of Hay \cite[Theorem 4.2]{hay2007} (see also \cite[Theorem 3.3]{blecher2013} for a streamlined proof).

\begin{lemma}\label{L:Haytwosided}
Let $\fA$ be a unital $\rC^*$-algebra and let $\A\subset \fA$ be a unital subalgebra. Let $q\in \fA^{**}$ be a closed projection lying in $\A^{\perp\perp}$. Then, the two-sided ideal \[(I-q)\A^{**}(I-q)\cap \A\] admits a contractive approximate unit.
\end{lemma}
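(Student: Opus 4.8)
The plan is to recognize the ideal in question as a hereditary subalgebra of $\A$ and then invoke the general principle that hereditary subalgebras carry contractive approximate units, with Hay's theorem supplying the one genuinely deep step. First I would set $p=I-q$ and record two elementary observations: since $\A$ is unital we have $I\in \A\subset \A^{\perp\perp}$, so that $p\in \A^{\perp\perp}$; and since $q$ is closed, $p$ is open in $\fA^{**}$. I would then identify the ideal concretely. A direct computation shows that for $a\in \A$ one has $a=(I-q)a(I-q)$ if and only if $qa=0$ and $aq=0$, so that
\[
(I-q)\A^{**}(I-q)\cap \A=\{a\in \A:qa=aq=0\}.
\]
In particular this set is genuinely a two-sided ideal of $\A$, for which $p=I-q$ plays the role of a support projection.

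The substance of the argument is then supplied by the theorem of Hay. The one-sided version \cite[Theorem 4.2]{hay2007} (see also the streamlined treatment \cite[Theorem 3.3]{blecher2013}) asserts that, because $q$ is a \emph{closed} projection lying in $\A^{\perp\perp}$, the associated one-sided ideals $\{a\in \A:qa=0\}$ and $\{a\in \A:aq=0\}$ admit, respectively, one-sided contractive approximate units, each converging in the weak-$*$ topology to $p$. To upgrade this to the desired two-sided statement I would appeal to the hereditary subalgebra correspondence of Blecher--Hay--Neal \cite{BHN2008}: an open projection $p\in \A^{\perp\perp}$ determines the hereditary subalgebra $p\A^{**}p\cap \A$, which by construction possesses a genuine (two-sided) contractive approximate unit converging weak-$*$ to $p$. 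Since, by the computation above, $p\A^{**}p\cap \A$ is precisely the ideal in the statement, this yields the conclusion.

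The main obstacle, and the only deep ingredient, is Hay's theorem itself, which is what licenses the passage from the purely topological hypothesis that $q$ is closed to the analytic conclusion that the associated ideal possesses an approximate unit; everything else is formal. The remaining work is routine and I would keep it brief: the identification of the ideal is a one-line matrix-corner computation, and the transition from the one-sided picture to a two-sided approximate unit is handled by the hereditary subalgebra machinery. The two points I would take care to verify are that $\A$ being unital (hence approximately unital) is exactly what makes the Blecher--Hay--Neal correspondence applicable here, and that the approximate unit, converging weak-$*$ to $p=I-q$, indeed lands in the correct ideal so that it serves as a two-sided contractive approximate unit for $(I-q)\A^{**}(I-q)\cap \A$.
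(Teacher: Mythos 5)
Your argument is correct and follows essentially the same route as the paper: the single deep input is Hay's theorem in the form of \cite[Theorem 3.3]{blecher2013}, which produces a one-sided contractive approximate unit for $(I-q)\A^{**}\cap \A$ converging weak-$*$ to $I-q$, after which the two-sided statement is a formal upgrade (the paper performs this step via \cite[Proposition 2.5.8]{BLM2004} and its proof, where you invoke the Blecher--Hay--Neal hereditary-subalgebra correspondence, but these amount to the same bookkeeping). One cosmetic caveat: the set $(I-q)\A^{**}(I-q)\cap \A=\{a\in \A: qa=aq=0\}$ is in general only a hereditary subalgebra of $\A$ rather than a two-sided ideal (for $a$ in the set and $b\in\A$ one gets $q(ab)=0$ but not $(ab)q=0$), though the paper's own statement commits the same abuse and nothing in either proof depends on it.
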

\begin{proof}
Let $\J=(I-q)\A^{**}\cap \A$. By  \cite[Theorem 3.3]{blecher2013}, $\J$ admits a left contractive approximate unit $(e_i)$ and $\J^{\perp \perp}=(I-q)\A^{**}$.  In turn, by \cite[Proposition 2.5.8]{BLM2004} we see that $(e_i)$ converges to $(I-q)$ in the weak-$*$ topology of $\fA^{**}$. Thus, by \cite[Proposition 2.5.8]{BLM2004} and its proof we obtain that $(I-q)\A^{**}(I-q)\cap \A$ has both a left and a right contractive approximate unit, and hence a contractive approximate unit.
%A standard argument then shows that $(I-q)\A^{**}(I-q)\cap \A$ has a contractive approximate unit (see for instance the last paragraph of the proof of \cite[Proposition 2.2.1]{rundebook2002}).
\end{proof}

\subsection{Purity, boundary representations and the $\rC^*$-envelope}\label{SS:bdryrep}

In this paper we are interested in unital operator algebras and their $\rC^*$-envelopes, the latter being defined herein. However, for technical reasons that will become apparent in Section \ref{S:C*lim}, we need to introduce $\rC^*$-envelopes for the more general class of unital operator spaces.

Let $\M\subset B(\H)$ be a unital operator space and let $\psi:\M\to B(\K)$ be a unital completely contractive map. Consider the operator system
\[
\fS=\{a+b^*:a,b\in \M\}\subset B(\H).
\]
We note that $\rC^*(\M)=\rC^*(\fS)$. By \cite[Proposition 3.5]{paulsen2002}, there is a unique completely positive map $\tilde\psi:\fS\to B(\K)$ extending $\psi$. This basic fact is very useful, and shows that unital completely contractive maps on $\M$ correspond, in a natural way, to unital completely positive maps on $\fS$. We use this well-known correspondence tacitly in what follows. In particular, we import many important results from the setting of operator systems and unital completely positive maps to that of unital operator spaces and unital completely contractive maps. 

%Recall that $\M\subset B(\H)$ is a unital operator space and  $\psi:\M\to B(\K)$ is a unital completely contractive map, with unique completely positive extension $\tilde\psi:\fS\to B(\K)$. 
Given a completely positive map $\phi:\fS\to B(\K)$, we write $\phi\leq  \tilde\psi$ if $\tilde\psi-\phi$ is also completely positive. We say that $\psi$ is \emph{pure} if whenever $\phi\leq \tilde\psi$, there exists a scalar $0\leq \lambda\leq 1$ such that $\phi=\lambda \tilde\psi$. 

If $\psi:\rC^*(\M)\to B(\K)$ is a unital completely positive map, then it is pure if and only if its minimal Stinespring representation $(\sigma_\psi,\K_\psi)$ is irreducible \cite[Corollary 1.4.3]{arveson1969}. Here, $\K_\psi$ is a Hilbert space containing $\K$ and $\sigma_\psi:\rC^*(\M)\to B(\K_\psi)$ is a unital $*$-representation such that
\[
\psi(t)=P_{\K}\sigma_\psi(t)|_{\K}, \quad t\in \rC^*(\M).
\]
See \cite[Theorem 4.1]{paulsen2002} for more details about the existence of the Stinespring representation.

Following \cite{BLM2004}, we say that a \emph{$\rC^*$-extension} of $\M$ is a pair $(\fA,\iota)$ consisting of a unital $\rC^*$-algebra $\fA$ and a unital completely isometric linear map $\iota:\M\to \fA$ such that $\fA=\rC^*(\iota(\M))$. The \emph{$\rC^*$-envelope} of $\M$ is a $\rC^*$-extension $(\fE,u)$ with the property that given any other $\rC^*$-extension $(\fA,\iota)$, there exists a surjective $*$-homomorphism $\pi:\fA\to \fE$ with $\pi\circ \iota=u$. We usually abuse notation slightly and consider just the $\rC^*$-algebra $\fE$ to be the $\rC^*$-envelope, and thus denote it by $\rC^*_e(\M)$; in this case, we identify $\M$ with its image $u(\M)$ in $\fE$. It is readily seen that the $\rC^*$-envelope is essentially unique. The fact that it exists is a non-trivial result of Hamana \cite{hamana1979}. Its universal property implies that the $\rC^*$-envelope is a quotient of every other $\rC^*$-extension through a quotient map that is completely isometric on $\M$. For this reason, we think of the $\rC^*$-envelope as the minimal $\rC^*$-extension. 

Remarkably, when we start with a unital operator algebra $\A$, then the map $u:\A\to \rC^*_e(\A)$ above is actually multiplicative \cite[Proposition 4.3.5]{BLM2004}. In particular, $u(\A)$ is also a unital operator algebra in this case.

To determine the $\rC^*$-envelope, the most commonly used technique is due to Arveson and consists of finding enough $*$-representations of $\rC^*(\M)$ with a certain uniqueness property. Let us be more precise. Let $\pi:\rC^*(\M)\to B(\H_\pi)$ be a unital $*$-representation. We say that it has the \emph{unique extension property} with respect to $\M$ if the only unital completely positive extension of $\pi|_\M$ to $\rC^*(\M)$ is $\pi$ itself. Before proceeding, we record some elementary facts related to the unique extension property.

\begin{lemma}\label{L:directsumUEP}
Let $\M\subset B(\H)$ be a unital operator space. The following statements hold.
\begin{enumerate}[{\rm (i)}]
\item Let $\pi:\rC^*(\M)\to B(\H_\pi)$ be a unital $*$-representation with the unique extension property with respect to $\M$. Let $\X\subset \H_\pi$ be a closed reducing subspace for  $\pi(\rC^*(\M))$. Then, the unital $*$-representation
\[
t\mapsto \pi(t)|_\X, \quad t\in \rC^*(\M)
\]
also has the unique extension property with respect to $\M$.

\item Let
\[
\pi_\lambda:\rC^*(\M)\to B(\H_\lambda), \quad \lambda\in \Lambda
\]
be a set of unital $*$-representations with the unique extension property with respect to $\M$. Then, the unital $*$-representation 
\[
t\mapsto \bigoplus_{\lambda\in \Lambda}\pi_\lambda(t), \quad t\in \rC^*(\M)
\]
also has the unique extension property with respect to $\M$.
\end{enumerate}
\end{lemma}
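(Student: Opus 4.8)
The plan is to reduce everything to the defining property of the unique extension property by manufacturing, from a hypothetical competing extension, a unital completely positive map on the full ambient space that agrees with the given representation on $\M$. For (i), I would argue as follows. Write $P$ for the orthogonal projection of $\H_\pi$ onto $\X$; since $\X$ reduces $\pi(\rC^*(\M))$, so does $\X^\perp$, and $\pi$ decomposes as the block-diagonal $*$-representation $\pi|_\X \oplus \pi|_{\X^\perp}$ relative to $\H_\pi = \X \oplus \X^\perp$. Now suppose $\rho\colon \rC^*(\M) \to B(\X)$ is a unital completely positive map with $\rho|_\M = (\pi|_\X)|_\M$. I would form the block-diagonal map $\Psi = \rho \oplus \pi|_{\X^\perp}$, which is unital and completely positive, and observe that on $\M$ it equals $(\pi|_\X)|_\M \oplus (\pi|_{\X^\perp})|_\M = \pi|_\M$. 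The unique extension property of $\pi$ then forces $\Psi = \pi$, and comparing the $\X$-corners gives $\rho = \pi|_\X$. This settles (i).

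For (ii), write $\pi = \bigoplus_\lambda \pi_\lambda$ on $\H = \bigoplus_\lambda \H_\lambda$ and let $\rho\colon \rC^*(\M) \to B(\H)$ be unital completely positive with $\rho|_\M = \pi|_\M$; the goal is $\rho = \pi$. The first step recovers the diagonal blocks. Letting $P_\lambda$ be the projection onto $\H_\lambda$, the compression $\rho_\lambda = P_\lambda \rho(\cdot) P_\lambda|_{\H_\lambda}$ is unital and completely positive, and since each $\H_\lambda$ reduces $\pi$ we get $\rho_\lambda|_\M = \pi_\lambda|_\M$. The unique extension property of $\pi_\lambda$ then yields $\rho_\lambda = \pi_\lambda$; that is, $P_\lambda \rho(t) P_\lambda|_{\H_\lambda} = \pi_\lambda(t)$ for every $t \in \rC^*(\M)$ and every $\lambda$.

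The second step, controlling the off-diagonal blocks of $\rho$, is where the main obstacle lies, and I would resolve it through the multiplicative domain. Fix $a \in \M$. As $\rho$ is completely positive it satisfies the Schwarz inequality $\rho(a^*a) \geq \rho(a)^*\rho(a)$, and because $\rho(a) = \pi(a)$ the right-hand side equals $\pi(a)^*\pi(a) = \pi(a^*a)$. Hence $X = \rho(a^*a) - \pi(a^*a)$ is a positive operator whose diagonal blocks all vanish by Step 1, since $P_\lambda X P_\lambda = \pi_\lambda(a^*a) - \pi_\lambda(a^*a) = 0$. For a positive operator, $P_\lambda X P_\lambda = 0$ forces $X P_\lambda = 0$, and as $\sum_\lambda P_\lambda = I$ strongly this gives $X = 0$; thus $\rho(a^*a) = \rho(a)^*\rho(a)$, and symmetrically $\rho(aa^*) = \rho(a)\rho(a)^*$. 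By Choi's characterization of the multiplicative domain of a unital completely positive map, every $a \in \M$ then lies in the multiplicative domain of $\rho$. Since that domain is a $\rC^*$-subalgebra containing $\M$, it equals $\rC^*(\M)$, so $\rho$ is a $*$-representation. Finally, $\rho$ and $\pi$ agree on $\M$, hence on $\M^*$ (as $\rho(a^*) = \rho(a)^* = \pi(a)^* = \pi(a^*)$) and therefore on the dense $*$-subalgebra generated by $\M$; by continuity $\rho = \pi$, which completes the proof.
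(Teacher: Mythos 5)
Your proof is correct. Part (i) is exactly the paper's argument: pad the competing extension $\rho$ with the complementary block $\pi|_{\X^\perp}$ to get a unital completely positive map agreeing with $\pi$ on $\M$, invoke the unique extension property of $\pi$, and compare corners. For part (ii) the paper simply cites Arveson (Proposition 4.4 of his hyperrigidity paper) rather than giving a proof, so your contribution there is to supply the details. Your argument is sound: Step 1 correctly recovers the diagonal blocks (note that the unique extension property gives $P_\lambda\rho(\cdot)P_\lambda|_{\H_\lambda}=\pi_\lambda$ on all of $\rC^*(\M)$, not just on $\M$, which is what you need when you later evaluate at $a^*a\notin\M$ -- you use this correctly); Step 2's combination of the Schwarz inequality, the vanishing of the diagonal of the positive operator $\rho(a^*a)-\pi(a^*a)$, and Choi's multiplicative domain theorem is a clean way to force multiplicativity, after which density of the $*$-algebra generated by $\M$ finishes the job. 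This is essentially the same mechanism as Arveson's own proof (he instead shows directly that each $\phi(x)$ commutes with every $P_\lambda$ via a Schwarz-type estimate on the off-diagonal corner), so the two routes are close in spirit; yours has the small advantage of being phrased entirely in terms of the standard multiplicative domain machinery, at the cost of passing through the auxiliary operator $X$ and the completeness argument $\sum_\lambda P_\lambda = I$ in the strong topology, which you handle correctly.
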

\begin{proof}
(i) Let $\psi:\rC^*(\M)\to B(\X)$ be a unital completely positive map that satisfies
\[
\psi(a)=\pi(a)|_\X, \quad a\in \M.
\]
Define $\Psi:\rC^*(\M)\to B(\H_\pi)$ as
\[
\Psi(t)=\psi(t)\oplus \left(\pi(t)|_{\X^\perp}\right), \quad t\in \rC^*(\M).
\]
Then, we see that $\Psi$ is a unital completely positive map that agrees with $\pi$ on $\M$. By assumption, this means that $\Psi=\pi$ and in particular 
\[
\psi(t)=\pi(t)|_\X, \quad t\in \rC^*(\M).
\]

(ii) This is  \cite[Proposition 4.4]{arveson2011}.
\end{proof}

Let $\pi:\rC^*(\M)\to B(\H_\pi)$ be a $*$-representation. Consider its essential subspace
\[
\E_\pi=\ol{\pi(\rC^*(\M))\H_\pi}.
\]
Then, $\pi$ is said to have the \emph{essential unique extension property} with respect to $\M$ if the unital $*$-representation $\rho:\rC^*(\M)\to B(\E_\pi)$ defined as 
\[
\rho(t)=\pi(t)|_{\E_\pi}, \quad t\in \rC^*(\M)
\]
has the usual unique extension property with respect to $\M$.

\begin{lemma}\label{L:UEPisom}
Let $\M\subset B(\H)$ be a unital operator space and let $\pi:\rC^*(\M)\to B(\H_\pi)$ be a unital $*$-representation with the unique extension property with respect to $\M$. Let $\K$ be another Hilbert space and let $V:\H_\pi\to \K$ be an isometry. Define $\pi':\rC^*(\M)\to B(\K)$ as
\[
\pi'(t)= V\pi(t)V^*, \quad t\in \rC^*(\M).
\]
%Then, $\pi'$ is a $*$-representation and it is the only completely positive extension of $\pi'|_\M$ to $\rC^*(\M)$.
Then, $\pi'$ is a $*$-representation with the essential unique extension property with respect to $\M$. 
\end{lemma}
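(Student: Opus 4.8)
The plan is to verify directly that $\pi'$ is a (possibly degenerate) $*$-representation, to compute its essential subspace explicitly, and then to recognize the compression of $\pi'$ to that subspace as a unitary copy of $\pi$, so that the unique extension property transfers. First I would check that $\pi'$ is a $*$-homomorphism. Since $V$ is an isometry we have $V^*V=I_{\H_\pi}$, so for $s,t\in \rC^*(\M)$ one gets
\[
\pi'(s)\pi'(t)=V\pi(s)V^*V\pi(t)V^*=V\pi(s)\pi(t)V^*=\pi'(st),
\]
and $\pi'(t^*)=(V\pi(t)V^*)^*=\pi'(t)^*$. Note that $\pi'$ need not be unital, as $\pi'(1)=VV^*$ is merely the orthogonal projection onto $\ran V$; this is why one must pass to the essential subspace.

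Next I would identify the essential subspace $\E_{\pi'}=\ol{\pi'(\rC^*(\M))\K}$. Here I would use that $V^*$ carries $\K$ onto $\H_\pi$, together with the unitality of $\pi$, which yields $\pi(\rC^*(\M))\H_\pi=\H_\pi$. Combining these,
\[
\pi'(\rC^*(\M))\K=V\pi(\rC^*(\M))V^*\K=V\pi(\rC^*(\M))\H_\pi=V\H_\pi=\ran V,
\]
and $\ran V$ is already closed because $V$ is an isometry, so $\E_{\pi'}=\ran V$.

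Finally, writing $\rho(t)=\pi'(t)|_{\ran V}$ for the compression and viewing $V$ as a unitary from $\H_\pi$ onto $\ran V$, the computation $\rho(t)V\xi=V\pi(t)V^*V\xi=V\pi(t)\xi$ shows that $\rho(t)V=V\pi(t)$, so $\rho$ is unitarily equivalent to $\pi$ through $V$. To conclude I would observe that the unique extension property is invariant under unitary equivalence: given any unital completely positive $\psi:\rC^*(\M)\to B(\ran V)$ with $\psi|_\M=\rho|_\M$, the map $V^*\psi(\cdot)V$ is unital completely positive and agrees with $\pi$ on $\M$, hence equals $\pi$ by the hypothesis on $\pi$, forcing $\psi=\rho$. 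Thus $\rho$ enjoys the unique extension property with respect to $\M$, which is exactly the statement that $\pi'$ has the essential unique extension property. The computations are all routine; the only step requiring genuine care is the identification of $\E_{\pi'}$, where one must use both the surjectivity of $V^*$ and the unitality of $\pi$.
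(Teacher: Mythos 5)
Your proof is correct and follows essentially the same route as the paper: both arguments reduce to observing that $\pi'$ restricted to $V\H_\pi$ is a unitary copy of $\pi$ (the paper writes $\pi'(t)=U\pi(t)U^*\oplus 0$ with respect to $\K=V\H_\pi\oplus(V\H_\pi)^\perp$, which is the same identification of the essential subspace you carry out) and that the unique extension property is invariant under unitary equivalence. Your version merely spells out the steps the paper labels as ``readily verified,'' including the correct use of $V^*\K=\H_\pi$ and the unitality of $\pi$ to pin down $\E_{\pi'}=\ran V$.
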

\begin{proof}
Since $V$ is an isometry, it is readily verified that $\pi'$ is a $*$-representation.  Let $U=P_{V\H_\pi}V:\H_\pi\to V\H_\pi$. Then, $U$ is unitary and with respect to the decomposition $\K=V\H_\pi\oplus (V\H_\pi)^\perp$ we find
\[
\pi'(t)=U\pi(t)U^* \oplus 0, \quad t\in \rC^*(\M).
\]
It is easily verified that $U\pi(\cdot)U^*$ is a unital $*$-representation of $\rC^*(\M)$ with the unique extension property with respect to $\M$, which immediately implies the desired statement.
\end{proof}

Due to important contributions of Muhly--Solel \cite{MS1998} and Dritschel--McCullough \cite{dritschel2005}, it is known that unital $*$-representations of $\rC^*(\M)$ with the unique extension property with respect to $\M$ are always plentiful. In fact, more is true. As the next result shows, we can also require these $*$-representations to be irreducible. 

If an irreducible $*$-representation $\pi$ of $\rC^*(\M)$ has the unique extension with respect to $\M$, then we say that $\pi$ is a \emph{boundary representation} for $\M$ on $\rC^*(\M)$.  Recall that a unital completely contractive map $\psi:\M\to B(\F)$ is called a \emph{matrix state} whenever $\F$ is finite-dimensional. The following improves on \cite[Theorem 3.1]{kleski2014bdry} slightly.

\begin{theorem}\label{T:bdryexist}
Let $\M\subset B(\H)$ be a unital operator space, let $n\in \bN$ and let $A\in \bM_n(\M)$. Then, there is a boundary representation $\beta:\rC^*(\M)\to B(\H_\beta)$ for $\M$ such that 
\[
\|A\|=\|\beta^{(n)}(A)\|.
\]
Furthermore, there is a finite-dimensional subspace $\F\subset \H_\beta$ such that if we define a matrix state $\psi:\M\to B(\F)$ as
\[
\psi(b)=P_\F\beta(b)|_\F, \quad b\in \M
\]
then we have
\[
\|A\|=\|\psi^{(n)}(A)\|.
\]
\end{theorem}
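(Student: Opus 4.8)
The plan is to produce the boundary representation in two stages: first locate an \emph{irreducible} $*$-representation of $\rC^*(\M)$ that norms $A$ together with a single vector witnessing the norm, then dilate that representation to a boundary representation without losing the norm. The finite-dimensional subspace $\F$ will be read off from the norming vector, so it never matters whether the ambient representation is finite-dimensional. To handle the non-self-adjointness of $A$, I would first pass to the self-adjoint element
\[
\widetilde A=\begin{pmatrix}0&A\\ A^*&0\end{pmatrix}\in \bM_{2n}(\fS),
\]
which satisfies $\|\widetilde A\|=\|A\|$ and, for every $*$-representation $\rho$ of $\rC^*(\M)$, obeys $\|\rho^{(2n)}(\widetilde A)\|=\|\rho^{(n)}(A)\|$. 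Replacing $\widetilde A$ by $-\widetilde A$ if necessary (a harmless unitary conjugation by $\operatorname{diag}(I_n,-I_n)$), I may assume $\|\widetilde A\|=\max\spec(\widetilde A)$.

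Next I would extract a \emph{pure} norming state at the matrix level. The set $F=\{\mu\in\S(\bM_{2n}(\rC^*(\M))):\mu(\widetilde A)=\|\widetilde A\|\}$ is the collection of states annihilating the positive element $\|\widetilde A\|\,1-\widetilde A$, hence a nonempty weak-$*$ compact \emph{face} of the state space. By Krein--Milman it has an extreme point $\mu^*$, and being extreme in a face it is extreme in the whole state space, i.e.\ a pure state. Its GNS representation $(\sigma_{\mu^*},\H_{\mu^*},\xi_{\mu^*})$ is irreducible, and by the structure of irreducible $*$-representations of $\bM_{2n}(\rC^*(\M))$ recalled in Subsection~\ref{SS:RFD} we may write $\sigma_{\mu^*}=\pi^{(2n)}$ on $\H_\pi^{(2n)}$ for an irreducible $\pi:\rC^*(\M)\to B(\H_\pi)$, with $\xi_{\mu^*}=(\xi_1,\dots,\xi_{2n})\in \H_\pi^{(2n)}$. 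Writing $u=(\xi_1,\dots,\xi_n)$ and $v=(\xi_{n+1},\dots,\xi_{2n})$, a direct computation gives $\|A\|=\mu^*(\widetilde A)=2\operatorname{Re}\langle \pi^{(n)}(A)v,u\rangle$. Since $\|u\|^2+\|v\|^2=1$, the chain
\[
\|A\|=2\operatorname{Re}\langle \pi^{(n)}(A)v,u\rangle\le 2\|A\|\,\|u\|\,\|v\|\le \|A\|
\]
forces equality throughout; in particular $\|\pi^{(n)}(A)\|=\|A\|$, and the Cauchy--Schwarz equality shows that $u,v$ are (after normalization) attained singular vectors for $\pi^{(n)}(A)$. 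I would then set $\F=\spn\{\xi_1,\dots,\xi_{2n}\}\subset \H_\pi$, a subspace of dimension at most $2n$.

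Finally I would dilate. As $\pi$ is irreducible, it is a pure unital completely positive map (its minimal Stinespring representation is $\pi$ itself, which is irreducible; cf.\ \cite[Corollary 1.4.3]{arveson1969}), so $\pi|_\M$ is a pure matrix/UCC map. By the non-commutative dilation theory underlying the existence of boundary representations \cite{MS1998},\cite{dritschel2005},\cite{arveson2008},\cite{davidsonkennedy2015}, this pure map admits a maximal dilation that remains pure, hence is an irreducible $*$-representation with the unique extension property, i.e.\ a boundary representation $\beta:\rC^*(\M)\to B(\H_\beta)$ for $\M$ with $\H_\pi\subseteq \H_\beta$ and $\pi(t)=P_{\H_\pi}\beta(t)|_{\H_\pi}$ for all $t\in \rC^*(\M)$. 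Since compressions do not increase norms,
\[
\|A\|=\|\pi^{(n)}(A)\|\le \|\beta^{(n)}(A)\|\le \|A\|,
\]
which proves the first assertion. For the ``furthermore'' part, since $\F\subseteq \H_\pi\subseteq \H_\beta$ the matrix state $\psi(b)=P_\F\beta(b)|_\F$ coincides with $P_\F\pi(b)|_\F$; evaluating $\psi^{(n)}(A)$ against the normalized singular vectors, whose components lie in $\F$, yields $\|\psi^{(n)}(A)\|\ge \|A\|$, and hence equality.

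The \textbf{main obstacle} is the dilation step: one must know not merely that a pure map dilates to a representation with the unique extension property, but that \emph{purity is preserved} by a maximal dilation, so that the dilated representation is again irreducible and therefore genuinely a boundary representation. This is exactly the deep input from \cite{dritschel2005},\cite{arveson2008},\cite{davidsonkennedy2015}. A secondary technical point, which the self-adjoint-plus-GNS reduction is designed to finesse, is that $\pi$ and $\beta$ may be infinite-dimensional, so the norm $\|\pi^{(n)}(A)\|$ need not be attained by genuine vectors in general; the pure norming \emph{state} $\mu^*$ supplies the attaining vector and thereby the finite-dimensional subspace $\F$ for free.
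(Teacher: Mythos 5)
Your construction of the irreducible representation $\pi$ with $\|\pi^{(n)}(A)\|=\|A\|$ attained by vectors is correct and cleanly done, but the final dilation step has a genuine gap. You assert that because $\pi$ is irreducible (hence pure as a unital completely positive map on $\rC^*(\M)$), the restriction $\pi|_\M$ is a pure unital completely contractive map. Purity does not pass to restrictions: a completely positive map on $\fS=\{a+b^*:a,b\in\M\}$ dominated by the canonical extension of $\pi|_\M$ need not arise by restricting a completely positive map dominated by $\pi$ on all of $\rC^*(\M)$, so the order interval below $\pi|_\fS$ is in general strictly larger than the one below $\pi$. Concretely, for the disc algebra $\M=A(\bD)\subset C(\ol{\bD})$ the character $\eps_0$ of evaluation at the origin is an irreducible $*$-representation of $\rC^*(\M)=C(\ol{\bD})$, yet $\eps_0|_{\fS}$ is the average $\int_{\bT}\eps_\zeta|_{\fS}\,dm(\zeta)$ and is therefore not pure; its maximal dilation is the (highly reducible) multiplication representation on $L^2(\bT,m)$, not a boundary representation. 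This is why the paper invokes the unique extension property, and not mere irreducibility, when it needs purity of a restriction (via \cite[Proposition 2.12]{FT2020}). Nothing in your construction rules out this phenomenon: your extreme point $\mu^*$ was extracted from a face of the state space of the $\rC^*$-algebra $\bM_{2n}(\rC^*(\M))$, whereas what the dilation theorem \cite[Theorem 2.4]{davidsonkennedy2015} requires is purity of the induced state on the operator system $\bM_{2n}(\fS)$, and the restriction of an extreme point of your face need not be extreme in the corresponding face of $\S(\bM_{2n}(\fS))$. Without purity there, $\pi|_\M$ only dilates to a representation with the unique extension property that may fail to be irreducible, and recovering a single irreducible summand that still norms $A$ is exactly the difficulty the theorem is meant to resolve.

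The repair is to run your Krein--Milman/face argument one level down, on the state space of the operator system $\bM_{2n}(\fS)$ (equivalently, to use the pure norming \emph{matrix state} on $\bM_n(\M)$ supplied by the proof of \cite[Theorem 2.5]{kleski2014bdry}, which is what the paper does). A pure matrix state of $\bM_n(\M)$ dilates, by \cite[Theorem 2.4]{davidsonkennedy2015}, to a boundary representation $\gamma$ of $\bM_n(\rC^*(\M))$ for $\bM_n(\M)$, and Hopenwasser's theorem identifies $\gamma$ with $\beta^{(n)}$ for a boundary representation $\beta$ for $\M$; the finite-dimensional subspace $\F$ is then read off from the two-dimensional compression space rather than from a norming vector in a GNS space. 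With that modification the remainder of your argument, including the norm bookkeeping for the ``furthermore'' clause, goes through.
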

\begin{proof}
By  the proof of \cite[Theorem 2.5]{kleski2014bdry}, there is a two-dimensional Hilbert space $\E$ and a pure matrix state $\phi:\bM_n(\M)\to B(\E)$ such that $\|\phi(A)\|=\|A\|$. 
In turn, by \cite[Theorem 2.4]{davidsonkennedy2015} we find a boundary representation $\gamma:\rC^*(\bM_n(\M))\to B(\H_\gamma)$ for $\bM_n(\M)$ with the property that
\[
\phi(B)=P_{\E}\gamma(B)|_\E, \quad B\in \bM_n(\M).
\]
Note now that $\rC^*(\bM_n(\M))=\bM_n(\rC^*(\M))$. It follows from the last paragraph in \cite[page 485]{hopenwasser1973} that there is an irreducible $*$-representation $\beta:\rC^*(\M)\to B(\H_\beta)$ and a unitary operator $U:\H_\beta^{(n)}\to \H_\gamma$ with the property that
\[
\gamma(T)=U \beta^{(n)}(T)U^*, \quad T\in \bM_n(\rC^*(\M)).
\]
By the theorem on \cite[page 486]{hopenwasser1973}, we conclude that $\beta$ is a boundary representation for $\M$. Moreover,
\[
\|\beta^{(n)}(A)\|=\|\gamma(A)\|\geq \|\phi(A)\|=\|A\|
\]
so in fact $\|\beta^{(n)}(A)\|=\|A\|$. 

Next, we see that
\begin{align*}
P_{U^*\E}\beta^{(n)}(A)|_{U^*\E}&=U^* P_\E U\beta^{(n)}(A)|_{U^*\E}\\
&=U^* P_\E \gamma(A)U|_{U^*\E}\\
&=V^* (P_\E \gamma(A)|_{\E})V
\end{align*}
where $V=P_\E U|_{U^*\E}:U^*\E\to \E$ is a unitary operator. Thus, we conclude that 
\[
\|P_{U^*\E}\beta^{(n)}(A)|_{U^*\E}\|=\|\phi(A)\|=\|A\|.
\]
Finally, choose any finite-dimensional subspace $\F\subset \H_\beta$ with the property that $\F^{(n)}$ contains the  two-dimensional subspace $U^*\E$ and define $\psi:\M\to B(\F)$ as
\[
\psi(b)=P_\F\beta(b)|_\F, \quad b\in \M.
\]
Then, we find
\[
\|\psi^{(n)}(A)\|=\|P_{\F^{(n)}} \beta^{(n)}(A)|_{\F^{(n)}}\|\geq \|P_{U^*\E}\beta^{(n)}(A)|_{U^*\E}\|=\|A\|
\]
as desired.
\end{proof}

By Theorem \ref{T:bdryexist}, there always exists a set $\Delta$ of boundary representations for $\M$ on $\rC^*(\M)$ with the property that $\bigoplus_{\beta\in \Delta}\beta$ is completely isometric on $\M$ (see also \cite[Theorem 7.1]{arveson2008} and  \cite[Theorem 3.3]{davidsonkennedy2015}). Let $\Sigma_\Delta\subset \rC^*(\M)$ denote the kernel of $\bigoplus_{\beta\in \Delta}\beta$. We may invoke \cite[Theorem 2.2.3]{arveson1969} to see  that $\Sigma_\Delta$ coincides with the intersection of the kernels of all boundary representations for $\M$ on $\rC^*(\M)$, and is thus independent of $\Delta$. This ideal is called the \emph{Shilov ideal} of $\M$ in $\rC^*(\M)$.  By \cite[Proposition 2.2.4 and Theorem 2.2.5]{arveson1969} we conclude that $\rC^*_e(\M)$ is $*$-isomorphic to $\rC^*(\M)/\Sigma_\Delta$. For this reason, we usually think of the collection of boundary representations of $\M$ as the \emph{non-commutative Choquet boundary} of $\M$. 
We also note here that the Shilov ideal of $\M$ in $\rC^*_e(\M)$ is trivial \cite[Proposition 2.2.4]{arveson1969}.

If $\beta$ is a boundary representation for $\M$ on $\rC^*(\M)$, then $\ker \beta$ contains the Shilov ideal by \cite[Proposition 2.2.3]{arveson1969}. Therefore, there is an irreducible $*$-representation $\tilde\beta$ on $\rC^*_e(\M)$ such that $\beta=\tilde\beta\circ q$, where $q$ is the quotient map on $\rC^*(\M)$ corresponding to the Shilov ideal. In other words, a boundary representation always ``factors through the $\rC^*$-envelope".

 The next observation is simple but useful.

\begin{lemma}\label{L:bdrysepequiv}
Let $\M$ be a unital operator space and let $\Delta$ be a set of irreducible $*$-representations of $\rC^*_e(\M)$. Then, $\Delta$ is completely norming for $\M$ if and only if it is separating for $\rC^*_e(\M)$. In particular, there is a set of boundary representations for $\M$ on $\rC^*_e(\M)$ which is separating for $\rC^*_e(\M)$.
\end{lemma}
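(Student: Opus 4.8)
The plan is to work throughout with the single $*$-representation $\Phi = \bigoplus_{\pi \in \Delta} \pi$ of $\rC^*_e(\M)$. By definition, $\Delta$ is completely norming for $\M$ precisely when $\Phi$ restricts to a complete isometry on $\M$, while $\Delta$ is separating for $\rC^*_e(\M)$ precisely when $\Phi$ is injective. Thus the equivalence to be proved is: $\Phi|_\M$ is completely isometric if and only if $\Phi$ is injective.

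The reverse implication is immediate from elementary $\rC^*$-theory: an injective $*$-homomorphism between $\rC^*$-algebras is automatically completely isometric, and restricting this complete isometry to $\M$ shows that $\Delta$ is completely norming.

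For the forward implication, which I expect to be the crux, I would exploit the minimality of the $\rC^*$-envelope via its universal property. Assume $\Phi|_\M$ is completely isometric. Then the pair $(\rC^*(\Phi(\M)), \Phi|_\M)$ is a $\rC^*$-extension of $\M$, since $\Phi|_\M$ is unital completely isometric and $\rC^*(\Phi(\M)) = \Phi(\rC^*_e(\M))$. The universal property of $\rC^*_e(\M)$ then furnishes a surjective $*$-homomorphism $\rho: \rC^*(\Phi(\M)) \to \rC^*_e(\M)$ with $\rho \circ \Phi|_\M = \id_\M$. Now $\rho \circ \Phi$ is a $*$-homomorphism of $\rC^*_e(\M)$ into itself that fixes $\M$ pointwise; since $\M$ generates $\rC^*_e(\M)$ as a $\rC^*$-algebra, it follows that $\rho \circ \Phi = \id$. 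In particular $\Phi$ has a left inverse and is therefore injective, so $\Delta$ is separating. The main obstacle here is the careful bookkeeping in applying the universal property: one must verify that $(\rC^*(\Phi(\M)), \Phi|_\M)$ genuinely is a $\rC^*$-extension and then identify the resulting map $\rho$ as a left inverse of $\Phi$.

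For the final ``in particular'' claim, I would represent $\M$ inside $\rC^*_e(\M)$, where the Shilov ideal is trivial and hence $\rC^*(\M) = \rC^*_e(\M)$. Applying Theorem \ref{T:bdryexist} for every $n \in \bN$ and every $A \in \bM_n(\M)$ produces a boundary representation for $\M$ on $\rC^*_e(\M)$ attaining the norm of $A$. Collecting all of these into a set $\Delta$ renders $\bigoplus_{\beta \in \Delta} \beta$ completely isometric on $\M$, that is, $\Delta$ is completely norming; the equivalence just established then shows that $\Delta$ is separating for $\rC^*_e(\M)$, as desired.
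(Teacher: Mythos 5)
Your proof is correct. The reverse implication and the final ``in particular'' step are exactly as in the paper: injective $*$-homomorphisms are completely isometric, and Theorem \ref{T:bdryexist} applied to every $A\in\bM_n(\M)$ yields a completely norming family of boundary representations. Where you diverge is the forward implication. The paper's argument is: if $\Delta$ is completely norming, then $\ker\bigl(\bigoplus_{\pi\in\Delta}\pi\bigr)$ is a boundary ideal, hence contained in the Shilov ideal of $\M$ in $\rC^*_e(\M)$, which the paper has already recorded as being trivial (citing Arveson); so the direct sum is injective. You instead observe that $\bigl(\Phi(\rC^*_e(\M)),\Phi|_\M\bigr)$ is a $\rC^*$-extension and invoke the universal property of the envelope to manufacture a $*$-homomorphism $\rho$ with $\rho\circ\Phi=\id$ on $\M$, hence on all of $\rC^*_e(\M)$ since $\M$ generates, giving a left inverse for $\Phi$. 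The two routes are morally the same expression of minimality of the $\rC^*$-envelope, but yours is self-contained modulo the universal property itself, whereas the paper's leans on the previously established triviality of the Shilov ideal; yours costs a little more bookkeeping (checking that $\Phi|_\M$ is unital completely isometric and that $\Phi(\rC^*_e(\M))=\rC^*(\Phi(\M))$, both of which hold) but avoids the boundary-ideal formalism entirely. Both are valid.
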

\begin{proof}
Let $\sigma=\bigoplus_{\pi\in \Delta}\pi$. If $\Delta$ is completely norming for $\M$, then $\ker \sigma$ is the Shilov ideal of $\M$ in $\rC^*_e(\M)$, which is trivial as mentioned above. Thus, $\sigma$ is injective and hence $\Delta$ is separating for $\rC^*_e(\M)$. Conversely, if $\Delta$ is separating for $\rC^*_e(\M)$ then $\sigma$ is injective and hence completely isometric. The second statement follows from the first and from Theorem \ref{T:bdryexist}.
\end{proof}

In the classical situation of a unital subspace $\M$ of continuous functions on a compact Hausdorff space, if $\M$ separates the points then it is known that the Choquet boundary is always dense in the Shilov boundary \cite[Proposition 6.4]{phelps2001}. There are several reasonable candidates for a non-commutative analogue of the Shilov boundary, and for all of them we have a similar density property.

\begin{theorem}\label{T:density}
Let $\M$ be a unital operator space and let $\B\subset \widehat{\rC^*_e(\M)}$ denote the set of unitary equivalence classes of boundary representations for $\M$. Then, the following statements hold.
\begin{enumerate}[{\rm (i)}]
\item Let $\S_\B(\rC^*_e(\M))$ denote the set of states on $\rC^*_e(\M)$ whose GNS representation is a boundary representation for $\M$. Then, $\S_\B(\rC^*_e(\M))$ is weak-$*$ dense in the set of pure states on $\rC^*_e(\M)$.
\item The set
$
\{\ker \beta:[\beta]\in \B\}
$
is dense in $\Prim (\rC^*_e(\M))$.

\item The set $\B$ is dense in  $\widehat{\rC^*_e(\M)}$.
\end{enumerate}
\end{theorem}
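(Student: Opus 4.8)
The plan is to reduce all three statements to the single assertion that the boundary representations for $\M$ separate $\rC^*_e(\M)$, that is, that
\[
\bigcap_{[\beta]\in \B}\ker\beta=\{0\}.
\]
First I would obtain this from Lemma \ref{L:bdrysepequiv}: that result furnishes a subset $\Delta\subset \B$ which is separating for $\rC^*_e(\M)$, so that $\bigoplus_{[\beta]\in \Delta}\beta$ is injective and hence $\bigcap_{[\beta]\in\Delta}\ker\beta=\{0\}$. Since $\Delta\subset\B$, the displayed equality follows immediately.

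With this in hand, part (ii) is essentially a restatement. As recalled in Subsection \ref{SS:repC*}, a family of primitive ideals is dense in $\Prim(\rC^*_e(\M))$ precisely when its intersection is $\{0\}$; applying this to $\{\ker\beta:[\beta]\in\B\}$ gives (ii) at once. For part (iii), I would use that the topology on $\widehat{\rC^*_e(\M)}$ is defined by requiring the natural map $[\pi]\mapsto\ker\pi$ into $\Prim(\rC^*_e(\M))$ to be continuous, so that this topology is pulled back along a single map. Consequently the closure of $\B$ in $\widehat{\rC^*_e(\M)}$ consists of those $[\sigma]$ with $\ker\sigma\supseteq\bigcap_{[\beta]\in\B}\ker\beta$. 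Since this intersection is trivial, every class $[\sigma]$ lies in the closure and $\B$ is dense.

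Finally, part (i) I would deduce from part (iii) together with Lemma \ref{L:densestaterep} applied to $\fA=\rC^*_e(\M)$ and $\Delta=\B$. Before invoking the lemma, I would verify that the set $\S_\B(\rC^*_e(\M))$ defined in the theorem coincides with the set $\S_\Delta(\fA)$ appearing in the lemma: a state $\psi$ has GNS representation equal to a boundary representation exactly when $[\sigma_\psi]\in\B$, and since boundary representations are irreducible, every such $\psi$ is automatically pure. The density of $\B$ established in (iii) then yields, via the lemma, that $\S_\B(\rC^*_e(\M))$ is weak-$*$ dense in the pure states.

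The genuine content is already contained in the earlier results---Theorem \ref{T:bdryexist}, which guarantees enough boundary representations to completely norm $\M$, and Lemma \ref{L:bdrysepequiv}---so the work here is largely organizational. The one point demanding care is the correct translation of the three distinct flavours of density (in $\Prim(\rC^*_e(\M))$, in $\widehat{\rC^*_e(\M)}$, and weak-$*$ among states) into the common kernel-intersection condition, together with the bookkeeping verification that the two descriptions of $\S_\B(\rC^*_e(\M))$ agree so that Lemma \ref{L:densestaterep} applies verbatim.
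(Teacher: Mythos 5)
Your proposal is correct and follows essentially the same route as the paper: deduce (ii) from Lemma \ref{L:bdrysepequiv} together with the characterization of density in $\Prim(\rC^*_e(\M))$ via trivial kernel intersection, pass to (iii) using the definition of the topology on $\widehat{\rC^*_e(\M)}$, and obtain (i) from (iii) via Lemma \ref{L:densestaterep}. The extra bookkeeping you include (identifying $\S_\B$ with the $\S_\Delta$ of the lemma, and spelling out the closure in the pullback topology) is sound and merely makes explicit what the paper leaves implicit.
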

\begin{proof}
Statement (ii) follows from the definition of the Jacobson topology on the primitive ideal space along with Lemma \ref{L:bdrysepequiv}. Furthermore, by definition of the topology on $\widehat{\rC^*_e(\M)}$, (ii) immediately implies (iii). Finally, (i) is always equivalent to (iii) by Lemma \ref{L:densestaterep}.
\end{proof}

%%%%%%%%%%%%%%%%%%%%%%%%%%%%%%%%%%%%%%%%%%%%%%%%%%%%%%%%%%%%%%%%%%%%%%%%%%%
%%%%%%%%%%%%%%%%%%%%%%%%%%%%%%%%%%%%%%%%%%%%%%%%%%%%%%%%%%%%%%%%%%%%%%%%%%%
%%%%%%%%%%%%%%%%%%%%%%%%%%%%%%%%%%%%%%%%%%%%%%%%%%%%%%%%%%%%%%%%%%%%%%%%%%%
%%%%%%%%%%%%%%%%%%%%%%%%%%%%%%%%%%%%%%%%%%%%%%%%%%%%%%%%%%%%%%%%%%%%%%%%%%%
%%%%%%%%%%%%%%%%%%%%%%%%%%%%%%%%%%%%%%%%%%%%%%%%%%%%%%%%%%%%

\section{Non-commutative peak points}\label{S:ncpeakpoints}

The goal of this section is to introduce an important tool to detect ``isolated" irreducible $*$-representations, and ultimately to exhibit finite-dimensional boundary representations. The rough idea is to view certain irreducible $*$-representations as analogues of classical peak points. Let us be more precise.

 Let $\fA$ be a $\rC^*$-algebra and let  $\pi:\fA\to B(\H_\pi)$ be an  irreducible $*$-representation. Then, $\pi$ is said to be a \emph{peaking representation for $\fA$} if there is $n\in \bN$ and $T\in \bM_n(\fA)$ with the property that for every irreducible $*$-representation $\sigma:\fA\to B(\H_\sigma)$ not unitarily equivalent to $\pi$ we have
 \[
\|\pi^{(n)}(T)\|> \|\sigma^{(n)}(T)\|.
\]
In this case, we say that $T$ \emph{peaks} at $\pi$.   For our purposes, we will need a slight weakening of this notion, which in a sense is ``local". We say that $T$ \emph{peaks locally at $\pi$} if for every irreducible $*$-representation $\sigma:\fA\to B(\H_\sigma)$ not unitarily equivalent to $\pi$ and every finite-dimensional subspace $\F\subset \H_\sigma$  we have
\[
\|\pi^{(n)}(T)\|> \|P_{\F^{(n)}}\sigma^{(n)}(T)|_{\F^{(n)}}\|.
\]
 If $\S\subset \fA$ is a subset and there is $n\in \bN$ along with $T\in \bM_n(\S)$ such that $T$ peaks locally at $\pi$, then $\pi$ is said to be a \emph{locally peaking representation for $\S$}. 
 
 As a consequence of Theorem \ref{T:peakprojlocpeak} below, we will also exhibit an example of a natural $\rC^*$-algebra for which there is a locally peaking representation but there are no peaking representations (see Example \ref{E:locpeak}). Hence, our local notion is genuinely more flexible than the original one.

The reader may wonder why unitary equivalence is used instead of weak equivalence in the previous definitions; this alternative would also encode some kind of peaking phenomenon. There are three reasons motivating our choice. First, what we are hoping to accomplish using locally peaking representations is to identify boundary representations (see Theorem \ref{T:peakingrep}). Using weak equivalence rather than unitary equivalence in our definition would result in a class of representations which does not necessarily consist entirely of boundary representations.  Indeed, this can be inferred from  \cite[Example 6.6.3]{DK2019}; the authors are grateful to Matthew Kennedy for pointing this out. Second, guided by Question \ref{Q:main}, in this paper our focus is on finite-dimensional representations, in which case unitary equivalence coincides with weak equivalence anyway, by Lemma \ref{L:fdimweakcontain}. Therefore, the proposed alternative version of (locally) peaking projections would reduce to the original one in our case of interest. The third reason is one of consistency: as mentioned above, previous occurrences of peaking representations have used unitary equivalence \cite{arveson2011},\cite{NPSV2018},\cite{CDHLZ2019},\cite{DP2020} so we opt here to follow this trend. 

We now arrive at an important result, which illustrates  why the notion of peaking representation is relevant for Question \ref{Q:main}.  The reader should compare it with \cite[Theorem 7.2]{arveson2011} (see also \cite[Theorem 5.2]{NPSV2018} for a related result).

\begin{theorem}\label{T:peakingrep}
Let $\A\subset B(\H)$ be a unital operator algebra  and let $\pi$ be an  irreducible $*$-representation of $\rC^*(\A)$. Assume that $\pi$ is a locally peaking representation for $\A$. Then, $\pi$ is a boundary representation for $\A$.
\end{theorem}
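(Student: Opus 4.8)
The plan is to argue by contradiction, pitting the norming boundary representation supplied by Theorem \ref{T:bdryexist} against the local peaking inequality. Let $n\in\bN$ and $T\in \bM_n(\A)$ be such that $T$ peaks locally at $\pi$, and set $c=\|\pi^{(n)}(T)\|$. Since $\pi$ is a $*$-representation of $\rC^*(\A)$, it is completely contractive, so immediately $c\le \|T\|$, where $\|T\|$ is computed in $\bM_n(\A)$.

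First I would invoke Theorem \ref{T:bdryexist} with $\M=\A$ and $A=T$. This produces a boundary representation $\beta:\rC^*(\A)\to B(\H_\beta)$ for $\A$ together with a finite-dimensional subspace $\F\subset \H_\beta$ such that the associated matrix state $\psi(b)=P_\F\beta(b)|_\F$ satisfies $\|\psi^{(n)}(T)\|=\|T\|$. Because the ampliation of a compression is the compression of the ampliation, this says precisely that
\[
\|P_{\F^{(n)}}\beta^{(n)}(T)|_{\F^{(n)}}\|=\|T\|.
\]
The entire argument now hinges on ruling out the possibility that $\beta$ is not unitarily equivalent to $\pi$.

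Suppose then, toward a contradiction, that $\beta$ is not unitarily equivalent to $\pi$. Applying the local peaking hypothesis with $\sigma=\beta$ and with this particular finite-dimensional subspace $\F\subset \H_\beta$ yields
\[
\|\pi^{(n)}(T)\|>\|P_{\F^{(n)}}\beta^{(n)}(T)|_{\F^{(n)}}\|=\|T\|,
\]
which contradicts $c=\|\pi^{(n)}(T)\|\le \|T\|$. Hence $\beta$ is unitarily equivalent to $\pi$. Since unitary equivalence preserves both irreducibility and the unique extension property with respect to $\A$, it follows that $\pi$ is itself a boundary representation for $\A$, as claimed.

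I expect the only genuinely delicate point to be conceptual rather than computational: recognizing that the \emph{local} formulation of peaking, which quantifies over finite-dimensional subspaces of $\H_\sigma$, is tailored exactly to the output of Theorem \ref{T:bdryexist}. That theorem furnishes not merely a norming boundary representation $\beta$ but a norming finite-dimensional compression of it, and it is this matching that drives the proof. A version of peaking comparing $\|\pi^{(n)}(T)\|$ only with the full norms $\|\sigma^{(n)}(T)\|$ would not interact as cleanly with the matrix-state refinement in Theorem \ref{T:bdryexist}, which is precisely why the local notion, though formally weaker, is the right one here.
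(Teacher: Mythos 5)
Your proof is correct and follows essentially the same route as the paper's: both apply Theorem \ref{T:bdryexist} to the locally peaking element to obtain a boundary representation $\beta$ with a norming finite-dimensional compression, and then use the local peaking inequality to force $\beta$ to be unitarily equivalent to $\pi$. The only cosmetic difference is that you phrase the final step as a proof by contradiction, whereas the paper states the conclusion directly.
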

\begin{proof}
There is $n\in \bN$ along with $A\in \bM_n(\A)$ such that
\[
\|A\|\geq \|\pi^{(n)}(A)\|> \|P_{\F^{(n)}}\sigma^{(n)}(A)|_{\F^{(n)}}\|
\]
for every irreducible $*$-representation $\sigma:\rC^*(\A)\to B(\H_\sigma)$ not unitarily equivalent to $\pi$ and every finite-dimensional subspace $\F\subset \H_\sigma$. 
In view of Theorem \ref{T:bdryexist}, there is a boundary representation $\beta:\rC^*(\A)\to B(\H_\beta)$ for $\A$ and a finite-dimensional subspace $\G\subset \H_\beta$ such that 
\[
\|P_{\G^{(n)}}\beta^{(n)}(A)|_{\G^{(n)}}\|=\|A\|. 
\]
Necessarily, this implies that $\pi$ is unitarily equivalent to $\beta$, and hence is a boundary representation.
\end{proof}

In light of this result, it is desirable to efficiently identify locally peaking representations. This will be accomplished in the following subsection with the aid of certain special projections.

\subsection{Peaking projections}\label{SS:peakproj}
As part of our overarching analogy with classical peak points, the previous developments replaced points with irreducible $*$-representations to arrive at the notion of peaking representations. Another non-commu\-tative interpretation of peak points replaces points with projections in the bidual; these should be thought of as analogues of characteristic functions.

In this subsection, we show how certain peaking projections can be used to produce locally peaking representations. This is beneficial for us, as peaking projections have been thoroughly studied since the appearance of \cite{hay2007} (see also \cite{blecher2013} and the references therein). In particular, there are known deep results that characterize these projections via concrete conditions that we will manage to verify in some cases.

We recall the definition (see \cite[Definition 3.5 and Theorem 5.1]{hay2007}).
A closed projection $q\in \fA^{**}$ is a \emph{peaking projection} if there is a contraction $t\in \fA$ with the property that $tq=q$ and $\|tp\|<1$ whenever $p\in \fA^{**}$ is a closed projection with $pq=0$. In this case, we say that $t$ \emph{peaks} at $q$. Note then that necessarily $\|t\|=1$. In addition, if there is a $*$-representation $\pi$ of $\fA$ such that $\fs_\pi=q$, then 
\[
\pi(t)=\pi^{**}(t\fs_\pi )=\pi^{**}(\fs_\pi )=I.
\]
We aim to give a refinement of a deep non-commutative counterpart to the classical Glicksberg peak point theorem \cite[Theorem II.12.7]{gamelin1969} based on non-trivial work of Hay \cite{hay2007}, Blecher--Hay--Neal \cite{BHN2008} and Read \cite{read2011}. The sharper conclusion that we obtain upon specializing to the separable setting does not seem to have been previously recorded, although it may have been known to experts. The fact that separability allows for a cleaner result is a reflection of the familiar fact that the theory of peak points for uniform algebras is much more definitive in the case where the underlying compact Hausdorff space is metrizable; see for instance \cite[Lemma II.12.2]{gamelin1969}.

\begin{theorem}\label{T:peakOA}
Let $\A\subset B(\H)$ be a separable unital operator algebra and let $q\in \A^{\perp\perp}$ be a closed projection. Then, there is an element $t\in \A$ peaking at $q$ such that $1/4 I\leq t^*t\leq I$. Furthermore, $t$ has the property that $\|\widehat\psi(q)\|=1$ whenever $\psi$ is a matrix state on $\rC^*(\A)$ satisfying $\|\psi(t^*t)\|=1$. 
\end{theorem}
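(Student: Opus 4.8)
The plan is to first manufacture a single peaking contraction in $\A$, then adjust it to acquire the spectral bound $\tfrac14 I\le t^*t\le I$, and finally extract the matrix-state consequence through a Stinespring reduction tailored to the explicit shape of $t$.

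\textbf{Existence of a peaking contraction.} For the first step I would feed the closed projection $q\in\A^{\perp\perp}$ into the circle of deep results of Hay, Blecher--Hay--Neal and Read: these guarantee that $q$ is a peak projection, so there is a contraction $a\in\A$ with $aq=q$ and $\|ap\|<1$ for every closed projection $p\in\A^{**}$ orthogonal to $q$. The separability hypothesis is exactly what upgrades the general (non-separable) conclusion, where one only knows $q$ to be a decreasing limit of peak projections, to the statement that $q$ is itself the peak of a single element; this mirrors the metrizable uniform-algebra situation in which closed $G_\delta$ sets are peak sets. I expect this to be where the heaviest machinery is invoked, citing \cite{hay2007}, \cite{BHN2008}, \cite{read2011}.

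\textbf{Construction of $t$.} I would then set $t=\tfrac34 I+\tfrac14 a$, the operator analogue of the scalar map $z\mapsto(3+z)/4$, which carries $\ol{\bD}$ into the annulus $\{1/2\le|w|\le 1\}$. It is immediate that $t\in\A$, that $tq=\tfrac34 q+\tfrac14 aq=q$, and that $\|t\|\le 1$, whence $t^*t\le I$. For the lower bound, given a unit vector $\xi$ write $u=\re\langle a\xi,\xi\rangle$; using $\|a\xi\|^2\ge u^2$ and $u\ge -1$ one gets $\|(3I+a)\xi\|^2=9+6u+\|a\xi\|^2\ge (u+3)^2\ge 4$, so $t^*t\ge\tfrac14 I$. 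Peaking is preserved, since for a closed projection $p\perp q$ one has $\|tp\|=\tfrac14\|(3I+a)p\|\le\tfrac14(3+\|ap\|)<1$. These are routine estimates.

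\textbf{The matrix-state property.} Suppose $\psi$ is a matrix state on $\rC^*(\A)$ with $\|\psi(t^*t)\|=1$. As $t^*t\le I$, the positive contraction $\psi(t^*t)$ attains its norm on the finite-dimensional space and thus fixes a unit vector $\eta$. Dilating to a minimal Stinespring representation $\psi=V^*\sigma(\cdot)V$ with $\sigma:\rC^*(\A)\to B(\K)$ and $V$ an isometry, and setting $\xi=V\eta$, the identity $\langle\sigma(t^*t)\xi,\xi\rangle=1$ forces $\|\sigma(t)\xi\|=\|\xi\|=1$. The special form of $t$ now pins everything down: the equality case of $9+6\re\langle\sigma(a)\xi,\xi\rangle+\|\sigma(a)\xi\|^2=16$ requires $\re\langle\sigma(a)\xi,\xi\rangle=\|\sigma(a)\xi\|=1$, hence $\sigma(a)\xi=\xi$. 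I would then invoke that a peaking element satisfies $q=\wslim a^n$ in $\rC^*(\A)^{**}$; applying the normal $*$-homomorphism $\widehat\sigma$ gives $\sigma(a)^n\to\widehat\sigma(q)$ weak-$*$ in $B(\K)$, and testing against the normal functional $x\mapsto\langle x\xi,\xi\rangle$ (using $\sigma(a)^n\xi=\xi$) yields $\langle\widehat\sigma(q)\xi,\xi\rangle=1$, so $\widehat\sigma(q)\xi=\xi$. Since $\widehat\psi(q)=V^*\widehat\sigma(q)V$ by uniqueness of the weak-$*$ continuous extension, we conclude $\|\widehat\psi(q)\|\ge\langle\widehat\sigma(q)\xi,\xi\rangle=1$, while the reverse bound $\|\widehat\psi(q)\|\le 1$ is trivial from $0\le q\le I$. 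The main obstacle I anticipate is precisely this transfer from the abstract peaking inequality in $\A^{\perp\perp}$ to a concrete statement inside $\sigma$: establishing $q=\wslim a^n$ and justifying that $\widehat\sigma$ transports this weak-$*$ limit is the delicate conceptual point, and will have to be drawn carefully from the peak-projection literature.
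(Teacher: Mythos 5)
Your second and third steps are essentially sound, but the first step contains the real gap: you take as given, from the Hay/Blecher--Hay--Neal/Read literature, that a closed projection $q\in\A^{\perp\perp}$ over a \emph{separable} algebra is the peak of a single contraction $a\in\A$. That existence statement is precisely the content being established here --- the paper explicitly notes that the known results only give $q$ as a decreasing weak-$*$ limit of peak projections, and that the single-element separable upgrade ``does not seem to have been previously recorded''; indeed, Corollary \ref{C:peakOAchar} (the iff characterization you are implicitly invoking) is \emph{deduced from} Theorem \ref{T:peakOA}, so your argument is circular relative to the paper's development. The actual proof constructs $t$ from scratch: by Lemma \ref{L:Haytwosided} the ideal $(I-q)\A^{**}(I-q)\cap\A$ has a contractive approximate unit, Read's theorem upgrades it to one with $\|I-b_j\|\leq 1$, separability extracts a countable dense subfamily $\{c_n\}$, and one sets $t=\tfrac12 I+\sum_n 2^{-(n+1)}(I-c_n)$. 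The direct estimates on this series give $tq=q$, $t^*t\geq \tfrac14 I$, and the matrix-state property, and only \emph{then} is Hay's Theorem 5.1 invoked to conclude that $t$ peaks at $q$. In other words, the logical flow is reversed: the state condition is verified first and peaking is a consequence, whereas you assume peaking and try to derive the state condition.

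Granting a peaking element $a$, your post-processing is correct and rather clean: $t=\tfrac34 I+\tfrac14 a$ does satisfy $tq=q$, $\tfrac14 I\leq t^*t\leq I$, and $\|tp\|\leq \tfrac34+\tfrac14\|ap\|<1$ for closed $p\perp q$, and the equality analysis $9+6\re\langle\sigma(a)\xi,\xi\rangle+\|\sigma(a)\xi\|^2=16\Rightarrow\sigma(a)\xi=\xi$ is valid. Two caveats remain even here. First, the identity $q=\wslim_n a^n$ for a peaking element is not automatic from the definition used in this paper ($aq=q$ and $\|ap\|<1$ for closed $p\perp q$); it must be extracted from the equivalences in Hay's Theorem 5.1 (where the limit is typically phrased via $(a^*a)^n$ --- your argument survives that substitution since $\sigma(a)\xi=\xi$ also forces $\sigma(a^*a)\xi=\xi$), so this citation needs to be made precise rather than gestured at. Second, note that your route, unlike the paper's, would not by itself yield a self-contained proof of Corollary \ref{C:peakOAchar}, which is the downstream payoff of the theorem.
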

\begin{proof}
We may assume that $q\notin \A$, for otherwise we may simply choose $t=q$. 
By Lemma \ref{L:Haytwosided}, the algebra $(I-q)\A^{**} (I-q)\cap \A$ has a contractive approximate unit. By \cite[Theorem 1.1]{read2011}, there is in fact a contractive approximate unit $(b_j)_{j\in J}$ such that $\|I-b_j\|\leq 1$ for every $j\in J$.  Observe that $ b_j(I-q) =b_j$ or $b_jq =0$ for every $j\in J$. Next, because $\A$ is norm separable, so is the subset $\{b_j:j\in J\}$. Let $\Gamma\subset \{b_j:j\in J\}$ be a countable dense subset. Note that $q\notin \A$ and $(b_j)$ converges to $I-q$ in the weak-$*$ topology of $B(\H)^{**}$ \cite[Proposition 2.5.8]{BLM2004}, so that $\Gamma$ is infinite and we write $\Gamma=\{c_n:n\in \bN\}$. Since $(I-c_n)^*(I-c_n)\leq I$, we infer $\re c_n\geq 0$ for every $n\geq 1$. Define 
\[t=\frac{1}{2}I+\sum_{n=1}^\infty \frac{1}{2^{n+1}}(I-c_n)\in \A.\] 
We conclude that $\|t\|\leq 1$ and $tq=q$. Note that 
\begin{align*}
t^*t=\frac{1}{4}I+\sum_{n=1}^\infty \frac{1}{2^{n+1}}(I-\re c_n)+\left( \sum_{n=1}^\infty \frac{1}{2^{n+1}}(I-c_n)\right)^*\left(\sum_{n=1}^\infty \frac{1}{2^{n+1}}(I-c_n)\right).
\end{align*}
Since $c_n$ is a contraction, we see that $I-\re c_n\geq 0$ for every $n\geq 1$, whence $t^*t\geq 1/4 I$.
Moreover, if $\psi$ is a matrix state on $\rC^*(\A)$ then 
\[
0\leq I-\psi(\re c_n)\leq I, \quad n\geq 1
\]
whence
\begin{align*}
\|\psi(t^*t)\|&\leq \frac{1}{4}+\sum_{n=1}^\infty \frac{1}{2^{n+1}}\|I-\psi(\re c_n)\|+\left(\sum_{n=1}^\infty \frac{1}{2^{n+1}}\right)^2\\
&\leq \frac{1}{4}+\sum_{n=1}^\infty \frac{1}{2^{n+1}}+\frac{1}{4}=1.
\end{align*}
Hence $\|\psi(t^*t)\|=1$ holds only when $\|1- \psi(\re c_n)\|=1$ for every $n\geq 1$. Because $(b_j)$ converges to $I-q$ in the weak-$*$ topology of $B(\H)^{**}$, there is a subsequence of $(\psi(c_n))$ that converges to $\widehat{\psi}(I-q)\geq 0$  in \emph{norm}, seeing as $\psi$ is a matrix state. Consequently, a subsequence of $(\psi(\re c_n))$ converges to $\widehat{\psi}(I-q)$ in norm, and
\[
\|\widehat{\psi}(q)\|\geq \liminf_{n\to\infty} \|I-\psi(\re c_n)\|.
\] 
We conclude that $\|\widehat{\psi}(q)\|=1$ whenever $\|\psi(t^*t)\|=1$. Invoking  \cite[Theorem 5.1]{hay2007} we see that $t$ peaks at $q$.
\end{proof}

The previous result allows us to give characterizations, in the separable setting, of peaking projections.

\begin{corollary}\label{C:peakOAchar}
Let $\A\subset B(\H)$ be a separable unital operator algebra and let $q\in B(\H)^{**}$ be a closed projection. Then, there is a contraction $t\in \A$ that peaks at $q$ if and only if $q\in \A^{\perp\perp}$.
\end{corollary}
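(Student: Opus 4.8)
The statement is an equivalence, and one of its two implications is already in hand. Indeed, if $q$ is a closed projection lying in $\A^{\perp\perp}$, then Theorem \ref{T:peakOA} directly produces a contraction $t\in\A$ peaking at $q$ (with additional control that is not needed here). So the only remaining content is the forward implication: if some contraction $t\in\A$ peaks at $q$, then $q\in\A^{\perp\perp}$. I would emphasize that this direction, unlike the other, should require no separability hypothesis. Separability is exactly what powers the existence half (Theorem \ref{T:peakOA}), mirroring the classical fact that a closed boundary point need not be a peak point without a metrizability, i.e.\ $G_\delta$, assumption; the forward implication, by contrast, is a general feature of the non-commutative peak-projection theory.

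For the forward implication, the plan is to realize $q$ as a weak-$*$ limit of elements of $\A$ and then exploit that $\A^{\perp\perp}$ is weak-$*$ closed in $B(\H)^{**}$. Concretely, $\A^{\perp\perp}$ is the bipolar of $\A$, hence coincides with the weak-$*$ closure of $\A$ in $B(\H)^{**}$; in particular it is closed under weak-$*$ limits of sequences drawn from $\A$. It therefore suffices to show that the powers $t^n\in\A$ converge, in the weak-$*$ topology of $B(\H)^{**}$, to $q$. This is precisely where I would invoke the theory of Hay: the peaking hypotheses $tq=q$ and $\|tp\|<1$ for every closed projection $p$ orthogonal to $q$ force $q$ to be the peak projection associated with $t$, namely the weak-$*$ limit of $(t^n)$. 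Granting this identification, each $t^n$ lies in $\A\subset\A^{\perp\perp}$, and weak-$*$ closedness of $\A^{\perp\perp}$ then yields $q\in\A^{\perp\perp}$ at once.

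The one point that must be handled with care, and which I expect to be the genuine obstacle, is matching our definition of ``$t$ peaks at $q$'' (phrased through $tq=q$ and the norm bound on $\|tp\|$) to Hay's notion of peak projection, so that the limiting description $q=\lim_n t^n$ in the weak-$*$ topology genuinely applies. This is the substance of \cite[Theorem 5.1]{hay2007}, together with the surrounding development in \cite{BHN2008},\cite{blecher2013}, which supplies the dictionary between the peaking condition and this weak-$*$ limit. I would not attempt to reprove that dictionary; rather, once it is invoked, the conclusion follows immediately from the weak-$*$ closedness of the bidual, with no further estimates required. Combining this with the backward implication furnished by Theorem \ref{T:peakOA} completes the equivalence.
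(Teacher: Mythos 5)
Your proof is correct and follows essentially the same route as the paper: the backward implication is exactly Theorem \ref{T:peakOA}, and the forward implication is an appeal to Hay's peak-projection theory (the paper cites \cite[Proposition 5.6]{hay2007}, which is precisely the statement you reconstruct, namely that the peaking condition forces $q$ to be the weak-$*$ limit of the powers $t^n\in\A$ and hence to lie in the weak-$*$ closed subspace $\A^{\perp\perp}$). Your observation that separability is only needed for the existence direction is also consistent with the paper's argument.
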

\begin{proof}
This follows upon combining \cite[Proposition 5.6]{hay2007} with Theorem \ref{T:peakOA}.
\end{proof}

\subsection{Peaking support projections}\label{SS:peakprojbdryi}
We now return to our analysis of peaking representations. Our first task is to show that irreducible $*$-representations with a peaking support projection must be locally peaking.
This is consistent with the classical commutative setting. Indeed, let $X$ be a compact metric space and let $\A\subset C(X)$ be a uniform algebra. For $\xi\in X$, we let $\pi_\xi:C(X)\to \bC$ be the character of evaluation at $\xi$. It is well known that $\fs_{\pi_\xi}\fs_{\pi_{\xi'}}=0$ whenever $\xi\neq \xi'$ (see for instance \cite[Proposition 2.3]{clouatre2018lochyp}). In particular, we see that if some contraction $a\in \A$ peaks at $\fs_{\pi_\xi}$, then $\xi$ is a peak point for the function $a$, which is equivalent to $a$ peaking at $\pi_\xi$. A similar statement holds in the general non-commutative context, at least locally.

\begin{theorem}\label{T:peakprojlocpeak}
Let $\A\subset B(\H)$ be a separable unital operator algebra and let $\pi$ be a finite-dimensional irreducible $*$-representation of $\rC^*(\A)$ such that $\fs_\pi\in \A^{\perp\perp}$. Then, $\pi$ is a locally peaking representation for $\A$.
\end{theorem}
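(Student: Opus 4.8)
The goal is to show that a finite-dimensional irreducible $*$-representation $\pi$ of $\rC^*(\A)$ with support projection $\fs_\pi \in \A^{\perp\perp}$ is locally peaking. By the very definition, I must produce some $n \in \bN$ and $T \in \bM_n(\A)$ that peaks locally at $\pi$. The natural candidate for $T$ is the element furnished by the peaking machinery developed in the previous subsection. Since $\fs_\pi$ is a closed projection (being the support projection of a $*$-representation, as noted after Lemma~\ref{L:Haytwosided}) and since $\fs_\pi \in \A^{\perp\perp}$, Theorem~\ref{T:peakOA} applies: there is a contraction $t \in \A$ peaking at $\fs_\pi$, with the extra property that $\|\widehat\psi(\fs_\pi)\| = 1$ whenever $\psi$ is a matrix state on $\rC^*(\A)$ with $\|\psi(t^*t)\|=1$, and moreover $\tfrac14 I \le t^*t \le I$.

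**The plan for the peaking inequality.**
The plan is to use $n=1$ and $T=t$, or possibly a small power or modification of $t$, and to verify the local peaking inequality $\|\pi(t)\| > \|P_\F \sigma(t)|_\F\|$ for every irreducible $*$-representation $\sigma$ not unitarily equivalent to $\pi$ and every finite-dimensional $\F \subset \H_\sigma$. The left side is easy: since $t$ peaks at $\fs_\pi$ we have $tq=q$ for $q=\fs_\pi$, so as recorded right after the definition of peaking projection, $\pi(t)=\pi^{**}(t\fs_\pi)=\pi^{**}(\fs_\pi)=I$, giving $\|\pi(t)\|=1$. The substance is therefore to prove that the right-hand side is \emph{strictly} less than $1$. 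Suppose for contradiction that there is some $\sigma \not\cong \pi$ and a finite-dimensional $\F\subset\H_\sigma$ with $\|P_\F\sigma(t)|_\F\|=1$. Compressing $\sigma$ to $\F$ produces a matrix state $\psi$ on $\rC^*(\A)$ of the form $\psi(\cdot)=P_\F\sigma(\cdot)|_\F$, and $\|\psi(t)\|=1$. Because $t^*t \le I$ one expects $\|\psi(t)\|=1$ to force $\|\psi(t^*t)\|=1$ (via a unit-vector computation: if $\xi\in\F$ is a unit vector with $\|\psi(t)\xi\|=1$, then since $\psi(t)$ is a compression of the contraction $\sigma(t)$, the vector $\sigma(t)\xi$ must already lie in $\F$ and have norm one, which pins $\langle\psi(t^*t)\xi,\xi\rangle$ to $1$). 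Then Theorem~\ref{T:peakOA} yields $\|\widehat\psi(\fs_\pi)\|=1$.

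**Deriving the contradiction from $\sigma\not\cong\pi$.**
The crux is to turn $\|\widehat\psi(\fs_\pi)\|=1$ into a contradiction with $\sigma\not\cong\pi$. Here I would exploit that $\fe_\pi = I-\fs_\pi$ is the central projection with $\fe_\pi\fA^{**}=(\ker\pi)^{\perp\perp}$, so $\fs_\pi$ is ``concentrated'' on the kernel complement of $\pi$. Since $\psi$ is the compression of $\sigma$ and $\sigma$ is not unitarily equivalent to $\pi$, I want to show $\widehat\psi(\fs_\pi)=0$, contradicting norm one. The weak-$*$ continuous extension $\widehat\psi$ factors through $\sigma^{**}$ compressed to $\F$, and $\sigma^{**}(\fs_\pi)$ should vanish because $\fs_\pi$ lives on the support of $\pi$, which is disjoint from that of $\sigma$. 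Concretely, $\fs_\pi = I - \fe_\pi$ and $\fe_\pi \in (\ker\pi)^{\perp\perp}$; since $\pi$ is finite-dimensional and $\sigma\not\cong\pi$, Lemma~\ref{L:fdimweakcontain} gives $\sigma\not\prec\pi$, i.e. $\ker\pi\not\subset\ker\sigma$, and in fact the central supports of $\pi$ and $\sigma$ in $\rC^*(\A)^{**}$ are orthogonal; hence $\sigma^{**}(\fs_\pi)=0$ and so $\widehat\psi(\fs_\pi)=P_\F\sigma^{**}(\fs_\pi)|_\F=0$.

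**Main obstacle.**
The step I expect to be delicate is precisely justifying that $\sigma^{**}(\fs_\pi)=0$ when $\sigma\not\cong\pi$ — that is, verifying the orthogonality of the relevant central projections in the bidual. The clean input is that $\pi$ is finite-dimensional: then $\fs_\pi$ is a \emph{minimal} central projection supported on the (norm-closed, hence weak-$*$-closed) ideal $\pi^{-1}(0)^{\perp\perp}$ complement, and any irreducible $\sigma$ not weakly equivalent to $\pi$ annihilates it. I would handle this by passing to $\pi\oplus\sigma$ and using that, for finite-dimensional $\pi$, the ideal $\ker\pi$ is not contained in $\ker\sigma$, so evaluating the central projection $\fs_\pi$ under $\sigma^{**}$ gives $0$. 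A secondary technical point is confirming that the compression $\|\psi(t)\|=1$ genuinely forces $\|\psi(t^*t)\|=1$; this is where one uses that $\F$ is finite-dimensional so the supremum defining the norm is attained at a unit vector, allowing the Cauchy--Schwarz/contraction argument to be made rigorous.
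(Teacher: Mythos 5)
Your proposal is correct and follows essentially the same route as the paper: take the peaking element supplied by Theorem \ref{T:peakOA}, upgrade $\|\psi(t)\|=1$ to $\|\psi(t^*t)\|=1$ (the paper uses the Schwarz inequality where you use a unit-vector argument; both are valid since $\F$ is finite-dimensional), conclude $\|\widehat\psi(\fs_\pi)\|=1$, and then use centrality of $\fs_\pi$, irreducibility of $\sigma$, and Lemma \ref{L:fdimweakcontain} to force $\sigma$ to be unitarily equivalent to $\pi$. The only nitpick is that you should write $\widehat\sigma$ rather than $\sigma^{**}$ when compressing to $\F$, since the latter takes values in $B(\H_\sigma)^{**}$ rather than $B(\H_\sigma)$ --- a distinction the paper is explicitly careful about --- but this does not affect the substance of the argument.
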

\begin{proof}
Let $a\in \A$ be the contraction obtained from an application of Theorem \ref{T:peakOA} to the closed projection $\fs_\pi\in \A^{\perp\perp}$. In particular, $a\in \A$ peaks at $\fs_\pi$ so that
\[
1=\|a\|=\|\pi(a)\|.
\]
Let $\sigma:\rC^*(\A)\to B(\H_\sigma)$ be an irreducible $*$-representation  and let $\F\subset \H_\sigma$ be a finite-dimensional subspace. Define a matrix state $\psi:\rC^*(\A)\to B(\F)$ as
\[
\psi(t)=P_\F \sigma(t)|_\F, \quad t\in \rC^*(\A).
\]
Assume that $\|\psi(a)\|=1$. By the Schwarz inequality \cite[Proposition 3.3]{paulsen2002}, we infer that $\|\psi(a^*a)\|=1$, whence $\|\widehat\psi(\fs_\pi)\|=1$ by choice of $a$. Now, it is readily verified that
\[
\widehat\psi(\xi)=P_\F \widehat\sigma(\xi)|_\F, \quad \xi\in \rC^*(\A)^{**}.
\]
In particular, we infer that $\| \widehat\sigma(\fs_\pi)\|=1$. Since $\sigma$ is assumed to be irreducible, so is $\widehat\sigma$. In turn, since $\fs_\pi$ is central we must have that $\widehat\sigma(\fs_\pi)=I$. This implies that $\ker \pi^{**}\subset \ker \widehat\sigma$, and consequently $\ker \pi\subset \ker \sigma$. In other words, $\sigma$ is weakly contained in $\pi$. By Lemma \ref{L:fdimweakcontain} we conclude that $\pi$ is unitarily equivalent to $\sigma$. This shows that $a$ peaks locally at $\pi$.
\end{proof}

We now wish to give a non-trivial example where the previous result applies.

\begin{example}\label{E:peaking}
Let $\H=\oplus_{m=1}^\infty \bC^m$ and let $\fA=\prod_{m=1}^\infty \bM_{m}$. For each $m\in \bN$, we let $\pi_m:\fA\to \bM_m$ denote the coordinate projection.
Let $\fK\subset \fA$ denote the ideal of compact operators in $\fA$, so that $\fK=\oplus_{m=1}^\infty \bM_m$.
Fix $N\in \bN$ and let 
\[
\chi_N=(0,0,\ldots,I_N,0,\ldots)\in \fK.
\]
Let $(K_r)$ be a sequence in $\fK$ converging to $0$ in the weak-$*$ topology of $B(\H)$. 
Let $\A\subset \fA$ be a separable unital subalgebra containing  $\chi_N+K_r$ for every $r\in \bN$.  It follows from Lemma \ref{L:perpperpcompact} that $(\chi_N+K_r)$ converges to $\chi_N$ in the weak-$*$ topology of $B(\H)^{**}$, whence $\chi_N\in \A^{\perp\perp}$.

Let $\rho=\pi_N|_{\rC^*(\A)}:\rC^*(\A)\to \bM_N$.  We claim that $\chi_N=\fs_{\rho}$. First, we note that $\rho$ is finite-dimensional so that $\rho^{**}=\widehat{\rho}$. 
We find
\[
\rho^{**}(\chi_N)=\lim_{r\to\infty}\rho(\chi_N+K_r)=I
\]
whence $\fs_{\rho}\leq\chi_N$. Hence, $\fs_\rho\in \chi_N \rC^*(\A)^{**}$. 
Since $\pi_N|_{\chi_N \fA}$ is isometric, so is $(\pi^{**}_N)|_{\chi_N \fA^{**}}$. Because 
\[
\pi^{**}_N(\chi_N)=\widehat{\pi_N}(\chi_N)=\pi_N(\chi_N)=I=\widehat\rho(\fs_\rho)=\rho^{**}(\fs_\rho)=\pi^{**}_N(\fs_\rho)
\]
we must have $\fs_\rho=\chi_N$, as claimed. 

Finally, if $\A$ is chosen to be rich enough so that $\pi_N(\rC^*(\A))=\bM_N$, then $\rho$ is irreducible, and Theorem \ref{T:peakprojlocpeak} implies that $\rho$ is a locally peaking representation for $\A$. \qed
\end{example}

As another application of Theorem \ref{T:peakprojlocpeak}, we can also show how the notion of a locally peaking representation is genuinely weaker than that of a peaking representation.

\begin{example}\label{E:locpeak}
Let $H^2$ denote the classical Hardy space on the open unit disc $\bD\subset \bC$. This is the Hilbert space of holomorphic functions $f:\bD\to \bC$ that can be written as
\[
f(w)=\sum_{n=0}^\infty a_n w^n, \quad w\in \bD
\]
with \[\sum_{n=0}^\infty |a_n|^2<\infty.\] Let $A(\bD)$ denote the disc algebra, which is the closed subalgebra of $C(\ol{\bD})$ consisting of those functions that are holomorphic on $\bD$. In this example, we will freely use many classical facts about these objects. The reader is referred to \cite{hoffman1988},\cite{douglas1998},\cite{agler2002} for greater detail.

Given $\phi\in A(\bD)$, the corresponding multiplication operator $M_\phi$ on $B(H^2)$ is known to be bounded with $\|M_\phi\|=\|\phi\|$. In fact, there is a unital completely isometric homomorphism $\Theta:A(\bD)\to B(H^2)$ defined as
\[
\Theta(\phi)=M_\phi, \quad \phi\in A(\bD).
\]
Let $\fT=\rC^*(\Theta(A(\bD)))$. Then, this $\rC^*$-algebra contains the ideal of compact operators on $H^2$ and the corresponding quotient $\fT/\fK$ can be naturally identified $*$-isomorphically with $C(\bT)$ via a map sending $M_z+\fK$ to $z$ (here, by $z$ we denote the identity function). Using this identification, for each $\zeta\in \bT$ we let $\chi_\zeta:\fT\to \bC$ be the character defined as
\[
\chi_\zeta(t)=(t+\fK)(\zeta), \quad t\in \fT.
\]
Basic representation theory of $\rC^*$-algebras implies that up to unitary equivalence, the irreducible $*$-representations of $\fT$ are precisely the identity representation and the characters $\chi_{\zeta}$ for $\zeta\in \bT$. 

It is readily seen that $\fT$ has simply no peaking representation for $\Theta(A(\bD))$ (see the discussion following \cite[Remark 3.4]{kleski2014bdry}). We now show on the other hand that  $\chi_1$ is a locally peaking representation for $\Theta(A(\bD))$. To see this, define the contraction
\[
a=\frac{1}{2}(I+M_z)\in \Theta(A(\bD)).
\]
We claim that $a$ peaks on $\fs_{\chi_1}$. First note that $\chi_1(a)=1$ so that $I-a\in \ker \chi_1$. In particular, $(I-a)\fs_{\chi_1}=0$ so that $a\fs_{\chi_1}=\fs_{\chi_1}$. To show that $a$ peaks on $\fs_{\chi_1}$, by virtue of \cite[Theorem 5.1]{hay2007} it suffices to fix a pure state $\psi$ on $\fT$ with $\psi\neq \chi_1$ and show that $\psi(a^*a)<1$. In view of the above description of the irreducible $*$-representations of $\fT$, we infer that either $\psi=\chi_\zeta$ for some $\zeta\in \bT\setminus\{1\}$ or there is a unit vector $h\in H^2$ such that
\[
\psi(t)=\langle th,h \rangle, \quad t\in \fT.
\] 
Assume first that $\psi=\chi_\zeta$ for some $\zeta\in \bT\setminus\{1\}$. It is readily verified that 
\[
\psi(a^*a)=\frac{ |1+\zeta|^2}{4}<1.
\]
Next, let $h\in H^2$ be a unit vector and assume that
\[
\psi(t)=\langle th,h \rangle, \quad t\in \fT.
\]
If $\psi(a^*a)=1$, then the Cauchy-Schwarz inequality implies that $a^*a h=h$. By definition of $a$, using that $M_z$ is an isometry we infer that 
\[
\frac{1}{2}(M_z+M_z^*)h=h.
\]
Since $M_z h=zh$ and $M_z^*h=(h-h(0))/z$, the previous equation is equivalent to 
\[
(w-1)^2 h(w)=h(0), \quad w\in\bD.
\]
An elementary calculation reveals that the condition $h\in H^2$ forces $h=0$, which is absurd. Hence $\psi(a^*a)<1$. We conclude that $a$ indeed peaks at $\fs_{\chi_1}$. By Corollary \ref{C:peakOAchar} and Theorem \ref{T:peakprojlocpeak}, we conclude that $\chi_1$ is a locally peaking representation for $\A$. 
\qed
\end{example}

\subsection{Strongly peaking representations}\label{SS:unifpeakingrep}
We now turn to a uniform version of the notion of peaking representation. 
Let $\fA$ be a $\rC^*$-algebra and let  $\pi$ be an  irreducible $*$-representation of $\fA$. We let
\[
\U_\pi=\{[\sigma]\in \widehat{\fA}: [\sigma]\neq [\pi]\}.
\]
 Let $n\in \bN$ and $T\in \bM_n(\fA)$. Then, $T$ is said to \emph{peak strongly at $\pi$} if
\[
\|\pi^{(n)}(T)\|>\sup_{[\sigma]\in \U_\pi} \|\sigma^{(n)}(T)\|.
\]
If $\S\subset \fA$ is a subset and there is $n\in \bN$ along with $T\in \bM_n(\S)$ that peaks strongly at $\pi$, then $\pi$ is said to be a \emph{strongly peaking representation for $\S$}. 

We remark here that while strongly peaking representations are a major theme of the recent paper \cite{DP2020}, there is seemingly very little overlap between that paper and our current work. We refer the reader to \cite{DP2020} for connections between these representations and certain uniqueness properties of so-called ``fully compressed" operator systems.

If $\A$ is a uniform algebra on some compact metric space $X$, then the peaking representations for $\A$ are precisely the characters of evaluation at points in the Choquet boundary  \cite[Theorem 11.3]{gamelin1969}, so in particular they exist in abundance. On the other hand, the continuity of the functions in $\rC^*(\A)$ implies that strongly peaking representations for $\rC^*(\A)$ cannot exist unless $X$ has isolated points. We conclude from this that strongly peaking representations may be quite rare. When they do exist however, then enjoy many useful properties, as we shall see below. We start with an observation that can be extracted from the proof of \cite[Corollary 2.6]{DP2020}.

\begin{lemma}\label{L:unifpeakC*env}
Let $\A$ be a unital operator algebra and let $\pi$ be an irreducible $*$-representation of $\rC^*_e(\A)$. Then, $\pi$ is a strongly peaking representation for $\A$ if and only if it is a strongly peaking representation for $\rC^*_e(\A)$.
\end{lemma}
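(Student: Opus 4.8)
The easy implication is immediate: if some $T\in\bM_n(\A)$ peaks strongly at $\pi$, then the same $T$, now viewed in $\bM_n(\rC^*_e(\A))$, witnesses that $\pi$ is strongly peaking for $\rC^*_e(\A)$, since the supremum defining the peaking is taken over the same set $\U_\pi\subset\widehat{\rC^*_e(\A)}$ in either reading. So the whole content lies in the reverse implication, which I would prove by contraposition. Write $\tau=\bigoplus_{[\sigma]\in\U_\pi}\sigma$, so that $\|\tau^{(m)}(X)\|=\sup_{[\sigma]\in\U_\pi}\|\sigma^{(m)}(X)\|$ for every $m$ and every $X\in\bM_m(\rC^*_e(\A))$. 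The first thing I would record is that strong peaking for $\rC^*_e(\A)$ forces $\ker\tau\not\subseteq\ker\pi$: a witnessing $T$ gives $\|\pi^{(n)}(T)\|>\|\tau^{(n)}(T)\|$, whereas $\ker\tau\subseteq\ker\pi$ would make $\tau(x)\mapsto\pi(x)$ a norm-decreasing $*$-homomorphism and hence yield $\|\pi^{(n)}(T)\|\le\|\tau^{(n)}(T)\|$. I would then observe that $\pi$ must itself be a boundary representation for $\A$: since the boundary representations separate $\rC^*_e(\A)$ (Lemma \ref{L:bdrysepequiv}), the intersection of their kernels is $\{0\}$; if $\pi$ were not one of them, every boundary representation would be inequivalent to $\pi$, hence appear in $\tau$, forcing $\ker\tau\subseteq\{0\}\subseteq\ker\pi$, contrary to the preceding sentence. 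Thus $\pi$ has the unique extension property with respect to $\A$.

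Now I would run the contrapositive. Assume $\pi$ is \emph{not} strongly peaking for $\A$; then $\|\pi^{(m)}(S)\|\le\|\tau^{(m)}(S)\|$ for all $m$ and all $S\in\bM_m(\A)$. This is precisely the statement that the unital map $\tau(a)\mapsto\pi(a)$ on $\tau(\A)$ is well defined (take $m=1$) and completely contractive. Invoking the correspondence between unital completely contractive maps on an operator space and unital completely positive maps on the associated operator system (\cite[Proposition 3.5]{paulsen2002}) together with Arveson's extension theorem, I would extend this map to a unital completely positive map $\Phi\colon\rC^*(\tau(\A))=\tau(\rC^*_e(\A))\to B(\H_\pi)$. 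The composition $\Phi\circ\tau\colon\rC^*_e(\A)\to B(\H_\pi)$ is then a unital completely positive map agreeing with $\pi$ on $\A$. Because $\pi$ has the unique extension property, $\Phi\circ\tau=\pi$, and consequently $\tau(x)=0$ implies $\pi(x)=\Phi(\tau(x))=0$, i.e.\ $\ker\tau\subseteq\ker\pi$. This contradicts the conclusion of the first paragraph, so $\pi$ must be strongly peaking for $\A$.

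The step I expect to be the crux is the passage from the completely contractive domination $\|\pi^{(m)}(S)\|\le\|\tau^{(m)}(S)\|$, which is purely operator-algebraic data involving no adjoints, to the genuinely $*$-algebraic conclusion $\ker\tau\subseteq\ker\pi$. Complete contractivity of $\tau(a)\mapsto\pi(a)$ only manufactures a completely positive extension $\Phi\circ\tau$ of $\pi|_\A$, and by itself this says nothing about kernels; it is the rigidity supplied by the unique extension property that collapses this extension onto the $*$-representation $\pi$ and thereby upgrades an inequality of operator-algebra norms into weak containment. For this reason the auxiliary point that $\pi$ is a boundary representation, and hence that the unique extension property is actually available, is the other place demanding care, and I would secure it exactly as in the first paragraph using that the boundary representations separate $\rC^*_e(\A)$.
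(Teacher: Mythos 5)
Your proof is correct, but the non-trivial direction runs on different machinery than the paper's. The paper argues directly: since $T$ witnesses strong peaking for $\rC^*_e(\A)$, the representation $\tau=\bigoplus_{[\sigma]\in\U_\pi}\sigma$ fails to be completely isometric on $\rC^*_e(\A)$, hence is not injective, hence by Lemma \ref{L:bdrysepequiv} is not completely norming for $\A$; any $A\in\bM_m(\A)$ on which $\tau^{(m)}$ fails to attain the norm must then satisfy $\|A\|=\|\pi^{(m)}(A)\|$ (because $\pi\oplus\tau$ is injective, hence completely isometric), and this $A$ is the desired strongly peaking element. That argument is constructive, needs only the first statement of Lemma \ref{L:bdrysepequiv}, and never needs to know that $\pi$ is a boundary representation. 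You instead argue by contradiction: complete domination of $\pi$ by $\tau$ on $\A$ yields, via Arveson's extension theorem, a unital completely positive map $\Phi\circ\tau$ extending $\pi|_\A$, which the unique extension property collapses to $\pi$, giving $\ker\tau\subseteq\ker\pi$ and contradicting strong peaking for $\rC^*_e(\A)$. This is heavier (you must first establish that $\pi$ is a boundary representation, which you do correctly from the second statement of Lemma \ref{L:bdrysepequiv} and the triviality of the Shilov ideal), and it is non-constructive, but it has the side benefit of exhibiting $\pi$ as a boundary representation along the way --- a fact the paper instead derives separately from Theorem \ref{T:peakingrep}. Both arguments ultimately rest on the same fact, namely that the Shilov ideal of $\A$ in $\rC^*_e(\A)$ is trivial.
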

\begin{proof}
Assume that $\pi$ is a strongly peaking representation for $\rC^*_e(\A)$. Thus, there is $n\in \bN$ and $T\in \bM_n(\rC^*_e(\A))$ with the property that
\[
\|\pi^{(n)}(T)\|>\sup_{[\sigma]\in \U_\pi}\|\sigma^{(n)}(T)\|.
\]
For each $u\in \U_\pi$, let $\sigma_u$ be an irreducible $*$-representation of $\rC^*_e(\A)$ such that $[\sigma_u]=u$. 
Then, the $*$-representation $\bigoplus_{u\in \U_\pi}\sigma_u$ is not completely isometric on $\rC^*_e(\A)$ and hence it is not injective either. By Lemma \ref{L:bdrysepequiv}, we conclude that the set $\{\sigma_u:u\in \U_\pi\}$ is not completely norming for $\A$, so there is $m\in \bN$ and $A\in \bM_m(\A)$ such that
\[
\sup_{[\sigma]\in \U_\pi}\|\sigma^{(m)}(A)\|<\|A\|.
\]
On the other hand, the set $\{\pi\}\cup \{\sigma_u:u\in \U_\pi\}$ is clearly completely norming for $\rC^*_e(\A)$, so we must have that $\|A\|=\|\pi^{(m)}(A)\|$. We conclude that $A$ peaks strongly at $\pi$. The converse is trivial.
\end{proof}

 Recall that in the setting of Question \ref{Q:main}, we are dealing with RFD $\rC^*$-algebras. Interestingly, strongly peaking representations must necessarily be finite-dimensional in that case. In fact, we can show even more.
 
\begin{theorem}\label{T:peakingRFD}
Let $\fA$ be an  RFD $\rC^*$-algebra and let $\pi$ be an irreducible $*$-repre\-sentation. Assume that there is $n\in \bN$ and $T\in \bM_n(\fA)$ with the property that
\[
\|\pi^{(n)}(T)\|>\sup_{[\sigma]\in \W_\pi} \|\sigma^{(n)}(T)\|
\]
where $\W_\pi=\{[\sigma]\in \widehat\fA:\sigma\not\sim\pi\}$. Then, $\pi$ is finite-dimensional.
\end{theorem}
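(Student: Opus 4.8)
The plan is to exploit the rigidity of finite-dimensional irreducible representations recorded in Lemma \ref{L:fdimweakcontain}, by producing a finite-dimensional irreducible representation that is weakly equivalent to $\pi$. Write $s=\sup_{[\sigma]\in \W_\pi}\|\sigma^{(n)}(T)\|$, so that the hypothesis reads $\|\pi^{(n)}(T)\|>s$. The first observation I would make is that the quantity $\|\sigma^{(n)}(T)\|$ depends only on the weak equivalence class of $\sigma$: if $\sigma\sim\pi$ then $\ker\sigma=\ker\pi$, hence $\ker\sigma^{(n)}=\ker\pi^{(n)}$, and since two faithful $*$-representations of the common quotient $\bM_n(\fA)/\ker\pi^{(n)}$ are isometric, we get $\|\sigma^{(n)}(T)\|=\|\pi^{(n)}(T)\|$. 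Thus, as measured by $T$, the irreducible $*$-representations of $\fA$ split into two groups: those weakly equivalent to $\pi$, all giving the value $\|\pi^{(n)}(T)\|$, and those in $\W_\pi$, all giving a value at most $s<\|\pi^{(n)}(T)\|$.

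Next I would compute $\|T\|$. Recalling that the irreducible $*$-representations of $\bM_n(\fA)$ are precisely the ampliations $\sigma^{(n)}$ of irreducible $*$-representations $\sigma$ of $\fA$, and that the norm of any element of a $\rC^*$-algebra is the supremum of the norms of its images under irreducible $*$-representations, the dichotomy above yields $\|T\|=\max(\|\pi^{(n)}(T)\|,s)=\|\pi^{(n)}(T)\|$. Now I bring in the hypothesis that $\fA$ is RFD. By Theorem \ref{T:EL} (specifically the equivalence with (iv) and its proof), there is a set $\Delta$ of finite-dimensional irreducible $*$-representations of $\fA$ that is separating for $\fA$, so that $\bigoplus_{\rho\in\Delta}\rho$ is injective and hence completely isometric. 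Ampliating, $\|T\|=\sup_{\rho\in\Delta}\|\rho^{(n)}(T)\|$. Since this supremum equals $\|\pi^{(n)}(T)\|>s$, there must be some $\rho\in\Delta$ with $\|\rho^{(n)}(T)\|>s$.

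By the dichotomy of the first step, such a $\rho$ cannot lie in $\W_\pi$, so $\rho\sim\pi$; in particular $\pi\prec\rho$. Since $\rho$ is finite-dimensional and $\pi\prec\rho$, Lemma \ref{L:fdimweakcontain} applies, with $\rho$ playing the role of the finite-dimensional representation, and forces $\pi$ to be finite-dimensional (indeed unitarily equivalent to $\rho$), as desired. I expect the only delicate point to be the bookkeeping around the norm identities, namely that weak equivalence of irreducible $*$-representations of $\fA$ is exactly what preserves the numbers $\|\sigma^{(n)}(T)\|$, and that passing to $\bM_n(\fA)$ introduces no irreducible $*$-representations beyond the ampliations. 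Conceptually, the whole argument can be rephrased topologically: the set $\{[\sigma]\in\widehat{\fA}:\|\sigma^{(n)}(T)\|>s\}$ is an open neighborhood of $[\pi]$ contained entirely in the weak equivalence class of $\pi$, and residual finite-dimensionality guarantees, via the density of finite-dimensional irreducible $*$-representations in $\widehat{\fA}$, that this neighborhood must meet one of them.
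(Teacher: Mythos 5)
Your proof is correct, and it takes a genuinely different route from the paper's. The paper produces the required finite-dimensional irreducible representation by invoking the Exel--Loring characterization of RFD (Theorem \ref{T:EL}(ii)): it takes a finite-dimensional approximation $(\pi_\lambda)$ of $\pi$ itself, runs a quantitative $\delta(1+\eps)>1$ argument to find an index $\mu$ with $\|\pi_\mu^{(n)}(T)\|\geq\delta\|\pi^{(n)}(T)\|$, and then decomposes the finite-dimensional algebra $\pi_\mu(\fA)$ into irreducible summands to extract a finite-dimensional irreducible $\rho_k$ that still beats $\sup_{[\sigma]\in\W_\pi}\|\sigma^{(n)}(T)\|$. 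You instead make the preliminary observation --- which the paper never states --- that $\|\sigma^{(n)}(T)\|$ is constant on the weak equivalence class of $\pi$ and hence that $\|T\|=\|\pi^{(n)}(T)\|$; this lets you work with \emph{any} separating family of finite-dimensional irreducible $*$-representations (Theorem \ref{T:EL}(iv)) rather than an approximation of $\pi$, and a one-line supremum argument hands you a finite-dimensional irreducible $\rho$ with $\|\rho^{(n)}(T)\|>\sup_{[\sigma]\in\W_\pi}\|\sigma^{(n)}(T)\|$. Both arguments then finish identically: such a $\rho$ cannot lie in $\W_\pi$, so $\pi\prec\rho$ and Lemma \ref{L:fdimweakcontain} forces $\pi$ to be finite-dimensional. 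Your version is arguably more economical, avoiding the $\eps$--$\delta$ bookkeeping and the decomposition of $\pi_\mu(\fA)$ entirely, at the cost of needing the (standard, and correctly justified) facts that injective $*$-homomorphisms are isometric and that the irreducible $*$-representations of $\bM_n(\fA)$ are exactly the ampliations; your closing topological gloss, via lower semicontinuity of $[\sigma]\mapsto\|\sigma^{(n)}(T)\|$ on $\widehat\fA$ and density of the finite-dimensional classes, is also sound and fits the paper's ``isolated point'' philosophy well.
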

\begin{proof}
Write $\pi:\fA\to B(\H_\pi)$ and choose  $\eps>0$ with
\[
\|\pi^{(n)}(T)\|\geq (1+\eps)\|\sigma^{(n)}(T)\|, \quad \sigma\in \W_\pi.
\]
By virtue of Theorem \ref{T:EL}, there is a net of $*$-representations
\[
\pi_\lambda:\fA\to B(\H_\pi), \quad \lambda\in \Lambda
\]
such that $\pi_\lambda(\fA)$ is finite-dimensional for every $\lambda\in \Lambda$ and
\[
\lim_{\lambda} \pi_\lambda(s)\xi=\pi(s)\xi, \quad s\in \fA, \xi\in \H_\pi.
\]
Choose $0<\delta<1$ such that $\delta(1+\eps)>1$. There is $\mu\in \Lambda$ such that
\[
\|\pi_\mu^{(n)}(T)\|\geq \delta \|\pi^{(n)}(T)\|.
\]
Since $\pi_\mu(\fA)$ is finite-dimensional, there are finite-dimensional irreducible $*$-represen\-tations $\rho_1,\ldots,\rho_r$ of $\fA$ with the property that
\[
\pi_\mu(s)\mapsto \rho_1(s)\oplus \rho_2(s)\oplus \ldots \oplus\rho_r(s), \quad s\in \fA
\]
is a $*$-isomorphism. Hence
\[
\|\pi_\mu^{(n)}(T)\|=\max_{1\leq j\leq r} \|\rho_j^{(n)}(T)\|.
\]
Thus, there is $1\leq k\leq r$ such that
\[
 \|\rho_k^{(n)}(T)\|\geq \delta \|\pi^{(n)}(T)\|
\]
whence
\[
 \|\rho_k^{(n)}(T)\|\geq \delta(1+\eps) \|\sigma^{(n)}(T)\|, \quad \sigma\in \W_\pi.
\]
Since $\delta(1+\eps)>1$, we infer that $[\rho_k]\notin \W_\pi$ whence $\rho_k$ is weakly equivalent to $\pi$. Because $\rho_k$ is finite-dimensional, so must be $\pi$.
\end{proof}

In the notation of the previous result, we see that $\W_\pi\subset \U_\pi$, so Theorem \ref{T:peakingRFD} applies in particular to strongly peaking representations.
We now obtain a consequence which addresses Question \ref{Q:main}.

\begin{corollary}\label{C:peakingRFDfd}
Let $\A$ be a unital operator algebra such that $\rC_e^*(\A)$ is RFD. Let $\pi$ be a strongly peaking $*$-representation for $\rC^*_e(\A)$.  Then, $\pi$ is a finite-dimensional boundary representation for $\A$.
\end{corollary}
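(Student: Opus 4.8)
The plan is to assemble the two conclusions—that $\pi$ is finite-dimensional and that it is a boundary representation for $\A$—from results already in hand, after a single preliminary reduction. First I would use Lemma \ref{L:unifpeakC*env} to transfer the strong peaking hypothesis from $\rC^*_e(\A)$ to $\A$: since $\pi$ is strongly peaking for $\rC^*_e(\A)$, it is strongly peaking for $\A$, so I may fix $n\in \bN$ and $T\in \bM_n(\A)$ with
\[
\|\pi^{(n)}(T)\|>\sup_{[\sigma]\in \U_\pi}\|\sigma^{(n)}(T)\|.
\]
This reduction is the linchpin of the whole argument, as it places the peaking matrix inside $\A$, which is exactly the form later required to apply Theorem \ref{T:peakingrep}.

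For finite-dimensionality I would feed this inequality into Theorem \ref{T:peakingRFD}. Since unitary equivalence is stronger than weak equivalence, we have $\W_\pi\subset \U_\pi$, and therefore
\[
\sup_{[\sigma]\in \W_\pi}\|\sigma^{(n)}(T)\|\leq \sup_{[\sigma]\in \U_\pi}\|\sigma^{(n)}(T)\|<\|\pi^{(n)}(T)\|.
\]
As $\rC^*_e(\A)$ is RFD, Theorem \ref{T:peakingRFD} then forces $\pi$ to be finite-dimensional.

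For the boundary property I would observe that strong peaking trivially yields local peaking: for any irreducible $\sigma$ not unitarily equivalent to $\pi$ and any finite-dimensional subspace $\F\subset \H_\sigma$, compression does not increase the norm, so
\[
\|P_{\F^{(n)}}\sigma^{(n)}(T)|_{\F^{(n)}}\|\leq \|\sigma^{(n)}(T)\|\leq \sup_{[\sigma]\in \U_\pi}\|\sigma^{(n)}(T)\|<\|\pi^{(n)}(T)\|.
\]
Hence $T$ peaks locally at $\pi$, so $\pi$ is a locally peaking representation for $\A$. Regarding $\A$ as concretely represented inside $\rC^*_e(\A)$ so that $\rC^*(\A)=\rC^*_e(\A)$, Theorem \ref{T:peakingrep} then shows that $\pi$ is a boundary representation for $\A$.

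I do not anticipate a genuine obstacle; this corollary is essentially bookkeeping that combines the two substantive results, namely Theorem \ref{T:peakingRFD} (where the Exel--Loring finite-dimensional approximations carry the analytic weight) and Theorem \ref{T:peakingrep} (which itself rests on the norming boundary existence result, Theorem \ref{T:bdryexist}). The only points that require care are using the inclusion $\W_\pi\subset \U_\pi$ in the correct direction, so as not to conflate unitary with weak equivalence, and invoking Lemma \ref{L:unifpeakC*env} at the very outset, so that the peaking matrix may be taken with entries in $\A$ rather than merely in $\rC^*_e(\A)$.
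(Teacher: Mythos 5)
Your proof is correct and follows exactly the paper's route: the paper's own proof is simply ``combine Theorem \ref{T:peakingrep}, Lemma \ref{L:unifpeakC*env} and Theorem \ref{T:peakingRFD}'', and you have supplied the same combination with the details (the inclusion $\W_\pi\subset \U_\pi$ and the passage from strong to local peaking via norm-contractivity of compressions) correctly filled in.
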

\begin{proof}
Simply combine Theorem  \ref{T:peakingrep}, Lemma \ref{L:unifpeakC*env} and Theorem \ref{T:peakingRFD}.
\end{proof}

In view of Question \ref{Q:main} and of the previous result, it is worthwhile to track down strongly peaking representations among the finite-dimensional ones. We record next a useful observation: a finite-dimensional irreducible $*$-representation of a $\rC^*$-algebra $\fA$ is necessarily strongly peaking  when its support projection belongs to $\fA$ itself (and not only to the larger bidual $\fA^{**}$).

\begin{lemma}\label{L:clopenproj}
Let $\fA$ be a $\rC^*$-algebra. Let $\pi$ be a finite-dimensional irreducible $*$-representation of $\fA$ with support projection $\fs_\pi$ lying in $\fA$. Then, $\fs_\pi$ peaks strongly at $\pi$.
\end{lemma}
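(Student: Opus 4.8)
The plan is to verify the strong-peaking inequality directly, taking $n=1$ and $T=\fs_\pi$ in $\bM_1(\fA)=\fA$. The crucial structural feature we will exploit is that $\fs_\pi$ is a \emph{projection}: this forces $\|\sigma(\fs_\pi)\|\in\{0,1\}$ for every $*$-representation $\sigma$, so that controlling the supremum over $\U_\pi$ reduces to showing that $\sigma(\fs_\pi)=0$ for every irreducible $\sigma$ with $[\sigma]\neq[\pi]$, rather than merely bounding each norm below $1$.

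First I would compute the left-hand side. Since $\H_\pi$ is finite-dimensional, so is $B(\H_\pi)$, and hence $\pi^{**}=\widehat\pi$. By the definition of the support projection, $\fe_\pi=I-\fs_\pi$ lies in $\ker\pi^{**}$, so
\[
\pi(\fs_\pi)=\pi^{**}(\fs_\pi)=I-\pi^{**}(\fe_\pi)=I,
\]
and therefore $\|\pi(\fs_\pi)\|=1$.

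Next I would analyze $\sigma(\fs_\pi)$ for an arbitrary irreducible $\sigma$ with $[\sigma]\neq[\pi]$. Because $\fe_\pi$ (hence $\fs_\pi$) is central in $\fA^{**}$ and $\fs_\pi\in\fA$, the projection $\fs_\pi$ is central in $\fA$; thus $\sigma(\fs_\pi)$ commutes with $\sigma(\fA)$ and so lies in the commutant $\sigma(\fA)'=\bC I$ by Schur's lemma. Being a projection, $\sigma(\fs_\pi)$ is therefore either $0$ or $I$. To exclude the value $I$, suppose $\sigma(\fs_\pi)=I$. Passing to the weak-$*$ continuous extension $\widehat\sigma$ (which is unital, as $\sigma$ is nondegenerate) gives $\widehat\sigma(\fe_\pi)=I-\widehat\sigma(\fs_\pi)=0$; since $\fe_\pi$ is central, it follows that $\fe_\pi\fA^{**}\subset\ker\widehat\sigma$. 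On the other hand $\ker\widehat\pi=\ker\pi^{**}=\fe_\pi\fA^{**}$, so $\ker\widehat\pi\subset\ker\widehat\sigma$, and restricting to $\fA$ yields $\ker\pi\subset\ker\sigma$, i.e. $\sigma\prec\pi$. By Lemma \ref{L:fdimweakcontain} this forces $\sigma$ to be unitarily equivalent to $\pi$, contradicting $[\sigma]\neq[\pi]$. Hence $\sigma(\fs_\pi)=0$.

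Combining the two computations, $\sup_{[\sigma]\in\U_\pi}\|\sigma(\fs_\pi)\|=0<1=\|\pi(\fs_\pi)\|$, so $\fs_\pi$ peaks strongly at $\pi$. The only genuinely delicate point is the passage from ``$\sigma(\fs_\pi)\neq I$'' to ``$\sigma(\fs_\pi)=0$'', which is exactly where the projection-plus-centrality dichotomy is indispensable: without it one would only obtain $\|\sigma(\fs_\pi)\|\le 1$ with no uniform gap, and the supremum could fail to be strictly less than $1$.
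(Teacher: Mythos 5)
Your proof is correct and follows essentially the same route as the paper's: compute $\pi(\fs_\pi)=I$, use centrality of $\fs_\pi$ together with irreducibility of $\sigma$ to force $\sigma(\fs_\pi)\in\{0,I\}$, and invoke Lemma \ref{L:fdimweakcontain} to rule out the value $I$ for $[\sigma]\neq[\pi]$. The only cosmetic difference is that you argue by contradiction (assume $\sigma(\fs_\pi)=I$ and deduce $\sigma\prec\pi$), whereas the paper applies the same lemma in contrapositive form to conclude directly that $\ker\pi\not\subset\ker\sigma$ and hence $\sigma(\fs_\pi)\neq I$.
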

\begin{proof}
 Let $\sigma$ be an irreducible $*$-representation of $\fA$ which is not unitary  equivalent to $\pi$. By Lemma \ref{L:fdimweakcontain}, we see that $\ker \pi$ is not contained in $\ker \sigma$, whence $\sigma(\fs_\pi)\neq I$ as $(I-\fs_\pi)\fA^{**}=\ker \pi^{**}$. Since $\sigma$ is irreducible and $\fs_\pi$ is central, this forces $\sigma(\fs_\pi)=0$. We conclude that
\[
\|\pi(\fs_\pi)\|=1>0=\sup_{[\sigma]\in \U_\pi}\|\sigma(\fs_\pi)\|
\]
so $\fs_\pi$ peaks strongly at $\pi$.
\end{proof}

The requirement that the support projection lies in $\fA$ is equivalent to it being simultaneously closed and open \cite[Theorem II.18]{akemann1969} in the non-commutative topology discussed in Subsection \ref{SS:suppproj}. 

We also note that there are known examples of unital operator algebras $\A$ and of injective, infinite-dimensional, irreducible $*$-representations $\pi$ of $\rC^*_e(\A)$ that are not boundary representations for $\A$ \cite[Example 6.6.3]{DK2019}. In this case, $\fs_\pi=I\in \rC^*_e(\A)$, so in light of Theorem \ref{T:peakingrep} and Lemma \ref{L:unifpeakC*env}, we see that the statement of Lemma \ref{L:clopenproj} cannot be strengthened to cover infinite-dimensional representations.
 
 We now give an elementary application.

\begin{corollary}\label{C:centralproj}
Let $\A$ be a unital operator algebra such that $\rC^*_e(\A)$ contains a non-zero finite-dimensional closed two-sided ideal. Then, there is a finite-dimensional strongly peaking boundary representation for $\A$.
\end{corollary}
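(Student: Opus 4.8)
The plan is to extract from the finite-dimensional ideal a central projection that serves as the support projection of a finite-dimensional irreducible representation, and then feed this into the peaking machinery already developed. Write $\fA = \rC^*_e(\A)$ and let $\fJ \subset \fA$ be the given non-zero finite-dimensional closed two-sided ideal. Being a finite-dimensional $\rC^*$-algebra, $\fJ$ is unital; let $e$ denote its unit, a projection in $\fA$. I would first check that $e$ is central in $\fA$: for $a \in \fA$ both $ea$ and $ae$ lie in $\fJ$, and since $e$ acts as a unit on $\fJ$ one computes $ea = (ea)e = eae = e(ae) = ae$, so that $\fJ = e\fA = e\fA e$. Decomposing the finite-dimensional $\rC^*$-algebra $\fJ$ into its matrix blocks, I then choose the central projection $p$ of one block, so that $p \leq e$ and $p\fA = p\fA p \cong \bM_m$ for some $m \in \bN$. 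A short computation of the same flavor (using $p = pe$, that $p$ is central in $\fJ$, and that $e$ is a unit for $\fJ$) shows that $p$ is central in $\fA$ as well.

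Next, define $\pi : \fA \to p\fA \cong \bM_m$ by $\pi(a) = pa$. Since $p\fA \cong \bM_m$ is simple and finite-dimensional, $\pi$ is a finite-dimensional irreducible $*$-representation, and by Lemma \ref{L:suppprojmult} its support projection is exactly $\fs_\pi = p \in \fA$. Lemma \ref{L:clopenproj} then applies directly and shows that $p = \fs_\pi$ peaks strongly at $\pi$, so that $\pi$ is a strongly peaking representation for $\fA = \rC^*_e(\A)$. By Lemma \ref{L:unifpeakC*env}, $\pi$ is consequently a strongly peaking representation for $\A$ as well, meaning that there is $n \in \bN$ and $T \in \bM_n(\A)$ witnessing the strong peaking inequality.

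Finally, to upgrade to a boundary representation I observe that a strongly peaking representation is in particular locally peaking: for any irreducible $\sigma$ with $[\sigma] \in \U_\pi$ and any finite-dimensional $\F \subset \H_\sigma$ we have $\|P_{\F^{(n)}}\sigma^{(n)}(T)|_{\F^{(n)}}\| \leq \|\sigma^{(n)}(T)\| \leq \sup_{[\sigma]\in \U_\pi}\|\sigma^{(n)}(T)\| < \|\pi^{(n)}(T)\|$. Realizing $\rC^*_e(\A)$ faithfully on a Hilbert space so that $\rC^*(\A) = \rC^*_e(\A)$ concretely, Theorem \ref{T:peakingrep} then yields that $\pi$ is a boundary representation for $\A$, and it is finite-dimensional by construction. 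The only genuine content beyond reassembling earlier results is the algebraic step producing the central matrix-block projection $p \in \fA$, which is routine $\rC^*$-structure theory; I therefore expect no serious obstacle, the proof being essentially a bookkeeping combination of Lemmas \ref{L:suppprojmult}, \ref{L:clopenproj} and \ref{L:unifpeakC*env} with Theorem \ref{T:peakingrep}.
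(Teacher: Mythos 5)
Your proof is correct and follows essentially the same route as the paper: both arguments produce a central projection $p\in\rC^*_e(\A)$ with $p\,\rC^*_e(\A)$ a simple finite-dimensional ideal (you via the unit of $\fJ$ and its matrix-block decomposition, the paper via a minimal ideal of $\fJ$), identify it as the support projection using Lemma \ref{L:suppprojmult}, and then combine Lemmas \ref{L:clopenproj} and \ref{L:unifpeakC*env} with Theorem \ref{T:peakingrep}. Your explicit remark that strong peaking for $\A$ implies local peaking, placed \emph{after} invoking Lemma \ref{L:unifpeakC*env} to get a witness in $\bM_n(\A)$, is in fact a slightly cleaner ordering of the final step than the paper's.
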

\begin{proof}
Let $\fJ\subset \rC^*_e(\A)$ be a non-zero finite-dimensional closed two-sided ideal. Let $\fJ'$ be a minimal non-zero closed two-sided ideal in $\fJ$. There is a central projection $e\in \rC^*_e(\A)$ such that $\fJ=\rC^*_e(\A)e$. Thus
\[
\rC^*_e(\A) \fJ'=\rC^*_e(\A) e\fJ'=\fJ \fJ'\subset \fJ' \qand  \fJ'\rC^*_e(\A)=\fJ' e\rC^*_e(\A)=\fJ' \fJ\subset \fJ' .
\]
We infer that $\fJ'$ is a minimal  non-zero closed two-sided ideal of $\rC^*_e(\A)$, which is still finite-dimensional. Therefore, there is a central projection $z\in \rC^*_e(\A)$ such that $\fJ'=\rC^*_e(\A)z$.
Consider the unital $*$-representation $\pi:\rC^*_e(\A)\to \fJ'$ defined as 
\[
\pi(t)=tz, \quad t\in \rC^*_e(\A).
\]
By Lemma \ref{L:suppprojmult} we obtain that $\fs_\pi=z$. Now, $\fJ'$ is a simple finite-dimensional $\rC^*$-algebra so there is a positive integer $n$ and a $*$-isomorphism $\sigma: \fJ'\to \bM_n$. Then, the support projection for the irreducible finite-dimensional $*$-representation $\sigma\circ \pi$ is simply $z$. By Lemma  \ref{L:clopenproj}, we see that $z$ peaks strongly at $\sigma\circ \pi$. Therefore, $\sigma\circ \pi$ is a boundary representation for $\A$ by Theorem \ref{T:peakingrep}. Moreover, $\sigma\circ \pi$ is a strongly peaking representation for $\A$ by virtue of Lemma \ref{L:unifpeakC*env}.
\end{proof}

 The next result gives a sufficient condition for a support projection to be closed and open. Given a bounded linear operator $T$ on a Hilbert space, we denote its spectrum by $\spec(T)$.

\begin{theorem}\label{T:specpeak}
Let $\fA$ be a unital $\rC^*$-algebra and let $\Delta$ be a set of irreducible $*$-representations of $\fA$ which is separating for $\fA$. Assume that there is a positive element $t\in \fA$ and a finite-dimensional $*$-representation $\pi\in \Delta$ with the property 
\[
\min \spec(\pi(t))> \sup_{\sigma\in \Delta, \sigma\not\sim \pi} \|\sigma(t)\|.
\]
Then, the support projection of $\pi$ lies in $\fA$ and peaks strongly at $\pi$.
\end{theorem}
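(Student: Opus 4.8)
The plan is to use the spectral hypothesis to manufacture a genuine projection inside $\fA$, to identify it with the support projection $\fs_\pi$, and then to invoke Lemma \ref{L:clopenproj} to conclude. First I would set $\beta=\sup_{\sigma\in\Delta,\ \sigma\not\sim\pi}\|\sigma(t)\|$ and $\alpha=\min\spec(\pi(t))$, so that $\beta<\alpha$ by hypothesis. The crucial observation is that $t$ is self-adjoint and $\Delta$ is separating, so that $\bigoplus_{\sigma\in\Delta}\sigma$ is a faithful $*$-representation and hence $\spec_\fA(t)=\overline{\bigcup_{\sigma\in\Delta}\spec(\sigma(t))}$. Now Lemma \ref{L:fdimweakcontain} guarantees that every $\sigma\in\Delta$ with $\sigma\sim\pi$ is in fact unitarily equivalent to $\pi$, so its spectrum agrees with $\spec(\pi(t))\subseteq[\alpha,\|t\|]$, while every $\sigma\not\sim\pi$ contributes a spectrum contained in $[0,\beta]$. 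Consequently $\spec_\fA(t)\subseteq[0,\beta]\cup[\alpha,\|t\|]$ has a gap in the interval $(\beta,\alpha)$. This lets me apply continuous functional calculus to the characteristic function $\chi$ of $[\alpha,\infty)$, which restricts to a continuous idempotent on $\spec_\fA(t)$, and set $p=\chi(t)\in\fA$. By construction $p$ is a projection with $\pi(p)=I$ and $\sigma(p)=0$ for all $\sigma\in\Delta$ with $\sigma\not\sim\pi$.

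Next I would show $p=\fs_\pi$. Since each $\sigma(p)$ is either $0$ or $I$, it commutes with $\sigma(\fA)$, so $\sigma(pa-ap)=0$ for every $a\in\fA$ and every $\sigma\in\Delta$; separation then forces $p$ to be central in $\fA$, and a density argument as in Lemma \ref{L:suppprojmult} shows $p$ is central in $\fA^{**}$ as well. I would then compute $\ker\pi=\fA(I-p)$: the inclusion $\supseteq$ is clear, while for the reverse I note that any $a\in\ker\pi$ satisfies $\sigma(ap)=0$ for every $\sigma\in\Delta$ (using that $\ker\sigma=\ker\pi$ when $\sigma\sim\pi$, and $\sigma(p)=0$ otherwise), whence $ap=0$ by separation and $a=a(I-p)$. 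Taking biduals, and using that multiplication by the central projection $I-p$ is weak-$*$ continuous so that $\fA(I-p)$ is weak-$*$ dense in the weak-$*$ closed set $(I-p)\fA^{**}$, I obtain
\[
\ker\pi^{**}=(\ker\pi)^{\perp\perp}=(I-p)\fA^{**}=(I-\fs_\pi)\fA^{**}.
\]
Since $p$ and $\fs_\pi$ are both central projections generating the same weak-$*$ closed ideal, $p=\fs_\pi\in\fA$. Finally, because $\pi$ is a finite-dimensional irreducible $*$-representation whose support projection lies in $\fA$, Lemma \ref{L:clopenproj} shows that $\fs_\pi$ peaks strongly at $\pi$, which completes the argument.

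I expect the main obstacle to be the identification $p=\fs_\pi$ rather than the production of $p$ itself. Establishing the spectral gap and invoking functional calculus is essentially immediate once the separating family is used to realize $\spec_\fA(t)$ as the closure of $\bigcup_\sigma\spec(\sigma(t))$, but passing from the concrete projection $p\in\fA$ to the abstract support projection requires carefully computing $\ker\pi$ and controlling its double annihilator inside $\fA^{**}$. The centrality of $p$ is what makes this bidual computation tractable, so verifying centrality via the separating family is the linchpin of the second stage.
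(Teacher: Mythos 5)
Your proposal is correct, and the first stage (the spectral gap, the functional-calculus projection $p$ with $\pi(p)=I$ and $\sigma(p)=0$ for $\sigma\not\sim\pi$, and the verification of centrality via the separating family) is essentially identical to the paper's argument. Where you diverge is in the endgame. The paper does not compute $\ker\pi$ at all: it observes that $\fs_\pi\leq p$, then uses Lemma \ref{L:fdimweakcontain} to see that every $\sigma\in\Delta$ with $\sigma\sim\pi$ is unitarily equivalent to $\pi$, so that $\fA p\cong\pi(\fA p)$ is a \emph{finite-dimensional} closed two-sided ideal; being finite-dimensional, $\fA p$ is already weak-$*$ closed, so $\fs_\pi\in\fA^{**}p=\fA p\subset\fA$. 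You instead identify $\ker\pi=\fA(I-p)$ exactly, pass to double annihilators to get $\ker\pi^{**}=(I-p)\fA^{**}$, and conclude $p=\fs_\pi$ by uniqueness of the central projection generating a weak-$*$ closed ideal. Both routes are valid; yours is slightly more direct in that it pins down $\fs_\pi=p$ without needing the finite-dimensionality of $\fA p$ (only the finite-dimensionality of $\pi$ enters at the very end, through Lemma \ref{L:clopenproj}), whereas the paper's route packages the weak-$*$ closedness question into the easy observation that finite-dimensional subspaces are automatically weak-$*$ closed. The one step you should make explicit is the density claim $(\fA(I-p))^{\perp\perp}=\fA^{**}(I-p)$: it follows because $x\mapsto x(I-p)$ is weak-$*$ continuous with weak-$*$ closed range, so the image of the dense subalgebra $\fA$ is dense in that range — you gesture at this correctly, so there is no gap, only a point worth spelling out.
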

\begin{proof}
Upon scaling if necessary, we may assume that $\|\pi(t)\|=1$. Put
\[
r=\sup_{\sigma\in \Delta, \sigma\not\sim \pi} \|\sigma(t)\|
\]
and
\[
R=\min\{\lambda:\lambda\in \spec(\pi(t))\}.
\]
Thus, we have $0\leq r<R\leq1 $. 
Choose a continuous function $f: [0,1] \to [0,1]$ with the property that $f=1$ on the interval $[R,1]$ and $f=0$ on the interval $[0,r]$. If $\sigma$ is an irreducible $*$-representation of $\fA$ with $\sigma\sim \pi$, then $\spec(\pi(t))=\spec(\sigma(t))$. Since $\Delta$ is assumed to be separating for $\fA$, we conclude that $\spec(t)$ coincides with the closure of
\[
\spec(\pi(t))\cup \bigcup_{\sigma\in \Delta,\sigma\not\sim \pi} \spec(\sigma(t)).
\]
Thus, 
\[
\spec(t)\subset [0,r]\cup [R,1].
\]
If we let $p=f(t)$, then we see that $p$ is a self-adjoint projection in $\fA$. Furthermore, we have
\[
\pi(p)=\pi(f(t))=f(\pi(t))=I
\]
since $f=1$ on $\spec(\pi(t))\subset [R,1]$. Likewise, if $\sigma\in \Delta$ is such that $\sigma\not\sim\pi$, then $f=0$ on $\spec(\sigma(t))\subset [0,r]$ so
\[
\sigma(p)=\sigma(f(t))=f(\sigma(t))=0.
\]
We also infer that
\[
\pi(ps-sp)=\pi(s)-\pi(s)=0
\] 
and
\[
\sigma(ps-sp)=0-0=0
\]
for every $\sigma\in \Delta,\sigma\not\sim \pi$ and  every $s\in \fA$.  Hence, using that $\Delta$ is separating for $\fA$, we conclude that $p$ is a non-zero central projection and $\fA p\subset\bigcap_{\sigma\in \Delta,\sigma\not\sim \pi}\ker \sigma$. Observe next that
\[
\ker \left(  \bigoplus_{\sigma\in \Delta, \sigma\sim\pi}\sigma\right)=\ker \pi.
\]
Using once again that $\Delta$ is separating for $\fA$, we see that  $\fA p$ is $*$-isomorphic to 
\begin{align*}
\left( \bigoplus_{\sigma\in \Delta}\sigma\right)(\fA p) &\cong\left( \bigoplus_{\sigma\in \Delta, \sigma\sim\pi}\sigma\right)(\fA p).
\end{align*}
By Lemma \ref{L:fdimweakcontain}, we see that every $\sigma\in \Delta$ with $\sigma\sim \pi$ is in fact unitarily equivalent to $\pi$, whence $\fA p$ is $*$-isomorphic to $\pi(\fA p)$, and in particular
%
%&\cong \fA p/(\fA p\cap \ker \pi)\\
%&\cong (\fA p+\ker \pi)/\ker \pi\cong \pi(\fA p).
%\end{align*}
 is a non-zero finite-dimensional closed two-sided ideal. 

Since $\pi(p)=I$ we have $\fs_\pi\leq p$. In particular, $\fs_\pi$ lies in $\fA^{**}p$. This last set coincides with the weak-$*$ closure of $\fA p$ inside of $\fA^{**}$. But $\fA p$ is finite-dimensional and hence weak-$*$-closed already, so that $\fs_\pi\in \fA p\subset  \fA$. We conclude from Lemma \ref{L:clopenproj} that $\fs_\pi$ peaks strongly at $\pi$.
\end{proof}

We close this section with another application of our ideas. This example is related to \cite[Theorem 3.9]{DP2020}, which is obtained by other means.

 \begin{example}\label{E:specpeak}
Let $\A$ be a unital operator algebra with the property that $\rC^*_e(\A)$ is RFD. We may thus assume that there is a set $\{r_\lambda:\lambda\in \Lambda\}$ of positive integers with the property that 
 \[
 \A\subset \rC^*_e(\A)\subset  \prod_{\lambda\in \Lambda} \bM_{r_{\lambda}}
 \] 
 and that each natural projection $\pi_\lambda:\rC_e^*(\A)\to \bM_{r_\lambda}$ is surjective. Hence, each $\pi_\lambda$ is a finite-dimensional irreducible $*$-representation, and the set $\Delta=\{\pi_\lambda:\lambda\in \Lambda\}$ is separating for $\rC_e^*(\A)$. 
  Assume also that there is a positive element $t=(t_\lambda)_{\lambda\in \Lambda}\in \rC_e^*(\A)$ such that there is $\lambda_0\in \Lambda$ with 
 \[
 \min \spec(t_{\lambda_0})> \sup_{\lambda\neq \lambda_0}\|t_\lambda\|.
 \]
 Then, we may apply Theorem \ref{T:specpeak} to conclude that the support projection of  $\pi_{\lambda_0}$ belongs to $\rC_e^*(\A)$ and peaks strongly at $\pi_{\lambda_0}$. By Lemma \ref{L:unifpeakC*env}, we see that $\pi_{\lambda_0}$ is a strongly peaking representation for $\A$. In particular, we see that $\pi_{\lambda_0}$ is a finite-dimensional boundary representation for $\A$ by Theorem \ref{T:peakingrep}. 
  \qed
 \end{example}

%%%%%%%%%%%%%%%%%%%%%%%%%%%%%%%%%%%%%%%%%%%%%%%%%%%%%%%%%%%%%%%%%%%%%%%%%%%
%%%%%%%%%%%%%%%%%%%%%%%%%%%%%%%%%%%%%%%%%%%%%%%%%%%%%%%%%%%%%%%%%%%%%%%%%%%
%%%%%%%%%%%%%%%%%%%%%%%%%%%%%%%%%%%%%%%%%%%%%%%%%%%%%%%%%%%%%%%%%%%%%%%%%%%
%%%%%%%%%%%%%%%%%%%%%%%%%%%%%%%%%%%%%%%%%%%%%%%%%%%%%%%%%%%%%%%%%%%%%%%%%%%
%%%%%%%%%%%%%%%%%%%%%%%%%%%%%%%%%%%%%%%%%%%%%%%%%%%%%%%%%%%%

\section{RFD $\rC^*$-envelopes and $\rC^*$-liminality}\label{S:C*lim}

In this section, we study unital operator spaces whose $\rC^*$-envelopes are assumed to admit a large supply of a finite-dimensional boundary representations. Our first result is a characterization of the residual finite-dimensionality of the $\rC^*$-envelope, in the spirit of Theorem \ref{T:EL}.

\begin{theorem}\label{T:ELUEP}
Let $\M$ be a unital operator space. Consider the following statements.
\begin{enumerate}[{\rm (i)}]
\item The algebra $\rC^*_e(\M)$ is RFD.
\item Every boundary representation for $\M$ on $\rC^*_e(\M)$ admits a finite-dimensional approximation.
\item Every boundary representation for $\M$ on $\rC^*_e(\M)$  admits a finite-dimensional approximation consisting of $*$-representations with the essential unique extension property with respect to $\M$.
\item The set of unitary equivalence classes of finite-dimensional boundary representations for $\M$ is dense in $\widehat{\rC^*_e(\M)}$.
\item There is a set of finite-dimensional boundary representations for $\M$ which is separating for $\rC^*_e(\M)$.

\end{enumerate}
Then, we have that   {\rm(v)} $\Longleftrightarrow$ {\rm(iv)} $\Longleftrightarrow$ {\rm(iii)} $\Longrightarrow$ {\rm(ii)} $\Longleftrightarrow$ {\rm(i)}.
\end{theorem}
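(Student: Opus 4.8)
The plan is to prove the equivalences by first disposing of the routine links and then concentrating all the difficulty into the single implication (iv)\,$\Rightarrow$\,(iii).

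I would begin with (i)\,$\Leftrightarrow$\,(ii). The forward direction is immediate from Theorem~\ref{T:EL}, since a boundary representation is in particular a $*$-representation of the $\rC^*$-algebra $\rC^*_e(\M)$. For (ii)\,$\Rightarrow$\,(i) I would argue by separation: by Lemma~\ref{L:bdrysepequiv} there is a set $\Delta$ of boundary representations that is separating for $\rC^*_e(\M)$, and each $\beta\in\Delta$ has a finite-dimensional approximation $(\beta_\lambda)$. Given $a\neq 0$, choose $\beta\in\Delta$ with $\beta(a)\neq 0$ and a vector $\xi$ with $\beta(a)\xi\neq 0$; then $\beta_\lambda(a)\xi\to\beta(a)\xi$ forces $\beta_\lambda(a)\neq 0$ for large $\lambda$. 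Decomposing the finite-dimensional-image representation $\beta_\lambda$ into finite-dimensional irreducibles yields a finite-dimensional irreducible representation nonvanishing on $a$, so the finite-dimensional irreducibles separate $\rC^*_e(\M)$ and (i) follows. The equivalence (iv)\,$\Leftrightarrow$\,(v) is the standard dictionary between density in the spectrum and separation, exactly as in the proof of Theorem~\ref{T:density}: a set of irreducible representations is dense in $\widehat{\rC^*_e(\M)}$ precisely when the intersection of their kernels is $\{0\}$, that is, when it is separating.

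Next I would treat (iii)\,$\Rightarrow$\,(ii), a trivial weakening, and then (iii)\,$\Rightarrow$\,(iv). For the latter it suffices, by (iv)\,$\Leftrightarrow$\,(v), to produce a separating family of finite-dimensional boundary representations. Fix $a\neq 0$ and, as above, a boundary representation $\beta$ with $\beta(a)\neq 0$; take its approximation $(\beta_\lambda)$ as in (iii), so that some $\beta_\lambda$ satisfies $\beta_\lambda(a)\neq 0$ and has the essential unique extension property. Since a $*$-representation vanishes on the orthogonal complement of its essential subspace, the restriction $\rho_\lambda$ of $\beta_\lambda$ to that subspace still satisfies $\rho_\lambda(a)\neq 0$, has finite-dimensional image, and enjoys the ordinary unique extension property. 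Decomposing $\rho_\lambda$ into irreducibles and invoking Lemma~\ref{L:directsumUEP}(i), each irreducible summand (sitting on a reducing subspace) is a finite-dimensional boundary representation, and at least one is nonzero on $a$. Thus finite-dimensional boundary representations separate $\rC^*_e(\M)$, giving (v) and hence (iv).

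The heart of the matter is (iv)\,$\Rightarrow$\,(iii), and this is where I expect the main difficulty. Note first that (iv)\,$\Rightarrow$\,(v)\,$\Rightarrow$\,(i), so under (iv) the algebra $\rC^*_e(\M)$ is RFD. Let $\beta$ be a boundary representation on $\H_\beta$. The idea is to manufacture the required approximation out of boundary representations themselves: I would fix a separating family $\{\gamma_i\}$ of finite-dimensional boundary representations (available by (v)) and build each approximant as an isometric compression $\beta_\lambda = V_\lambda\,\sigma_\lambda(\cdot)\,V_\lambda^*$, where $\sigma_\lambda$ is a finite direct sum of the $\gamma_i$ and $V_\lambda\colon\H_{\sigma_\lambda}\to\H_\beta$ is an isometry. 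Each $\sigma_\lambda$ is unital with finite-dimensional image and, by Lemma~\ref{L:directsumUEP}(ii), has the unique extension property; Lemma~\ref{L:UEPisom} then guarantees that $\beta_\lambda$ has the essential unique extension property, while its image is manifestly finite-dimensional. The remaining and genuinely hard point is the \emph{approximation} itself: one must choose $\sigma_\lambda$ and $V_\lambda$ so that $\|\beta_\lambda(a)\xi-\beta(a)\xi\|$ is small for prescribed finite sets of elements $a$ and vectors $\xi$, indexing the net by these data. This is an Exel--Loring-type matrix-coefficient approximation. Concretely, since the faithful direct sum $\bigoplus_i\gamma_i^{(\infty)}$ weakly contains $\beta$ (the states coming from finite-dimensional boundary representations being weak-$*$ dense in the pure states by (iv) and Lemma~\ref{L:densestaterep}, hence dense in all states), the vector states of $\beta$ are approximated weak-$*$ by those of finite subsums of the $\gamma_i$, and one upgrades this to strong pointwise approximation on $\H_\beta$ by a standard perturbation of the associated Stinespring/GNS data, absorbing the prescribed vectors into the isometry $V_\lambda$. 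Carrying out this final approximation rigorously, while keeping the building blocks boundary representations so that the unique extension property survives, is the principal obstacle; everything else is formal.
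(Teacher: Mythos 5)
Your proposal is correct and follows essentially the same route as the paper: the same cycle of implications, the same use of Lemmas \ref{L:directsumUEP} and \ref{L:UEPisom} to preserve the (essential) unique extension property under direct sums and isometric conjugation, and the same Exel--Loring strategy for (iv) $\Rightarrow$ (iii). The step you flag as the principal obstacle is closed in the paper exactly as you anticipate: since $\beta$ is irreducible, a unit vector yields a pure state, which by Lemma \ref{L:densestaterep} is a weak-$*$ limit of pure states whose GNS representations are finite-dimensional boundary representations, and the isometries $V_\alpha$ are then produced by invoking the perturbation argument in the proof of \cite[Theorem 2.4]{EL1992}.
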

\begin{proof}

{\rm(v)} $\Longrightarrow$ {\rm(iv)}: This is an immediate consequence of the definition of the topology on the spectrum of $\rC^*_e(\M)$. 

{\rm(iv)} $\Longrightarrow$ {\rm(iii)}: This argument closely follows the ideas of \cite{EL1992}.  Let $\F$ denote the set of pure states on $\rC^*_e(\M)$ whose GNS representation is a finite-dimensional boundary representation for $\M$. By assumption, we may invoke Lemma \ref{L:densestaterep} to find that $\F$ is weak-$*$ dense in  the pure states on $\rC^*_e(\M)$. 

Let $\pi:\rC^*_e(\M)\to B(\H)$ be a boundary representation for $\M$. For our purposes, it is no loss of generality  to assume that $\H$ is infinite-dimensional. Let $\xi\in \H$ be a unit vector. Let $\phi:\rC^*_e(\M)\to\bC$ be the state defined as
\[
\phi(t)=\langle \pi(t)\xi,\xi \rangle, \quad t\in \rC^*_e(\M).
\]
Since $\pi$ is irreducible, $\phi$ is a pure state. By the previous paragraph, there is a net $(\psi_\alpha)$ of states in $\F$ that converges to $\phi$ in the weak-$*$ topology. Let $\sigma_\alpha:\rC^*_e(\M)\to B(\H_\alpha)$ be the GNS representation of $\psi_\alpha$. Then, $\sigma_\alpha$ is a finite-dimensional boundary representation for $\M$. Arguing as in the proof of \cite[Theorem 2.4]{EL1992}, we find an isometry $V_\alpha:\H_\alpha\to \H$ such that 
\[
\lim_\alpha V_\alpha \sigma_\alpha(t) V_\alpha^* h=\pi(t)h, \quad t\in \rC_e^*(\M), h\in \H.
\]
By Lemma \ref{L:UEPisom}, we see that the net $(V_\alpha \sigma_\alpha(\cdot)V_\alpha^*)_\alpha$ is a finite-dimensional approximation for $\pi$ consisting of $*$-representations with the essential unique extension property with respect to $\M$.

{\rm(iii)} $\Longrightarrow$ {\rm(v)}: Let $t\in \rC^*_e(\M)$ be a non-zero element. By Lemma \ref{L:bdrysepequiv}, there is a boundary representation  $\pi:\rC^*_e(\M)\to B(\H_\pi)$ for $\M$ with $\pi(t)\neq 0$. Let $\xi\in \H_\pi$ be a unit vector such that $\pi(t)\xi\neq 0$. By assumption, there is a finite-dimensional approximation
\[
\pi_\lambda:\rC^*_e(\M)\to B(\H_\pi), \quad \lambda\in\Lambda
\] 
for $\pi$ consisting of $*$-representations with the essential unique extension property with respect to $\M$. For each $\lambda\in \Lambda$, let $\E_\lambda=\pi_\lambda(\rC^*_e(\M))\H_\pi$ and note that this is a finite-dimensional subspace. Let $\rho_\lambda:\rC^*_e(\M)\to B(\E_\lambda)$ be the unital finite-dimensional $*$-representation defined as
\[
\rho_\lambda(t)=\pi_\lambda(t)|_{\E_\lambda}, \quad t\in \rC^*_e(\M).
\]
Then, $\rho_\lambda$ has the unique extension property with respect to $\M$. We see that
\[
\lim_{\lambda\in\Lambda}\rho_\lambda(t)P_{\E_\lambda}\xi=\lim_{\lambda\in\Lambda}\pi_\lambda(t)\xi=\pi(t)\xi\neq 0
\]
whence there is $\lambda\in \Lambda$ such that $\rho_\lambda(t)\neq 0$. Since $\rho_\lambda$ is finite-dimensional, it can be written as the direct sum of boundary representations for $\M$ by Lemma \ref{L:directsumUEP}. In particular, there is a boundary representation $\beta$ for $\M$ on $\rC^*_e(\M)$ such that $\beta(t)\neq 0$. We conclude that there is a set of finite-dimensional boundary representations for $\M$ which is separating for $\rC^*_e(\M)$.

{\rm(iii)} $\Longrightarrow$ {\rm(ii)}: Trivial.

{\rm(ii)} $\Longrightarrow$ {\rm(i)}:  
%An argument similar to that used in the proof of {\rm(iii)} $\Longrightarrow$ {\rm(v)} shows that there is a set of finite-dimensional $*$-representations of $\rC^*_e(\M)$ which is separating for $\rC^*_e(\M)$.
Let $t\in \rC^*_e(\M)$ be a non-zero element. By Lemma \ref{L:bdrysepequiv}, there is a boundary representation  $\pi:\rC^*_e(\M)\to B(\H_\pi)$ for $\M$ with $\pi(t)\neq 0$. Let $\xi\in \H_\pi$ be a unit vector such that $\pi(t)\xi\neq 0$. By assumption, there is a finite-dimensional approximation
\[
\pi_\lambda:\rC^*_e(\M)\to B(\H_\pi), \quad \lambda\in\Lambda
\] 
for $\pi$.  For each $\lambda\in \Lambda$, let $\E_\lambda=\pi_\lambda(\rC^*_e(\M))\H_\pi$ and note that this is a finite-dimensional subspace. Let $\rho_\lambda:\rC^*_e(\M)\to B(\E_\lambda)$ be the unital finite-dimensional $*$-representation defined as
\[
\rho_\lambda(t)=\pi_\lambda(t)|_{\E_\lambda}, \quad t\in \rC^*_e(\M).
\]
We see that
\[
\lim_{\lambda\in\Lambda}\rho_\lambda(t)P_{\E_\lambda}\xi=\lim_{\lambda\in\Lambda}\pi_\lambda(t)\xi=\pi(t)\xi\neq 0
\]
whence there is $\lambda\in \Lambda$ such that $\rho_\lambda(t)\neq 0$.  We conclude that there is a set of finite-dimensional $*$-representations of $\rC^*_e(\M)$ which is separating for $\rC^*_e(\M)$.

{\rm(i)} $\Longrightarrow$ {\rm(ii)}:  This is an immediate consequence of Theorem \ref{T:EL}.

\end{proof}

Recall that Question \ref{Q:main} asks whether $\rC^*_e(\A)$ being RFD forces the existence of even a \emph{single} finite-dimensional boundary representation. This offers some perspective regarding the potential validity of the implication {\rm(i)} $\Longrightarrow$ {\rm(v)} above.

We make a few related remarks.
Inside the spectrum of $\rC^*_e(\M)$, let $\D$ denote the set of unitary equivalence classes of finite-dimensional irreducible $*$-representations, and let $\B$ denote the set of unitary equivalence classes of boundary representations for $\M$. It follows from Theorems \ref{T:EL} and \ref{T:density} that both these subsets are dense, but we do not know if these two large sets intersect non-trivially. Finer topological properties may be useful in resolving this. Indeed, inspired by the statement of the Baire Category theorem, it is natural to wonder if the sets are of type $G_\delta$ for instance. This seems unlikely, as  \cite[Proposition 3.6.3]{dixmier 1977} implies that $\D$ is in fact of type $F_\sigma$ in $\widehat{\rC^*_e(\M)}$.

As discussed in Subsection \ref{SS:RFD}, there are examples of unital $\rC^*$-algebras that are RFD but not liminal. In particular, the set $\D$ defined above is a proper subset of the spectrum in this case. Moreover, the set $\B$ defined above can also be a proper subset of the spectrum, even in the classical situation of a uniform algebra: there are known examples of uniform algebras on compact metric spaces where the Choquet boundary is a proper (albeit dense) subset of the Shilov boundary (see for instance \cite[page 42]{phelps2001}).

\subsection{$\rC^*$-liminality}\label{SS:C*lim}

In this subsection, we approach our main problem from another angle. Herein, we study unital operator spaces with the property that \emph{all} boundary representations are finite-dimensional. The hope is that a thorough understanding of this extremal situation may shed some light on Question \ref{Q:main}.

A unital operator space $\M$ is said to be \emph{$\rC^*$-liminal} if every boundary representation for $\M$ on $\rC^*_e(\M)$ is finite-dimensional. 
When $\M$ is a $\rC^*$-algebra, a boundary representation is simply an irreducible $*$-representation, so this notion coincides with the usual one introduced in Subsection \ref{SS:RFD}. We remark that a certain related notion of what one may call ``$\rC^*$-subhomogeneity" was studied recently in \cite{AHMR2020subh} in a different context.
 
 It follows from Theorem \ref{T:CS} that a unital $\rC^*$-algebra is liminal if and only if it is NFD. Along similar lines, Theorem \ref{T:bdryexist} implies that a $\rC^*$-liminal unital operator algebra is necessarily NFD. But the converse is false in general. We illustrate this below (Example \ref{E:fdimnotNFD}) with a rather pathological example of a finite-dimensional operator algebra  with a unique boundary representation (up to unitary equivalence) and such that this boundary representation is infinite-dimensional. As preparation, we need a basic fact.

\begin{lemma}\label{L:uppertriang}
Let $\M\subset B(\H)$ be a finite-dimensional unital operator space. Consider the set $\B_\M\subset \bM_2(B(\H))$ consisting of those elements of the form
\[
\begin{bmatrix}
\lambda I & x \\
0 & \mu I
\end{bmatrix}
\]
where $\lambda,\mu\in \bC, x\in \M$. Then, $\B_\M$ is a finite-dimensional unital operator algebra and
$
\rC_e^*(\B_\M)\cong \bM_2(\rC_e^*(\M)).
$
Moreover, every boundary representation for $\B_\M$ on $\bM_2(\rC^*_e(\M))$  is  unitarily equivalent to one of the form $\beta^{(2)}$ for some boundary representation $\beta$ for $\M$ on $\rC^*_e(\M)$.
\end{lemma}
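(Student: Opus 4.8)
The plan is to reduce the lemma to a classification of the boundary representations of $\B_\M$, after disposing of the two structural claims. Writing $E_{ij}$ for the matrix units of $\bM_2$, a typical element of $\B_\M$ is $\lambda E_{11}+\mu E_{22}+E_{12}\otimes x$ with $\lambda,\mu\in\bC$ and $x\in\M$. Multiplying two such matrices produces another of the same shape (using that $\M$ is a linear subspace), and the unit $E_{11}+E_{22}$ belongs to $\B_\M$; since $\dim\B_\M=\dim\M+2<\infty$ and $\B_\M\subset B(\H\oplus\H)$, it is an automatically closed, finite-dimensional unital operator algebra. For the generated $\rC^*$-algebra I would use that $u:=E_{12}\otimes I\in\B_\M$ (as $I\in\M$): from $u^*(E_{12}\otimes x)=E_{22}\otimes x$, the adjoints, and products within the corners, one checks that $\rC^*(\B_\M)$ contains every $E_{ij}\otimes\rC^*(\M)$, hence $\rC^*(\B_\M)=\bM_2(\rC^*(\M))$.

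\emph{Classifying boundary representations.} By \cite{hopenwasser1973}, every irreducible $*$-representation of $\bM_2(\rC^*(\M))=\rC^*(\B_\M)$ is unitarily equivalent to some $\Phi^{(2)}$ with $\Phi$ an irreducible $*$-representation of $\rC^*(\M)$, and $\Phi$ is irreducible exactly when $\Phi^{(2)}$ is. I would then show that $\Phi^{(2)}$ is a boundary representation for $\B_\M$ if and only if $\Phi$ is a boundary representation for $\M$. One direction is immediate: if $\Phi^{(2)}$ has the unique extension property and $\theta:\rC^*(\M)\to B(\H_\Phi)$ is any unital completely positive extension of $\Phi|_\M$, then $\theta^{(2)}=\id_{\bM_2}\otimes\theta$ is a unital completely positive extension of $\Phi^{(2)}|_{\B_\M}$, so $\theta^{(2)}=\Phi^{(2)}$ and thus $\theta=\Phi$.

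\emph{The main obstacle.} The hard direction is the converse, and I expect it to be the crux. Assume $\Phi$ is a boundary representation for $\M$, and let $\Psi:\bM_2(\rC^*(\M))\to B(\H_\Phi\oplus\H_\Phi)$ be an arbitrary unital completely positive extension of $\Phi^{(2)}|_{\B_\M}$. The idea is to force $\Psi$ to be $\bM_2$-bimodular by a multiplicative-domain argument. Since $E_{11}$, $E_{22}$, $u=E_{12}\otimes I$, and also $u^*u=E_{22}$, $uu^*=E_{11}$ all lie in $\B_\M$, where $\Psi$ coincides with the $*$-homomorphism $\Phi^{(2)}$, Choi's criterion \cite{paulsen2002} places $u$ in the multiplicative domain of $\Psi$; the delicate point is that $u^*=E_{21}\otimes I\notin\B_\M$, yet it still enters the multiplicative domain because the latter is a $*$-subalgebra. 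Together with $E_{11},E_{22}$ these generate the scalar copy $\bM_2\otimes I$, so $\Psi$ is bimodular over it, which forces $\Psi=\id_{\bM_2}\otimes\theta$ for a unital completely positive $\theta:\rC^*(\M)\to B(\H_\Phi)$. Evaluating on $E_{12}\otimes x$ gives $\theta|_\M=\Phi|_\M$, so the unique extension property of $\Phi$ yields $\theta=\Phi$ and hence $\Psi=\Phi^{(2)}$.

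\emph{Conclusion.} Having identified the boundary representations of $\B_\M$ on $\rC^*(\B_\M)$ as precisely the $\Phi^{(2)}$ with $\Phi$ a boundary representation for $\M$, and noting $\ker\Phi^{(2)}=\bM_2(\ker\Phi)$, the Shilov ideal of $\B_\M$ equals $\bigcap_\Phi\bM_2(\ker\Phi)=\bM_2(\Sigma_\M)$, where $\Sigma_\M$ is the Shilov ideal of $\M$. Hence $\rC^*_e(\B_\M)\cong\bM_2(\rC^*(\M))/\bM_2(\Sigma_\M)\cong\bM_2(\rC^*(\M)/\Sigma_\M)=\bM_2(\rC^*_e(\M))$. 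Finally, each boundary representation $\Phi^{(2)}$ factors through the Shilov ideal; writing $\Phi=\beta\circ q$ for a boundary representation $\beta$ for $\M$ on $\rC^*_e(\M)$, the induced boundary representation of $\B_\M$ on $\bM_2(\rC^*_e(\M))$ is $\beta^{(2)}$, which is the ``moreover'' assertion.
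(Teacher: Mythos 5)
Your proof is correct, but it takes a more self-contained route than the paper. The paper's own proof is essentially a pair of citations: it obtains $\rC^*_e(\B_\M)\cong\bM_2(\rC^*_e(\M))$ directly from \cite[Lemma 3.1]{CR2018rfd}, and then gets the description of the boundary representations by invoking the main theorem of \cite{hopenwasser1973} as a black box. You instead prove the key equivalence --- $\Phi^{(2)}$ is a boundary representation for $\B_\M$ if and only if $\Phi$ is one for $\M$ --- from scratch via Choi's multiplicative domain: since $u=E_{12}\otimes I$, $u^*u=E_{22}$ and $uu^*=E_{11}$ all lie in $\B_\M$, any UCP extension $\Psi$ of $\Phi^{(2)}|_{\B_\M}$ is multiplicative on the copy of $\bM_2\otimes I$, hence of the form $\id_{\bM_2}\otimes\theta$, and the unique extension property of $\Phi$ finishes the job. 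You also reverse the order of deduction, classifying boundary representations first and then deriving the envelope isomorphism from the Shilov ideal computation $\Sigma_{\B_\M}=\bM_2(\Sigma_\M)$, whereas the paper gets the envelope first. What your approach buys is transparency and independence from the precise hypotheses of Hopenwasser's theorem (whose statement concerns subspaces relative to systems of matrix units, and whose applicability to $\B_\M$ a reader would otherwise have to check); what it costs is length. One small presentational point: your classification is carried out over $\rC^*(\B_\M)=\bM_2(\rC^*(\M))$, while the ``moreover'' concerns boundary representations on $\bM_2(\rC^*_e(\M))$; the cleanest fix is to note that your multiplicative-domain argument applies verbatim with $\rC^*(\M)$ replaced by $\rC^*_e(\M)$ and $\M$ by its image there, rather than routing through the factorization of boundary representations across the quotient map.
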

\begin{proof}
A routine calculation shows that $\B_\M$ is a finite-dimensional unital operator algebra and that $\rC^*(\B_\M)=\bM_2(\rC^*(\M))$. By \cite[Lemma 3.1]{CR2018rfd} we find
$
\rC_e^*(\B_\M)\cong \bM_2(\rC_e^*(\M)).
$ 
Let $\pi:\bM_2(\rC^*_e(\M))\to B(\H_\pi)$ be a boundary representation for $\B_\M$. Then, there is a boundary representation $\beta:\rC^*_e(\M)\to B(\H_\beta)$ for $\M$ with the property that $\pi$ is unitarily equivalent to $\beta^{(2)}$ by the main result of \cite{hopenwasser1973}. 
\end{proof}

We can now give the announced example of a finite-dimensional operator algebra with no finite-dimensional boundary representation.

 \begin{example}\label{E:fdimnotNFD}
 By \cite[Theorem 6.2]{CH2018}, there exists a Hilbert space $\H$ and an operator $T\in B(\H)$ with the property that the norm-closed unital algebra $\A$ generated by $T$ is infinite-dimensional and has the identity representation as its only boundary representation (up to unitary equivalence). Let $\M=\bC I+\bC T$. Then, $\rC^*(\M)=\rC^*(\A)$ and it is easily verified that a boundary representation for $\M$ on $\rC^*(\M)$ is necessarily a boundary representation for $\A$ as well. Thus, the identity representation of $\rC^*(\M)$ is the only boundary representation for $\M$, up to unitary equivalence.  In particular, $\rC^*_e(\M)\cong \rC^*(\M)$.
 
 Let $\B_\M\subset \bM_2(B(\H))$ be the finite-dimensional unital operator algebra as in Lemma \ref{L:uppertriang}. Then, $\rC^*_e(\B_\M)\cong \bM_2(\rC^*(\M))$ and up to unitary equivalence, the only boundary representation for $\B_\M$ on $\bM_2(\rC^*(\M))$ is $\id^{(2)}$, which is infinite-dimensional. Hence $\B_\M$ is not $\rC^*$-liminal. Note also that \cite[Theorem 3.5]{CR2018rfd} implies that $\B_\M$ is NFD. 
   \qed
 \end{example}
 
 In fact, things can be even worse, and the $\rC^*$-envelope of a finite-dimensional unital operator algebra can have no finite-dimensional $*$-representations whatsoever.
 
  \begin{example}\label{E:Cuntz}
 Let $\H$ be a Hilbert space and let $U,V\in B(\H)$ be isometries such that $UU^*+VV^*=I$. Let $\M=\bC I+\bC U+\bC V$. Then, $\rC^*(\M)$ is $*$-isomorphic to the Cuntz algebra $\O_2$, which is infinite-dimensional and simple \cite[Theorem V.4.7]{davidson1996}.  In particular, the Shilov ideal of $\M$ in $\rC^*(\M)$ is trivial, so $\rC^*_e(\M)\cong \rC^*(\M)=\O_2$. 
 
Let $\B_\M\subset \bM_2(B(\H))$ be the finite-dimensional unital operator algebra as in Lemma \ref{L:uppertriang}. We find  $\rC^*_e(\B_\M)\cong \bM_2(\O_2)$ and $\rC^*_e(\B_\M)$ has no finite-dimensional $*$-representation.
   \qed
 \end{example}
 
 As discussed in the introduction, the previous pair of examples illustrates some of the subtleties inherent to Question \ref{Q:main}. In particular, the existence of finite-dimensional boundary representations appears to lie much deeper than the mere finite-dimensionality of the original operator algebra. It is conceivable that Question \ref{Q:main} may thus have a negative answer in general. An operator algebra witnessing this pathology would need to have an RFD $\rC^*$-envelope that is not liminal.  Unfortunately, we do not have a large supply of examples of such $\rC^*$-algebras. A standard one is the full group $\rC^*$-algebra of the free group on two generators, as mentioned in Subsection \ref{SS:RFD}. Generally speaking  however, natural examples from group theory tend to admit finite-dimensional boundary representations, as we show next. 

\begin{example}\label{E:groups}
Let $G$ be a discrete group that is not virtually abelian. As explained in \cite{CS2019}, it then follows from \cite{thoma1964},\cite{thoma1968} that the full group $\rC^*$-algebra $\rC^*(G)$ is not liminal. Fix some generating subset $S$ for $G$. Let $\A\subset \rC^*(G)$ be the unital subalgebra generated by the unitaries  corresponding to the elements of $S$. It is readily seen that $\rC^*(\A)=\rC^*(G)$. Moreover, by definition we see that $\A$ contains unitaries that generate $\rC^*(G)$ as a $\rC^*$-algebra, so it follows that every irreducible $*$-representation of $\rC^*(G)$ is a boundary representation for $\A$ (see for instance \cite[Lemma 5.5]{kavruk2014}). Consequently $\rC^*_e(\A)\cong \rC^*(G)$. But $\rC^*(G)$ always admits the character corresponding to the trivial representation of $G$, which is then a finite-dimensional boundary representation for $\A$.
\qed
\end{example}

Next, we exhibit another method for constructing a unital operator algebra whose $\rC^*$-envelope is RFD and not liminal, while admitting many finite-dimensional boundary representations.

\begin{example}\label{E:tensorprodC(X)}
Let $X$ be a compact metric space and let $\A\subset C(X)$ be a uniform algebra. Assume that $X$ is the Shilov boundary of $\A$. Let $\fR$ be a unital RFD $\rC^*$-algebra which is not liminal.  It follows from \cite{hopenwasser1978} that the boundary representations for $\A\otimes_{\min} \fR$ on $C(X)\otimes_{\min} \fR$ are precisely those of the form $\eps_\xi\otimes \pi$ where $\pi$ is an irreducible $*$-representation of $\fR$ and $\eps_\xi$ is the character on $C(X)$ of evaluation at a point $\xi$ in the Choquet boundary of $\A$.
In particular, because $\fR$ is RFD, we conclude that there are many finite-dimensional boundary representations for $\A\otimes_{\min} \fR$ on $C(X)\otimes_{\min} \fR$.

Next, we note that $C(X)\otimes_{\min}\fR\cong C(X,\fR)$ \cite[Proposition 12.5]{paulsen2002}. From this identification, it is readily seen that the Shilov ideal of $\A\otimes_{\min} \fR$ is trivial, so that 
\[
\rC^*_e(\A\otimes_{\min} \fR)\cong C(X)\otimes_{\min}\fR\cong C(X,\fR).
\]
In particular, we see that the $\rC^*$-envelope is RFD and not liminal. 
\qed
\end{example}

Because boundary representations always factor through the $\rC^*$-envelope, a unital operator space $\M$ is $\rC^*$-liminal whenever $\rC^*_e(\M)$ is liminal in the classical  sense. In the next development we obtain a sort of partial converse. First, we give a characterization of $\rC^*$-liminality in terms of certain pure linear maps.

Given a unital operator space $\M$ and a unital completely contractive map $\psi:\M\to B(\H_\psi)$, we say that $\psi$ is \emph{finite-dimensional} if there exists a finite-dimensional unital $*$-representation $\sigma:\rC_e^*(\M)\to B(\H_\sigma)$ and an isometry $V:\H_\psi\to \H_\sigma$ such that
\[
\psi(a)=V^*\sigma(a)V, \quad a\in \M.
\]
Note in particular that $\H_\psi$ must be finite-dimensional in this case, so only matrix states can ever be finite-dimensional in this sense.

\begin{proposition}\label{P:fdimbdrypure}
Let $\M$ be a unital operator space. Then, $\M$ is $\rC^*$-liminal if and only if all pure unital completely contractive maps on $\M$ are finite-dimensional.
\end{proposition}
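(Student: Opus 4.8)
The plan is to exploit the dictionary between boundary representations for $\M$ and pure unital completely contractive maps on $\M$, together with two facts already recorded: that a boundary representation always factors through $\rC^*_e(\M)$, and that such a representation is finite-dimensional exactly when its Hilbert space is. Throughout I would pass freely between a unital completely contractive map $\psi$ on $\M$ and its canonical unital completely positive extension $\tilde\psi$ to the operator system $\fS=\M+\M^*$. The two implications of the equivalence are then handled by \emph{dilation} and \emph{restriction} respectively.

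For the forward implication, assume $\M$ is $\rC^*$-liminal and let $\psi:\M\to B(\H_\psi)$ be pure. I would dilate $\psi$ to a boundary representation using the dilation theorem of Davidson--Kennedy \cite{davidsonkennedy2015}: there is a boundary representation $\beta$ for $\M$ on $\rC^*(\M)$ and an isometry $V:\H_\psi\to\H_\beta$ with $\psi(a)=V^*\beta(a)V$ for all $a\in\M$. Since $\beta$ factors through $\rC^*_e(\M)$, it yields an irreducible $*$-representation $\sigma$ of $\rC^*_e(\M)$ acting on $\H_\beta$, and $\rC^*$-liminality forces $\H_\beta$ to be finite-dimensional. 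Identifying $\M$ with its image in $\rC^*_e(\M)$, the relation $\psi(a)=V^*\sigma(a)V$ then exhibits $\psi$ as finite-dimensional in the required sense.

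For the converse, assume every pure map on $\M$ is finite-dimensional and let $\beta$ be a boundary representation for $\M$ on $\rC^*_e(\M)$; I claim $\psi:=\beta|_\M$ is pure. Given a completely positive $\phi\le\tilde\psi=\beta|_\fS$ on $\fS$, I would extend both $\phi$ and $\tilde\psi-\phi$ to completely positive maps on $\rC^*_e(\M)$ via Arveson's extension theorem; their sum is a unital completely positive map agreeing with $\beta$ on $\M$, so the unique extension property of $\beta$ forces this sum to equal $\beta$. Hence the extension of $\phi$ is dominated by $\beta$, and since $\beta$ is an irreducible $*$-representation it is pure as a map on $\rC^*_e(\M)$ (its minimal Stinespring dilation is $\beta$ itself, \cite[Corollary 1.4.3]{arveson1969}); consequently $\phi$ is a scalar multiple of $\tilde\psi$, proving $\psi$ pure. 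By hypothesis $\psi$ is then finite-dimensional, so $\H_\beta$ embeds isometrically into a finite-dimensional space and $\beta$ is finite-dimensional. As $\beta$ was arbitrary, $\M$ is $\rC^*$-liminal.

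The step I expect to require the most care is the purity of the restriction $\beta|_\M$ in the converse: it is precisely here that the unique extension property is indispensable, since without it a completely positive domination on $\fS$ need not descend from a domination on $\rC^*_e(\M)$, and the passage to the commutant $\beta(\rC^*_e(\M))'=\bC I$ would be unavailable. The only genuinely deep external input is the Davidson--Kennedy dilation used in the forward direction; I would take care to invoke the version valid for pure maps with \emph{arbitrary} (not merely finite-dimensional) domain, since the finite-dimensionality of $\H_\psi$ is exactly the conclusion being extracted rather than an a priori hypothesis.
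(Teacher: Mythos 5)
Your proof is correct and follows essentially the same route as the paper: the forward direction dilates a pure map to a boundary representation via Davidson--Kennedy exactly as in the text, and the converse rests on the purity of the restriction $\beta|_\M$, which the paper simply cites from the literature while you prove it inline (your Arveson-extension/unique-extension-property argument is a valid proof of that cited fact). No gaps.
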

\begin{proof}
Assume first that $\M$ is $\rC^*$-liminal. Let $\psi:\M\to B(\H_\psi)$ be a pure unital completely contractive map. 
Applying \cite[Theorem 2.4]{davidsonkennedy2015}, we see that $\psi$ dilates to a boundary representation for $\M$ on $\rC^*_e(\M)$, and thus $\psi$ is finite-dimensional.

Conversely, assume that all pure unital completely contractive maps on $\M$ are finite-dimensional. Let $\pi:\rC_e^*(\M)\to B(\H_\pi)$ be a boundary representation for $\M$. 
%Since $\pi$ is irreducible, it is a pure unital completely positive map by \cite[Theorem 1.4.2]{arveson1969}.
 The restriction $\pi|_\M$ is a pure unital completely contractive map \cite[Proposition 2.12]{FT2020}, and thus it is finite-dimensional by assumption. In particular, $\H_\pi$ must be finite-dimensional. 
% Therefore,   there exists a finite-dimensional unital $*$-representation $\sigma:\rC^*(\M)\to B(\H_\sigma)$ and an isometry $V:\H_\psi\to \H_\sigma$ such that
%\[
%\pi(a)=V^*\sigma(a)V, \quad a\in \M.
%\]
%Next, note that $\pi|_\M$ is necessarily maximal in the dilation order \cite[Proposition 2.4]{arveson2008}, so $\pi$ is a direct summand of $\sigma$. Hence, $\pi$ is a finite-dimensional $*$-representation.
\end{proof}

We record a consequence of the previous result.

\begin{corollary}\label{C:fdimbdryprim}
Let $\M$ be a $\rC^*$-liminal unital operator space and let $\fB\subset \M$ be a $\rC^*$-algebra with the same unit as $\M$. Then, $\fB$ is liminal.
\end{corollary}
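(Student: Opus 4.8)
The plan is to unwind the definition of liminality and reduce everything to Proposition~\ref{P:fdimbdrypure}. Recall that a unital $\rC^*$-algebra is liminal precisely when all of its irreducible $*$-representations are finite-dimensional, as recalled after Theorem~\ref{T:CS}; hence it suffices to fix an irreducible $*$-representation $\rho:\fB\to B(\H_\rho)$ and prove that $\H_\rho$ is finite-dimensional. First I would record that $\fB$ sits inside $\rC^*_e(\M)$ as a unital $\rC^*$-subalgebra. Writing $\rC^*_e(\M)\cong \rC^*(\M)/\Sigma$ for the Shilov ideal $\Sigma$ and letting $q$ denote the quotient map, the inclusion $\fB\subset\rC^*(\M)$ composed with $q$ is a $*$-homomorphism which, being isometric on $\M\supset\fB$, is injective on $\fB$. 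Thus $u|_\fB=q|_\fB$ is a $*$-isomorphism onto a unital $\rC^*$-subalgebra of $\rC^*_e(\M)$, and any $*$-representation of $\rC^*_e(\M)$ restricts to a $*$-representation of $\fB$ through this identification.

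Next I would manufacture a pure completely contractive map on $\M$ out of $\rho$. Choose a unit vector $\eta\in\H_\rho$ and set $\omega(b)=\langle\rho(b)\eta,\eta\rangle$, a pure state of $\fB$ (pure because $\rho$ is irreducible). Regarding $\fB$ as an operator subsystem of $\fS=\{a+b^*:a,b\in\M\}$, the set of states on $\fS$ restricting to $\omega$ on $\fB$ is nonempty by Arveson's extension theorem, and is weak-$*$ compact and convex; let $\psi$ be an extreme point of this set. Purity of $\omega$ then forces $\psi$ to be extreme in the entire state space of $\fS$: if $\psi=\tfrac12(\phi_1+\phi_2)$ with $\phi_1,\phi_2$ states on $\fS$, then restricting to $\fB$ and using that $\omega$ is pure gives $\phi_1|_\fB=\phi_2|_\fB=\omega$, whence $\phi_1=\phi_2=\psi$ by extremality within the fiber. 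Consequently $\psi|_\M$ is a pure unital completely contractive map on $\M$ whose unique completely positive extension to $\fS$ agrees with $\omega$ on $\fB$.

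Finally I would invoke Proposition~\ref{P:fdimbdrypure}: since $\M$ is $\rC^*$-liminal, the pure map $\psi|_\M$ is finite-dimensional, so there exist a finite-dimensional $*$-representation $\sigma:\rC^*_e(\M)\to B(\H_\sigma)$ and a unit vector $\xi\in\H_\sigma$ with $\psi(a)=\langle\sigma(a)\xi,\xi\rangle$ for $a\in\M$. Restricting to $\fB$ and using the identification above, $\tau=\sigma\circ(u|_\fB)$ is a finite-dimensional $*$-representation of $\fB$ with $\omega(b)=\langle\tau(b)\xi,\xi\rangle$. Thus $\omega$ is a vector state of the finite-dimensional representation $\tau$, so its GNS representation---which is unitarily equivalent to $\rho$, as $\omega$ is pure---is a subrepresentation of $\tau$ and hence finite-dimensional. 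This shows $\H_\rho$ is finite-dimensional, and since $\rho$ was arbitrary, $\fB$ is liminal.

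The step I expect to require the most care is the construction of the pure extension $\psi$ together with the verification that purity descends correctly: one must check that the extension fiber is genuinely nonempty and weak-$*$ compact, and that an extreme point of the fiber is extreme in the full state space of $\fS$, so that the paper's domination definition of purity applies and Proposition~\ref{P:fdimbdrypure} can be triggered. The remaining bookkeeping---that restricting a finite-dimensional representation of $\rC^*_e(\M)$ to the embedded copy of $\fB$ yields a bona fide finite-dimensional $*$-representation, and that the GNS representation of a pure vector state is a subrepresentation---is routine.
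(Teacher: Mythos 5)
Your proof is correct, and it follows the same basic strategy as the paper: manufacture a pure unital completely contractive map on $\M$ out of the given irreducible $*$-representation of $\fB$, and then invoke Proposition~\ref{P:fdimbdrypure}. The difference lies in how the pure extension is obtained. The paper works directly at the level of the representation: it cites \cite[Theorem 1.4.2]{arveson1969} to see that an irreducible $*$-representation $\pi$ of $\fB$ is itself a pure unital completely positive map, and then cites \cite[Corollary 2.3]{kleski2014bdry} to extend $\pi$ to a pure unital completely contractive map on $\M$, so that Proposition~\ref{P:fdimbdrypure} immediately forces $\H_\pi$ to be finite-dimensional. You instead pass to a vector state $\omega$ of $\rho$, extend it by hand to a pure state of $\fS$ via the standard extreme-point-of-the-fiber argument, and then recover $\rho$ as the GNS representation of $\omega$ sitting inside a finite-dimensional representation. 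Your route is more self-contained (it replaces the two citations by elementary convexity and GNS arguments, using only that extreme states coincide with pure maps in the paper's domination sense for scalar-valued functionals), at the cost of the extra step identifying the GNS representation of $\omega$ with $\rho$ and realizing it as a subrepresentation of the finite-dimensional $\tau$. All the steps you flag as delicate (nonemptiness and compactness of the fiber, purity descending to the extreme point of the fiber, and the bookkeeping with the embedding $u|_\fB$) check out.
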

\begin{proof}
Let  $\pi$ be an irreducible $*$-representation of $\fB$. By \cite[Theorem 1.4.2]{arveson1969}, we see that $\pi$ is a pure unital completely positive map on $\fB$, and thus it admits a pure extension to a  unital completely contractive map $\pi'$ on $\M$ \cite[Corollary 2.3]{kleski2014bdry}. By Proposition \ref{P:fdimbdrypure}, we see that $\pi'$ is finite-dimensional, so that $\pi$ must act on a finite-dimensional space.
\end{proof}

For $\rC^*$-algebras, the statement of the previous corollary is well known and follows for instance from  Theorem \ref{T:CS}. 

Let $\M$ be a unital operator space. We say that a unital completely contractive map $\psi:\M\to B(\H_\psi)$ is \emph{locally finite-dimensional} if whenever $\N\subset \M$ is a unital finite-dimensional subspace, there exist a finite-dimensional unital $*$-representation $\sigma:\rC_e^*(\M)\to B(\H_\sigma)$ and an isometry $V:\H_\psi\to \H_\sigma$ such that
\[
\psi(a)=V^*\sigma(a)V, \quad a\in \N.
\]
Note once again that this forces $\H_\psi$ to be finite-dimensional. The next result is a minor refinement of a recent result of Hartz and Lupini \cite[Theorem 1.5]{HL2019}. Roughly speaking, it says that all matrix states of a $\rC^*$-liminal operator space are locally finite-dimensional. Thus, locally we can remove the purity requirement of Proposition \ref{P:fdimbdrypure}.

\begin{theorem}\label{T:fdimbdryfdimstates}
Let $\M$ be a $\rC^*$-liminal unital operator space. Then, every matrix state of $\M$ is locally finite-dimensional.
\end{theorem}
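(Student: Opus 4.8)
The plan is to fix a matrix state $\psi:\M\to B(\H_\psi)$ together with a unital finite-dimensional subspace $\N\subset \M$, and to produce a finite-dimensional unital $*$-representation $\sigma:\rC^*_e(\M)\to B(\H_\sigma)$ and an isometry $V:\H_\psi\to \H_\sigma$ with $\psi(a)=V^*\sigma(a)V$ for all $a\in\N$. The guiding idea is that, although $\psi$ itself need not be pure, its restriction to the finite-dimensional subspace $\N$ can be disassembled into finitely many \emph{pure} matrix states, each of which is finite-dimensional precisely because $\M$ is $\rC^*$-liminal.

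The first main step is to invoke the matrix convexity machinery of \cite{HL2019}. The matrix states of $\M$ form a compact matrix convex set whose matrix extreme points correspond to pure unital completely contractive maps into matrix algebras. The classical matrix Krein--Milman theorem only represents a matrix state as a limit of matrix convex combinations of such extreme points; the refinement supplied by \cite[Theorem 1.5]{HL2019} is that, \emph{after restriction to a fixed finite-dimensional subspace}, the representation may be taken to be both finite and exact. Thus I would extract finitely many pure matrix states $\phi_1,\ldots,\phi_k$ of $\M$, say $\phi_j:\M\to B(\H_{\phi_j})$, and an isometry $W:\H_\psi\to \bigoplus_{j=1}^{k}\H_{\phi_j}$ such that
\[
\psi(a)=W^*\left(\bigoplus_{j=1}^{k}\phi_j(a)\right)W, \quad a\in\N.
\]

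The second step is to upgrade each pure piece to a finite-dimensional $*$-representation. Since $\M$ is $\rC^*$-liminal, Proposition~\ref{P:fdimbdrypure} guarantees that every pure unital completely contractive map on $\M$ is finite-dimensional; applying this to each $\phi_j$ produces a finite-dimensional unital $*$-representation $\sigma_j:\rC^*_e(\M)\to B(\H_{\sigma_j})$ and an isometry $V_j:\H_{\phi_j}\to \H_{\sigma_j}$ with $\phi_j(a)=V_j^*\sigma_j(a)V_j$ for all $a\in\M$, in particular for $a\in\N$. Setting $\sigma=\bigoplus_{j=1}^{k}\sigma_j$, which is finite-dimensional as a finite direct sum of finite-dimensional $*$-representations, and $V=\left(\bigoplus_{j=1}^{k}V_j\right)W$, which is an isometry as a composition of isometries, I obtain $\psi(a)=V^*\sigma(a)V$ for all $a\in\N$. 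This is exactly the assertion that $\psi$ is locally finite-dimensional.

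I expect the main obstacle to reside entirely in the first step: extracting from \cite{HL2019} a representation of $\psi|_\N$ that is simultaneously a \emph{finite} direct sum and \emph{exact} on $\N$, rather than merely an approximation by matrix convex combinations as in the Webster--Winkler form of matrix Krein--Milman. Reconciling conventions will also require attention, since \cite{HL2019} is phrased in the language of matrix extreme points; I would therefore pass through the operator-system / unital-completely-contractive correspondence recorded in Subsection~\ref{SS:bdryrep} and identify the relevant extreme objects with pure matrix states via \cite[Theorem 2.4]{davidsonkennedy2015}, so that Proposition~\ref{P:fdimbdrypure} becomes applicable. Once finiteness and exactness on $\N$ are secured, the remainder is formal; the only other point needing care is that the maps produced by Proposition~\ref{P:fdimbdrypure} are genuine $*$-representations of $\rC^*_e(\M)$, so that their direct sum $\sigma$ is a single finite-dimensional $*$-representation of $\rC^*_e(\M)$, as the definition of local finite-dimensionality demands.
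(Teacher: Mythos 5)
Your overall architecture --- decompose the restriction of the matrix state into finitely many pure pieces, use $\rC^*$-liminality to make each pure piece finite-dimensional, then reassemble with a direct sum and a composed isometry --- is exactly the architecture of the paper's proof, and your reassembly step is correct (it is the computation carried out in \cite[Lemma 3.2]{HL2019}, which the paper cites for precisely this purpose). However, the step you yourself flag as ``the main obstacle'' is a genuine gap, and the way you propose to fill it is not quite right. The exact finite matrix convex decomposition into matrix extreme points is \emph{not} available for matrix states of $\M$ itself: for an infinite-dimensional operator space the matrix Krein--Milman theorem only gives closed matrix convex hulls. What \cite[Theorem 2.9]{HL2019} provides is an exact finite decomposition for matrix states of a \emph{finite-dimensional operator system}. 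The paper therefore passes to the operator system $\fS$ generated by $\N$ inside $\rC^*_e(\M)$, extends $\psi|_\N$ to a unital completely positive map $\Psi$ on $\fS$, and decomposes $\Psi$ there; the matrix extreme points obtained are identified with pure matrix states \emph{of $\fS$} via \cite[Theorem B]{farenick2000}.

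This creates the second, more serious problem with your plan: the pure pieces $\phi_j$ you obtain are pure as maps on $\fS$, not on $\M$, so Proposition~\ref{P:fdimbdrypure} does not apply to them as stated. Purity is not preserved under restriction to a subsystem, nor is a pure state of $\fS$ automatically the restriction of a pure state of $\M$. The missing ingredient is Kleski's extension theorem \cite[Corollary 2.3]{kleski2014bdry}: each pure unital completely positive map on $\fS$ admits a \emph{pure} unital completely positive extension to the operator system $\fT$ generated by $\M$. Only after this lift can one dilate to a boundary representation via \cite[Theorem 2.4]{davidsonkennedy2015} and invoke $\rC^*$-liminality to conclude that each piece dilates to a finite-dimensional $*$-representation of $\rC^*_e(\M)$. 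With that extension step inserted, and with the decomposition performed on $\fS$ rather than on $\M$, your argument becomes the paper's proof.
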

\begin{proof}
Throughout the proof, we let $\fT$ denote the operator system generated by $\M$ inside of $\rC^*_e(\M)$. Let $\N\subset \M$ be a unital finite-dimensional subspace and let $\fS$ denote the operator system generated by $\N$ inside of $\rC^*_e(\M)$. Note that $\fS$ is still finite-dimensional.

We start with a preliminary observation. Let $\omega:\fS\to B(\H_\omega)$ be a pure unital completely positive map. It follows from   \cite[Corollary 2.3]{kleski2014bdry} that there is a pure unital completely positive map $\Omega:\fT\to  B(\H_\omega)$ extending $\omega$. In light of \cite[Theorem 2.4]{davidsonkennedy2015}, we know that $\Omega$ can be dilated to a boundary representation for $\M$ on $\rC_e^*(\M)$. By assumption, this means that $\Omega$ (and hence $\omega$) can be dilated to a unital finite-dimensional $*$-representation of $\rC^*_e(\M)$.

 We now turn to proving the desired statement. Let $\psi:\N\to \bM_r$ be a matrix state and let $\Psi$ denote its unique unital completely positive extension to $\fS$. Invoke \cite[Theorem 2.9]{HL2019} (see also \cite[Section 3]{HL2019})  to conclude that $\Psi$ is a finite matrix convex combination of matrix extreme points of the matrix state space of $\fS$. In turn, by \cite[Theorem B]{farenick2000}, we infer that the matrix extreme points among matrix states are pure matrix states on $\fS$. As shown in the previous paragraph, all pure matrix states on $\fS$ can be dilated to a finite-dimensional $*$-representation of $\rC^*_e(\M)$.  Arguing now as in the proof of \cite[Lemma 3.2]{HL2019}, we see that $\Psi$ dilates to a finite-dimensional unital $*$-representation of $\rC^*_e(\M)$. Thus, there is a finite-dimensional  unital $*$-representation $\sigma$ of $\rC^*_e(\M)$ and an isometry $V$ with the property that
\[
\Psi(s)=V^* \sigma(s)V, \quad s\in \fS. 
\]
Restricting to $\N$, this yields the desired statement for $\psi$.
\end{proof}

A few remarks concerning this theorem are in order. First, recall that the motivation behind this result is the question asking whether $\rC^*$-liminality of a unital operator space implies liminality of its $\rC^*$-envelope. Although we do not manage to prove this, Theorem \ref{T:fdimbdryfdimstates} supports this possibility. Indeed, if $\M$ is a $\rC^*$-algebra, then the conclusion is known to imply liminality \cite[Proposition 1.4]{HL2019}. Furthermore, the conclusion of the previous theorem is necessary for the liminality of the $\rC^*$-envelope, by \cite[Theorem 1.5]{HL2019}.

While we do not know if the converse to Theorem \ref{T:fdimbdryfdimstates} holds, we can at least prove the following.

\begin{theorem}\label{T:locfdim}
Let $\M$ be a unital operator space. Assume that every matrix state on $\M$ is locally finite-dimensional. Then, every unital $*$-representation of $\rC_e^*(\M)$ with the unique extension property with respect to $\M$ admits a finite-dimensional approximation. In particular, $\rC^*_e(\M)$ is RFD.
\end{theorem}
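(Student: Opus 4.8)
The plan is to prove the main assertion by an Exel--Loring style compression-and-dilation argument, in which the unique extension property plays the role of a rigidity device, and then to deduce residual finite-dimensionality for free. Fix a unital $*$-representation $\pi:\rC^*_e(\M)\to B(\H)$ with the unique extension property with respect to $\M$. If $\H$ is finite-dimensional then $\pi$ is its own finite-dimensional approximation, so I would assume $\dim \H=\infty$. The idea is to manufacture approximants by compressing $\pi$ to finite-dimensional subspaces of $\H$, recognizing these compressions as matrix states on $\M$, and dilating them back using the hypothesis.

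Concretely, I would index a net by pairs $\lambda=(\N,\F)$, where $\N\subset\M$ is a unital finite-dimensional subspace and $\F\subset\H$ is a finite-dimensional subspace, directed by inclusion in both coordinates. For each $\lambda$ the compression $\psi_\F=P_\F\pi(\cdot)|_\F:\M\to B(\F)$ is a matrix state, so local finite-dimensionality yields a finite-dimensional unital $*$-representation $\sigma_\lambda:\rC^*_e(\M)\to B(\H_\lambda)$ and an isometry $V_\lambda:\F\to\H_\lambda$ with $P_\F\pi(a)|_\F=V_\lambda^*\sigma_\lambda(a)V_\lambda$ for all $a\in\N$. Using $\dim\H=\infty$, I would extend $V_\lambda$ to an isometry $W_\lambda:\H_\lambda\to\H$ with $W_\lambda V_\lambda=\iota_\F$, the inclusion $\F\hookrightarrow\H$, and set $\rho_\lambda=W_\lambda\sigma_\lambda(\cdot)W_\lambda^*$. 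Since $W_\lambda^*W_\lambda=I$, each $\rho_\lambda$ is a genuine (non-unital) $*$-representation with finite-dimensional range, and the relation $W_\lambda^*|_\F=V_\lambda$ gives $\langle\rho_\lambda(a)\xi,\eta\rangle=\langle\pi(a)\xi,\eta\rangle$ for all $a\in\N$ and $\xi,\eta\in\F$. Thus $\rho_\lambda\to\pi$ in the point-weak-$*$ topology on $\M$, indeed eventually with equality at each fixed point.

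The decisive step, and the one I expect to be the main obstacle, is upgrading this to convergence on all of $\rC^*_e(\M)$, since local finite-dimensionality only constrains behaviour on finite-dimensional subspaces of $\M$. Here I would use compactness together with the unique extension property. The maps $\rho_\lambda$ send the unit ball of $\rC^*_e(\M)$ into the weak-$*$ compact unit ball of $B(\H)$, so any point-weak-$*$ limit point $\Phi$ of the net is a completely positive contraction; since $1\in\M$ forces $\rho_\lambda(1)\to\pi(1)=I$, the map $\Phi$ is unital, and $\rho_\lambda|_\M\to\pi|_\M$ gives $\Phi|_\M=\pi|_\M$. The unique extension property then forces $\Phi=\pi$. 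As every convergent subnet has limit $\pi$, compactness shows the full net converges point-weak-$*$ to $\pi$ on all of $\rC^*_e(\M)$. Finally, because each $\rho_\lambda$ is a $*$-homomorphism, the identity $\rho_\lambda(a)^*\rho_\lambda(a)=\rho_\lambda(a^*a)$ converts weak-$*$ to norm convergence: expanding $\|\rho_\lambda(a)\xi-\pi(a)\xi\|^2$ and using $\rho_\lambda(a^*a)\to\pi(a)^*\pi(a)$ together with $\rho_\lambda(a)\to\pi(a)$ weakly makes the norm-squared tend to $0$. Hence $(\rho_\lambda)$ is a finite-dimensional approximation of $\pi$.

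For the concluding sentence, I would observe that boundary representations are irreducible $*$-representations with the unique extension property, so the statement just proved applies to each of them; this is exactly condition {\rm(ii)} of Theorem \ref{T:ELUEP}, which is equivalent to $\rC^*_e(\M)$ being RFD. In summary, local finite-dimensionality supplies approximants that match $\pi$ only on $\M$, the unique extension property promotes this partial agreement to genuine point-weak-$*$ convergence throughout $\rC^*_e(\M)$, and the $*$-homomorphism structure of the compressions then delivers the required strong convergence.
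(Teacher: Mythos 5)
Your proposal is correct and follows essentially the same route as the paper: compress $\pi$ to finite-dimensional subspaces indexed by pairs $(\N,\F)$, dilate via local finite-dimensionality, transplant back into $B(\H)$, pass to a point-weak-$*$ limit point, invoke the unique extension property to identify the limit with $\pi$, and use the $*$-homomorphism identity $\rho_\lambda(a^*a)=\rho_\lambda(a)^*\rho_\lambda(a)$ to upgrade to strong convergence. The only (harmless) difference is that you arrange $W_\lambda V_\lambda$ to be the inclusion $\F\hookrightarrow\H$ directly, whereas the paper conjugates by an auxiliary unitary $U_\lambda$ extending $W_\lambda V_\lambda$ to achieve the same compression identity.
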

\begin{proof}
Let $\beta:\rC_e^*(\M)\to B(\H_\beta)$ be a unital $*$-representation with the unique extension property with respect to $\M$. Without loss of generality, we may assume that $\H_\beta$ is infinite-dimensional, for otherwise the desired result is trivial. Let $\Lambda$ denote the collection of pairs $(\N,\X)$ where $\N\subset \M$ is a unital finite-dimensional subspace and $\X\subset \H_\beta$ is a finite-dimensional subspace. Then, $\Lambda$ is a directed set. Let $\lambda=(\N,\X)\in \Lambda$. The map 
\[
a\mapsto P_{\X}\beta(a)|_\X, \quad a\in \M
\]
is a matrix state on $\M$. By assumption, there is a finite-dimensional unital $*$-representation $\sigma_\lambda:\rC_e^*(\M)\to B(\K_\lambda)$ and an isometry $V_\lambda:\X\to \K_\lambda$ with the property that
\[
P_{\X}\beta(a)|_\X=V_\lambda^* \sigma_\lambda(a) V_\lambda, \quad a\in \N.
\]
Since $\H_\beta$ is assumed to be infinite-dimensional, there is an isometry $W_\lambda:\K_\lambda\to \H_\beta$. Define $\rho_\lambda:\rC_e^*(\M)\to B(\H_\beta)$ as
\[
\rho_\lambda(t)=W_\lambda \sigma_\lambda(t) W_\lambda^*, \quad t\in \rC_e^*(\M). 
\]
This is a $*$-representation and $\rho_\lambda(\rC_e^*(\M))\H_\beta\subset W_\lambda \K_\lambda$ so that $\rho_\lambda(\rC_e^*(\M))\H_\beta$ is finite-dimensional. Again, since $\H_\beta$ is assumed to be infinite-dimensional, we can extend the isometry $W_\lambda V_\lambda: \X\to \H_\beta$ to a unitary $U_\lambda$ on $\H_\beta$. Note that
\[
U_\lambda|_\X=W_\lambda V_\lambda \qand P_\X U_\lambda^*=V_\lambda^* W_\lambda^*
\]
so we find
\begin{equation}\label{Eq:dilation}
P_{\X}\beta(a)|_\X=P_{\X}U_\lambda^* \rho_\lambda(a) U_\lambda|_{\X}, \quad a\in \N.
\end{equation}

Next, for each $\lambda\in \Lambda$ we define a $*$-representation $\pi_\lambda:\rC_e^*(\M)\to B(\H_\beta)$ as
\[
\pi_\lambda(t)=U_\lambda^* \rho_\lambda(t) U_\lambda, \quad t\in \rC_e^*(\M).
\]
We find
\begin{align*}
\pi_\lambda(\rC_e^*(\M))\H_\beta&= U_\lambda^* \rho_\lambda(\rC_e^*(\M))\H_\beta
\end{align*}
which is again finite-dimensional.  Let  $\psi:\rC_e^*(\M)\to B(\H_\beta)$ be the limit of a subnet $(\pi_{\lambda_\mu})_\mu$ in the pointwise weak-$*$ topology; this exists by compactness \cite[Theorem 7.4]{paulsen2002}. In particular, $\psi$ is a completely positive map. 

Let $a\in\M$ and $\xi,\eta\in \H_\beta$. Let $\mu$ such that $\lambda_\mu=(\N,\X) \in \Lambda$ satisfies $a\in \N$ and $\xi,\eta\in \X$. Then, using Equation \eqref{Eq:dilation} we obtain
\begin{align*}
\langle \beta(a)\xi,\eta\rangle &=\langle P_\X \beta(a)|_\X \xi,\eta \rangle=\langle P_{\X}\pi_{\lambda_\mu}(a)|_{\X} \xi,\eta \rangle=\langle \pi_{\lambda_\mu}(a) \xi,\eta \rangle
\end{align*}
whence
\[
\langle \beta(a)\xi,\eta\rangle=\lim_\mu \langle \pi_{\lambda_\mu}(a) \xi,\eta \rangle=\langle \psi(a)\xi,\eta\rangle.
\]
Therefore, $\psi$ and $\beta$ agree on $\M$. By the unique extension property of $\beta$ with respect to $\M$, we conclude that 
\[
\beta(t)=\psi(t)=\lim_\mu\pi_{\lambda_\mu}(t), \quad t\in \rC_e^*(\M)
\]
where the limit exists in the weak-$*$ topology of $B(\H_\beta)$. By \cite[Paragraph 3.5.2]{dixmier1977}, we conclude that $(\pi_{\lambda_\mu})_\mu$ is a finite-dimensional approximation for $\beta$. 

Finally, we may invoke Theorem \ref{T:ELUEP} to see that $\rC^*_e(\M)$ is RFD. 
\end{proof}

Note that by  Theorem \ref{T:ELUEP}, in order to establish that $\rC^*_e(\A)$ is RFD above, it would have sufficed to produce a finite-dimensional approximation for an arbitrary boundary representation $\beta$. In other words, we could have assumed that $\beta$ was irreducible. Nevertheless, even then it is not clear that a matrix state of the form
\[
a\mapsto P_\X \beta(a)|_\X, \quad a\in \M
\]
would necessarily be pure. This explains why we need to assume that \emph{every} matrix state is locally finite-dimensional, as opposed to simply the pure ones, and emphasizes the relevance of Theorem \ref{T:fdimbdryfdimstates}.

Let us now summarize our findings related to $\rC^*$-liminality.

\begin{corollary}\label{C:C*lim}
Let $\M$ be a unital operator space. Consider the following statements.
\begin{enumerate}[{\rm (i)}]
\item The operator space $\M$ is $\rC^*$-liminal.
\item Every matrix state of $\M$ is locally finite-dimensional.
\item The algebra $\rC^*_e(\M)$ is RFD.
\end{enumerate}
Then, we have  {\rm (i)} $\Rightarrow$ {\rm (ii)} $\Rightarrow$ {\rm (iii)}. If $\M$ is in fact a unital operator algebra that satisfies {\rm (ii)}, then $\M$ must be NFD.
\end{corollary}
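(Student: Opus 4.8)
The plan is to verify the defining property of NFD directly: given $n \in \bN$ and $A \in \bM_n(\M)$, I must exhibit a finite-dimensional representation $\pi$ of $\M$ with $\|\pi^{(n)}(A)\| = \|A\|$. The guiding idea is that the restriction to $\M$ of a finite-dimensional unital $*$-representation of $\rC^*_e(\M)$ is exactly a finite-dimensional representation of $\M$ in the required sense, and that the hypothesis of local finite-dimensionality is precisely the tool needed to produce such a restriction attaining the prescribed norm.

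First I would invoke Theorem \ref{T:bdryexist} applied to $A$, which furnishes a boundary representation $\beta$ for $\M$ together with a finite-dimensional subspace $\F \subset \H_\beta$ such that the matrix state $\psi : \M \to B(\F)$ given by $\psi(b) = P_\F \beta(b)|_\F$ satisfies $\|\psi^{(n)}(A)\| = \|A\|$. Thus the norm of $A$ is already witnessed by a genuine matrix state on $\M$.

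Next I would let $\N \subset \M$ be the unital finite-dimensional subspace spanned by the identity together with the finitely many entries of $A$. Since $\M$ satisfies (ii), the matrix state $\psi$ is locally finite-dimensional, so applying this to $\N$ produces a finite-dimensional unital $*$-representation $\sigma : \rC^*_e(\M) \to B(\H_\sigma)$ and an isometry $V : \F \to \H_\sigma$ with $\psi(a) = V^* \sigma(a) V$ for every $a \in \N$. As all the entries of $A$ belong to $\N$, passing to $n$-fold ampliations yields $\psi^{(n)}(A) = (V^{(n)})^* \sigma^{(n)}(A) V^{(n)}$, where $V^{(n)} : \F^{(n)} \to \H_\sigma^{(n)}$ is the block-diagonal isometry. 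Because $V^{(n)}$ is an isometry, this gives $\|A\| = \|\psi^{(n)}(A)\| \leq \|\sigma^{(n)}(A)\|$; conversely, $\sigma$ is completely contractive and the embedding $\M \hookrightarrow \rC^*_e(\M)$ is completely isometric, so $\|\sigma^{(n)}(A)\| \leq \|A\|$. Hence $\|\sigma^{(n)}(A)\| = \|A\|$. Setting $\pi = \sigma|_\M$, which is a finite-dimensional representation of $\M$ since $\M$ is an operator algebra, we obtain $\|\pi^{(n)}(A)\| = \|A\|$, and as $n$ and $A$ were arbitrary, $\M$ is NFD.

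I expect the only genuinely delicate point to be the ampliation bookkeeping: one must be sure to gather all entries of $A$ into a single finite-dimensional unital subspace $\N$ so that one pair $(\sigma, V)$ serves every entry at once, and then confirm that compressing $\sigma^{(n)}(A)$ by the block-diagonal isometry $V^{(n)}$ reproduces $\psi^{(n)}(A)$ block by block. The remaining content is merely the sandwiching of two norm inequalities into an equality.
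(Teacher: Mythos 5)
Your argument for the final assertion --- that a unital operator algebra satisfying (ii) is NFD --- is correct and is essentially identical to the paper's own proof: invoke Theorem \ref{T:bdryexist} to obtain a matrix state $\psi$ witnessing $\|A\|$, gather the entries of $A$ (together with the unit) into a unital finite-dimensional subspace $\N$, apply local finite-dimensionality to get $\sigma$ and $V$, and sandwich the norms via the ampliated compression $\psi^{(n)}(A)=(V^{(n)})^*\sigma^{(n)}(A)V^{(n)}$. The ``delicate point'' you flag is handled exactly as the paper handles it.

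However, the proposal is incomplete: the corollary's principal claim is the chain of implications {\rm (i)} $\Rightarrow$ {\rm (ii)} $\Rightarrow$ {\rm (iii)}, and you do not address it at all. In the paper these implications are obtained by citing Theorem \ref{T:fdimbdryfdimstates} (every matrix state of a $\rC^*$-liminal unital operator space is locally finite-dimensional, giving {\rm (i)} $\Rightarrow$ {\rm (ii)}) and Theorem \ref{T:locfdim} (if every matrix state is locally finite-dimensional then $\rC^*_e(\M)$ is RFD, giving {\rm (ii)} $\Rightarrow$ {\rm (iii)}). Since both results are already established earlier in the paper, the fix is a one-line citation, but as written your proof establishes only the last sentence of the statement and omits its main content.
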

\begin{proof}
{\rm (i)} $\Rightarrow$ {\rm (ii)} $\Rightarrow$ {\rm (iii)}: This follows by combining Theorems \ref{T:fdimbdryfdimstates} and \ref{T:locfdim}.

Assume now that $\M$ is a unital operator algebra and every matrix state of $\M$ is locally finite-dimensional. Let $n\in \bN$ and let $A\in \bM_n(\M)$. Write $A=[a_{ij}]$ and let $\N\subset \M$ be the unital finite-dimensional subspace generated by $\{a_{ij}:1\leq i,j\leq n\}$.
By Theorem \ref{T:bdryexist}, there is a matrix state $\psi:\M\to B(\F)$ such that $\|\psi^{(n)}(A)\|=\|A\|$. By assumption, we infer the existence of a finite-dimensional unital $*$-representation $\pi: \rC_e^*(\M) \to B(\H_\pi)$ and of an isometry $V:\F\to \H_\pi$ such that
\[
\psi(b)=V^* \pi(b)V, \quad b\in \N.
\]
In particular, we see that
\[
\psi^{(n)}(A)=V^{*(n)}\pi^{(n)}(A)V^{(n)}
\]
whence $\|\pi^{(n)}(A)\|=\|A\|$. We conclude that $\M$ is indeed NFD.
\end{proof}

By choosing $\M$ to be a unital $\rC^*$-algebra which is RFD but not liminal, we see that (iii) $\not\Rightarrow$ (i) and (iii) $\not\Rightarrow$ (ii) above. Furthermore, Example \ref{E:Cuntz} shows that (ii) can fail for NFD unital operator algebras. As mentioned before, we do not know if (ii) $\Rightarrow$ (i). In \cite{HL2019}, this is shown to hold in the self-adjoint context by using that (i) is equivalent to being NFD for $\rC^*$-algebras, by virtue of Theorem \ref{T:CS}. Unfortunately, that equivalence fails in our generality.

\bibliography{/Users/Raphael/Dropbox/Research/Bibliography/biblio_main}
%\bibliography{/home/clouatrr/Dropbox/Research/Bibliography/biblio_main}
\bibliographystyle{plain}

%\bibliography{biblio_main_cbshilov}

\end{document}